\newtheorem{theorem}{Theorem}[section]
\newtheorem{proposition}[theorem]{Proposition}
\newtheorem{lemma}[theorem]{Lemma}
\newtheorem{definition}[theorem]{Definition}
\newtheorem{corollary}[theorem]{Corollary}
\newtheorem{remark}[theorem]{Remark}
\newtheorem{notation}[theorem]{Notation}
\newenvironment{proof}[1][Proof]{\textbf{#1.} }{\ \rule{0.5em}{0.5em}}
\begin{document}

\title{Stein's lemma, Malliavin calculus, and tail bounds, with application to
polymer fluctuation exponent}
\author{Frederi G. Viens$^{1,}$\thanks{Author's reserach partially supported by NSF
grant 0606615}$\vspace*{0.1in}$\\$^{1}$Dept. Statistics and Dept. Mathematics\\Purdue University\vspace{0.02in}\\150 N. University St.\\West Lafayette, IN 47907-2067, USA;\\viens@purdue.edu}
\maketitle

\begin{abstract}
We consider a random variable $X$ satisfying almost-sure conditions involving
$G:=\left\langle DX,-DL^{-1}X\right\rangle $ where $DX$ is $X$'s Malliavin
derivative and $L^{-1}$ is the inverse Ornstein-Uhlenbeck operator. A lower-
(resp. upper-) bound condition on $G$ is proved to imply a Gaussian-type lower
(resp. upper) bound on the tail $\mathbf{P}\left[  X>z\right]  $. Bounds of
other natures are also given. A key ingredient is the use of Stein's lemma,
including the explicit form of the solution of Stein's equation relative to
the function $\mathbf{1}_{x>z}$, and its relation to $G$. Another set of
comparable results is established, without the use of Stein's lemma, using
instead a formula for the density of a random variable based on $G$, recently
devised by the author and Ivan Nourdin. As an application, via a Mehler-type
formula for $G$, we show that the Brownian polymer in a Gaussian environment
which is white-noise in time and positively correlated in space has deviations
of Gaussian type and a fluctuation exponent $\chi=1/2$. We also show this
exponent remains $1/2$ after a non-linear transformation of the polymer's Hamiltonian.

\end{abstract}

\thispagestyle{myheadings}

\vspace{0.2in}

\textbf{Key words and phrases:} Malliavin calculus, Wiener chaos,
sub-Gaussian, Stein's lemma, polymer, Anderson model, random media,
fluctuation exponent.

\textbf{AMS 2000 MSC codes: }primary 60H07; secondary 60G15, 60K37, 82D60

\section{Introduction}

\subsection{Background and context}

Ivan Nourdin and Giovanni Peccati have recently made a long-awaited connection
between Stein's lemma and the Malliavin calculus: see \cite{NP}, and also
\cite{NPberry}. Our article uses crucial basic elements from their work, to
investigate the behavior of square-integrable random variables whose Wiener
chaos expansions are not finite. Specifically we devise conditions under which
the tail of a random variable is bounded below by Gaussian tails, by using
Stein's lemma and the Malliavin calculus. Our article also derives similar
lower bounds by way of a new formula for the density of a random variable,
established in \cite{NV}, which uses Malliavin calculus, but not Stein's
lemma. Tail upper bounds are also derived, using both methods.

Stein's lemma has been used in the past for Gaussian upper bounds, e.g. in
\cite{ChatExch} in the context of exchangeable pairs. Malliavin derivatives
have been invoked for similar upper bounds in \cite{VVJFA}. In the current
paper, the combination of these two tools yields a novel criterion for a
Gaussian tail lower bound. We borrow a main idea from Nourdin and Peccati
\cite{NP}, and also from \cite{NV}: to understand a random variable $Z$ which
is measurable with respect to a Gaussian field $W$, it is fruitful to consider
the random variable
\[
G:={{\langle DZ,-DL^{-1}Z\rangle,}}%
\]
{{where }}$D$ is the Malliavin derivative relative to $W$, $\left\langle
\cdot,\cdot\right\rangle $ is the inner product in the canonical Hilbert space
$H$ of $W$, and $L$ is the Ornstein-Uhlenbeck operator. Details on $D$, $H$,
$L$, and $G$, will be given below.

The function $g\left(  z\right)  =\mathbf{E}\left[  G|Z=z\right]  $ has
already been used to good effect in the density formula discovered in
\cite{NV}; this formula implied new lower bounds on the densities of some
Gaussian processes' suprema. The article \cite{NPV}, in preparation, contains
some very sharp Gaussian supremum density formulas, also based on $g$. These
results are made possible by fully using the Gaussian property, and in
particular by exploiting both upper and lower bounds on the process's
covariance. The authors of \cite{NV} noted that, if $Z$ has a density and an
upper bound is assumed on $G$, in the absence of any other assumption on how
$Z$ is related to the underlying Gaussian process $W$, then $Z$'s tail is
sub-Gaussian. On the other hand, the authors of \cite{NV} tried to discard any
upper bound assumption, and assume instead that $G$ was bounded below, to see
if they could derive a Gaussian lower bound on $Z$'s tail; they succeeded in
this task, but only partially, as they had to impose some additional
conditions on $Z$'s function $g$, which are of upper-bound type, and which may
not be easy to verify in practice.

The techniques used in \cite{NV} are well adapted to studying densities of
random variables under simultaneous lower and upper bound assumptions, but
less so under single-sided assumptions. The point of the current paper is to
show that, while the quantitative study of densities via the Malliavin
calculus seems to require two-sided assumptions as in \cite{NPV} and
\cite{NV}, single-sided assumptions on $G$ are in essence sufficient to obtain
single sided bounds on tails of random variables, and there are two strategies
to this end: Nourdin and Peccati's connection between Malliavin calculus and
Stein's lemma, and exploiting the Malliavin-calculus-based density formula in
\cite{NV}.

The key new component in our work, relative to the first strategy, may be
characterized by saying that, in addition to a systematic exploitation of the
Stein-lemma--Malliavin-calculus connection (via Lemma \ref{lemkey} below), we
carefully analyze the behavior of solutions of the so-called Stein equation,
and use them profitably, rather than simply use the fact that there exist
bounded solutions with bounded derivatives. The novelty in our second strategy
is simply to note that the difficulties inherent to using the density formula
of \cite{NV} with only one-sided assumptions, tend to dissappear when one
passes to tail formulas.\bigskip

Our work follows in the footsteps of Nourdin and Peccati's. One major
difference between our work and their's, and indeed between ours and the main
use of Stein's method since its inception in \cite{Stein72} to the most recent
results (see \cite{ChatAoP}, \cite{ChenShao}, \cite{RinottRotar}, and
references therein) is that Stein's method is typically concerned with
convergence to the normal distribution while we are only interested in rough
bounds of Gaussian or other types for single random variables (not sequences),
without imposing conditions which would lead to normal or any other
convergence. As an exception to this statement, \cite{NP} implies that a bound
on the variance of a single $G$ has clear implications for the distance from
$Z$'s distribution to the normal law [see for instance Remark 3.6 therein];
Nourdin and Peccati in \cite{NP} did not make systematic use of this result,
because their motivations did not require it.

One other difference between our work and theirs is that we do not consider
the case of a single Wiener chaos. This last specificity of our work (see
however Remark 3.8 in \cite{NP}), that we systematically consider random
variables with infinitely many non-zero Wiener chaos components, comes from
the application which we also consider in this article, to the so-called
\emph{fluctuation exponent} $\chi$ of a polymer in a random environment.
Details on this application, where we show that $\chi=1/2$ for a certain class
of environments, are in Section \ref{FLUCTU}. There is a more fundamental
obstacle to seeking upper or lower Gaussian tail bounds on an r.v. in a
\emph{single} Wiener chaos: unlike convergence results for \emph{sequences} of
r.v.'s, such as \cite{NOL}, a single $q$th chaos r.v. has a tail of order
$\exp\left(  -\left(  x/c\right)  ^{2/q}\right)  $ (see \cite{Bo}), it never
has a Gaussian behavior; our lower-bound results below (e.g. Theorem
\ref{thm12} Point 3) does apply to such an r.v., but the result cannot be sharp.

\subsection{Summary of results}

We now describe our main theoretical results. All stochastic analytic concepts
used in this introduction are described in Section \ref{PRELIM}. Let $W$ be an
isonormal Gaussian process relative to a Hilbert space $H=L^{2}\left(
T,\mathcal{B},\mu\right)  $ (for instance if $W$ is the Wiener process on
$[0,1]$, then $T=[0,1]$ and $\mu$ is the Lebesgue measure). The norm and inner
products in $H$ are denoted by $\left\Vert \cdot\right\Vert $ and
$\left\langle \cdot;\cdot\right\rangle $. Let $L^{2}\left(  \Omega\right)  $
be the set of all random variables which are square-integrable and measurable
with respect to $W$. Let $D$ be the Malliavin derivative with respect to $W$
(see Paul Malliavin's or David Nualart's texts \cite{M}, \cite{Nbook}). Thus
$DX$ is a random element in $L^{2}\left(  \Omega\right)  $ with values in the
Hilbert space $H$. The set of all $X\in L^{2}\left(  \Omega\right)  $ such
that $\left\Vert DX\right\Vert \in L^{2}\left(  \Omega\right)  $ is called
$\mathbf{D}^{1,2}$. Let $\bar{\Phi}$ be the tail of the standard normal
distribution
\[
\bar{\Phi}\left(  u\right)  :=\int_{u}^{\infty}e^{-x^{2}/2}dx/\sqrt{2\pi}.
\]
The following result, described in \cite{VVJFA} as an elementary consequence
of a classical stochastic analytic inequality found for instance in
\"{U}st\"{u}nel's textbook \cite[Theorem 9.1.1]{U}, makes use of a condition
based solely on the Malliavin derivative of a given r.v. to guarantee that its
tail is bounded above by a Gaussian tail.

\begin{proposition}
\label{fundu}For any $X\in\mathbf{D}^{1,2}$, if $\left\Vert DX\right\Vert $ is
bounded almost surely by $1$, then $X$ is a standard sub-Gaussian random
variable, in the sense that $\mathbf{P}\left[  \left\vert X-\mathbf{E}\left[
X\right]  \right\vert >u\right]  \leq2e^{-u^{2}/2}$.
\end{proposition}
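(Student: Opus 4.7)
The plan is to establish the sub-Gaussian moment generating function bound $\mathbf{E}[e^{\lambda(X-\mathbf{E}[X])}] \le e^{\lambda^2/2}$ for every $\lambda \in \mathbf{R}$ and then conclude via Chernoff's inequality; the MGF bound itself will come from applying the Herbst argument to Gross's Gaussian logarithmic Sobolev inequality on Wiener space, which is exactly the kind of ``classical stochastic analytic inequality'' covered in \cite{U}.

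First I would center, replacing $X$ by $X-\mathbf{E}[X]$; the hypothesis $\|DX\|\le 1$ a.s.\ is unaffected, so assume $\mathbf{E}[X]=0$. The Gaussian log-Sobolev inequality asserts that for every $F\in \mathbf{D}^{1,2}$ with $F>0$,
\[
\mathbf{E}\bigl[F^2 \log F^2\bigr] - \mathbf{E}[F^2]\log \mathbf{E}[F^2] \le 2\, \mathbf{E}\bigl[\|DF\|^2\bigr].
\]
Apply this with $F=e^{\lambda X/2}$: the Malliavin chain rule gives $DF=(\lambda/2)F\, DX$, so $\|DF\|^2\le (\lambda^2/4)e^{\lambda X}$ by hypothesis. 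Setting $H(\lambda):=\mathbf{E}[e^{\lambda X}]$, the inequality becomes
\[
\lambda H'(\lambda) - H(\lambda)\log H(\lambda) \le \tfrac{\lambda^2}{2}\, H(\lambda).
\]

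Next, I would divide by $\lambda^2 H(\lambda)$ to recognize the left side as $\frac{d}{d\lambda}\bigl[\lambda^{-1}\log H(\lambda)\bigr]$. Integrating the resulting differential inequality from $0^+$ to $\lambda>0$, using $\lim_{\lambda\to 0^+}\lambda^{-1}\log H(\lambda)=\mathbf{E}[X]=0$, gives $\log H(\lambda)\le \lambda^2/2$; the symmetric argument applied to $-X$ covers $\lambda<0$. Chernoff then yields $\mathbf{P}[X>u]\le \inf_{\lambda>0} e^{-\lambda u}H(\lambda) \le e^{-u^2/2}$, and combining with the analogous bound on $\mathbf{P}[-X>u]$ produces the factor $2$.

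The main technical obstacle is that $F=e^{\lambda X/2}$ need not a priori lie in $\mathbf{D}^{1,2}$, which is mildly circular since Gaussian integrability of $X$ is precisely what the Herbst argument is supposed to deliver. I would address this by a standard truncation: replace $X$ by the $1$-Lipschitz approximation $X_N:=((-N)\vee X)\wedge N$, which still satisfies $X_N\in\mathbf{D}^{1,2}$ with $\|DX_N\|\le 1$ a.s., and for which $e^{\lambda X_N/2}$ is bounded. The argument above then yields $\mathbf{E}[e^{\lambda X_N}]\le \exp(\lambda^2/2+\lambda \mathbf{E}[X_N])$ uniformly in $N$; passing to the limit via Fatou's lemma (and noting $\mathbf{E}[X_N]\to 0$) recovers the bound for $X$ and hence the claimed tail estimate.
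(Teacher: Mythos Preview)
Your argument is correct, but it takes a genuinely different route from the paper's own proof. The paper works on the standard Wiener space and uses the Clark--Ocone formula to represent the centered $X$ as the terminal value of a martingale $M_t=\int_0^t \mathbf{E}[D_sX\mid\mathcal{F}_s]\,dW_s$; the hypothesis $\|DX\|\le 1$ forces $\langle M\rangle_1\le 1$, and then the Dol\'eans-Dade exponential $\mathcal{E}(\lambda M)$ being a mean-one martingale gives $\mathbf{E}[e^{\lambda X}]\le e^{\lambda^2/2}$ directly, after which Chernoff finishes the job. For the general isonormal case the paper simply cites \cite[Theorem~9.1.1]{U}. Your approach via the Gaussian log-Sobolev inequality and the Herbst argument is equally classical and arrives at the same MGF bound; its advantage is that it works uniformly over any abstract Wiener space without needing a filtration or the Clark--Ocone representation, at the cost of importing Gross's inequality and handling the mild circularity (integrability of $e^{\lambda X}$) through the truncation $X_N$. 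The paper's martingale route avoids that truncation entirely on the standard Wiener space, since boundedness of the quadratic variation makes the exponential martingale argument immediate.
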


\begin{remark}
The value $1$ in this proposition, and indeed in many places in this paper,
has the role of a dispersion coefficient. Since the Malliavin derivative $D$
is linear, the above proposition implies that for any $X\in\mathbf{D}^{1,2}$
such that $\left\Vert DX\right\Vert \leq\sigma$ almost surely, then
$\mathbf{P}\left[  \left\vert X-\mathbf{E}\left[  X\right]  \right\vert
>u\right]  \leq2e^{-u^{2}/(2\sigma^{2})}$. This trivial normalization argument
can be used throughout this paper, because our hypotheses are always based on
linear operators such as $D$. We use this argument in our application in
Section \ref{FLUCTU}.
\end{remark}

The question of whether a lower bound on $\left\Vert DX\right\Vert ^{2}$ gives
rise to an inequality in the opposite direction as in the above proposition
arises naturally. However, we were unable to find any proof of such a result.
Instead, after reading Eulalia Nualart's article \cite{Ncras} where she finds
a class of lower bounds by considering exponential moments on the divergence
(Skorohod integral) of a covering vector field of $X$, we were inspired to
look for other Malliavin calculus operations on $X$ which would yield a
Gaussian lower bound on $X$'s tail. We turned to the quantity $G=\left\langle
DX;-DL^{-1}X\right\rangle $, identified in \cite{NP}, and used profitably in
\cite{NPV} and \cite{NV}. Here $L^{-1}$, the inverse of the so-called
\emph{Ornstein-Uhlenbeck} operator, is defined in Section \ref{PRELIM}. This
article's first theoretical result is that a lower (resp. upper) bound on $G$
can yield a lower (resp. upper) bound similar to the upper bound in
Proposition \ref{fundu}. For instance, summarizing the combination of some
consequences of our results and Proposition \ref{fundu}, we have the following.

\begin{theorem}
\label{thm12}Let $X$ be a random variable in $\mathbf{D}^{1,2}$. Let
$G:=\left\langle DX;-DL^{-1}X\right\rangle $.

\begin{enumerate}
\item If $G\geq1$ almost surely, then%
\[
Var\left[  X\right]  \geq K_{u}:=\frac{1}{\pi^{2}}\left(  2\sqrt{1+2\sqrt
{2\pi}}-1\right)  ^{2}\simeq0.21367.
\]

\item If $G\geq1$ almost surely, and if for some $c>2$, $\mathbf{E}\left[
X^{c}\right]  <\infty$, then%
\begin{equation}
\limsup_{z\rightarrow\infty}\mathbf{P}\left[  X-\mathbf{E}\left[  X\right]
>z\right]  /\bar{\Phi}\left(  z\right)  \geq\frac{c-2}{c}.\label{supergauss}%
\end{equation}

\item If $G\geq1$ almost surely, and if there exist $c^{\prime}<1$ and
$z_{0}>0$, such that and $G\leq c^{\prime}X^{2}$ almost surely when $X\geq
z_{0}$, then for $z>z_{0}$,%
\[
\mathbf{P}\left[  X-\mathbf{E}\left[  X\right]  >z\right]  \geq\frac
{1}{2c^{\prime}+1}\bar{\Phi}\left(  z\right)
\]

\item If $G\leq1$ almost surely, then for every $z>0$%
\begin{equation}
\mathbf{P}\left[  X-\mathbf{E}\left[  X\right]  >z\right]  \leq\left(
1+\frac{1}{z^{2}}\right)  \bar{\Phi}\left(  z\right)  .\label{subgauss2}%
\end{equation}

\item If $\left\Vert DX\right\Vert ^{2}\leq1$ almost surely, then $Var\left[
X\right]  \leq\left(  \pi/2\right)  ^{2}$ and for $z>0$,%
\begin{equation}
\mathbf{P}\left[  X-\mathbf{E}\left[  X\right]  >z\right]  \leq e^{-z^{2}%
/2}\label{subgauss}%
\end{equation}

\end{enumerate}
\end{theorem}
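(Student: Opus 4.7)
My plan is to base everything on a single ``master equation'' obtained by combining the Nourdin--Peccati identity $\mathbf{E}[\tilde X\, f(\tilde X)] = \mathbf{E}[G\, f'(\tilde X)]$, for sufficiently smooth $f$ (essentially the content of Lemma~\ref{lemkey}), where $\tilde X := X - \mathbf{E}[X]$, with the explicit bounded solution $f_z$ of the Stein equation $f_z'(y) - y f_z(y) = \mathbf{1}_{y>z} - \bar\Phi(z)$. Taking the expectation of Stein's equation at $y = \tilde X$ and substituting the identity will yield the master equation
\[
\mathbf{P}[\tilde X > z] - \bar\Phi(z) = \mathbf{E}\bigl[f_z'(\tilde X)(1 - G)\bigr],
\]
from which all five points will be read off under the appropriate hypothesis on $G$. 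A preliminary step will be to record the closed form of $f_z$, namely $f_z(x) = -\sqrt{2\pi}\, e^{x^2/2}\bar\Phi(z)\Phi(x)$ on $\{x \le z\}$ and $f_z(x) = -\sqrt{2\pi}\, e^{x^2/2}\Phi(z)\bar\Phi(x)$ on $\{x \ge z\}$, together with the Mills-ratio sign analysis showing $f_z' < 0$ on $\{x < z\}$, $f_z' > 0$ on $\{x > z\}$, and explicit uniform bounds of orders $\bar\Phi(z)$, respectively $\Phi(z)/(1+z^2)$, on $|f_z'|$ in each region.

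Point~4 (upper bound, $G \le 1$) will come most easily: since $1 - G \ge 0$, the contribution from $\{\tilde X < z\}$ to the right-hand side of the master equation is nonpositive, and the $\{\tilde X \ge z\}$ piece is controlled by $(\Phi(z)/(1+z^2))\,\mathbf{E}[(1-G)\mathbf{1}_{\tilde X \ge z}]$, yielding~\eqref{subgauss2}. For Points~2 and~3 ($G \ge 1$), the master equation rewrites as $\bar\Phi(z) - \mathbf{P}[\tilde X > z] = \mathbf{E}[(-f_z'(\tilde X))(G-1)]$, in which the $\{\tilde X \ge z\}$ portion of the right-hand side has the ``wrong'' (nonpositive) sign; discarding it produces the upper bound
\[
\bar\Phi(z) - \mathbf{P}[\tilde X > z] \le \mathbf{E}\bigl[|f_z'(\tilde X)|\,(G-1)\,\mathbf{1}_{\tilde X < z}\bigr].
\]
For Point~2, a H\"older argument using the moment hypothesis $\mathbf{E}[X^c] < \infty$ together with the polynomial bounds on $|f_z'|$ for $x<z$ will force this quantity to be $o(\bar\Phi(z))$ along some sequence $z_k \to \infty$, and optimizing the Markov exponent will produce the factor $(c-2)/c$; for Point~3, the additional a.s.\ hypothesis $G \le c' X^2$ on $\{X \ge z_0\}$ will replace the moment step by a direct pointwise estimate and yield the uniform constant $1/(2c'+1)$.

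Point~1 will then be extracted by combining Point~2 with a Chebyshev-type inequality $Var[X] \ge z^2\,\mathbf{P}[|\tilde X| > z]$ and optimizing jointly in $z$ and the exponent $c > 2$; the explicit constant $K_u = \pi^{-2}(2\sqrt{1+2\sqrt{2\pi}}-1)^2$ is precisely the value of this two-parameter optimum (which is why it takes such an awkward algebraic form). Point~5 is independent of the $G$-machinery and is a one-sided refinement of Proposition~\ref{fundu}: the exponential-martingale argument underlying that proposition will be run separately on $X$ and on $-X$ to produce~\eqref{subgauss}, and the variance bound $Var[X] \le (\pi/2)^2$ will follow either by tail integration of~\eqref{subgauss} or by a Poincar\'e-type estimate in the $G$-framework.

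I expect the sign-cancellation on $\{\tilde X \ge z\}$ under $G \ge 1$ (Points~2--3) to be the main obstacle. Discarding the wrong-signed integrand there is ``free'', but the \emph{magnitude} of the $\{\tilde X < z\}$ piece must then be controlled explicitly, because any looseness propagates into the final constants and is precisely what forces the \emph{limsup} (rather than pointwise) formulation of~\eqref{supergauss}. The moment hypothesis $\mathbf{E}[X^c] < \infty$ in Point~2 and the stronger a.s.\ hypothesis $G \le c' X^2$ in Point~3 are what make this control tractable, and carrying the Mills-ratio estimates on $f_z'$ through with their exact constants is where most of the real work will lie.
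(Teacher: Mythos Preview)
Your master-equation framework $\mathbf{P}[\tilde X > z] - \bar\Phi(z) = \mathbf{E}[f_z'(\tilde X)(1-G)]$ is correct and is precisely how the paper proceeds (Corollary~\ref{Steincor} combined with Lemma~\ref{lemkey}). But there are three concrete gaps.

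\textbf{Sign/region error in Points~2--3.} Negating the master equation gives $\bar\Phi(z) - \mathbf{P}[\tilde X > z] = \mathbf{E}[f_z'(\tilde X)(G-1)]$, not $\mathbf{E}[(-f_z')(\tilde X)(G-1)]$. With your own sign conventions ($f_z' \le 0$ on $x<z$, $f_z' \ge 0$ on $x>z$) and $G-1 \ge 0$, the \emph{nonnegative} contribution is on $\{\tilde X > z\}$; discarding the nonpositive $\{\tilde X < z\}$ piece yields
\[
\bar\Phi(z) - \mathbf{P}[\tilde X > z] \le \mathbf{E}\bigl[f_z'(\tilde X)(G-1)\,\mathbf{1}_{\tilde X > z}\bigr].
\]
So the error term to control lives on $\{\tilde X > z\}$, not on $\{\tilde X < z\}$. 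This matters: on $\{x>z\}$ one has the uniform bound $|f_z'(x)| \le (1+z^2)^{-1}$, whereas on $\{x<z\}$ the supremum of $|f_z'|$ is of order $1$ (attained near $x=z$), so your ``polynomial bounds on $|f_z'|$ for $x<z$'' cannot produce the required smallness.

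\textbf{H\"older is the wrong tool.} Even after correcting the region, the remaining factor $\mathbf{E}[(G-1)\mathbf{1}_{\tilde X > z}]$ involves $G$, and the hypothesis $\mathbf{E}[X^c] < \infty$ gives no control on moments of $G$. The paper's device is to \emph{reapply} Lemma~\ref{lemkey} with $h(x) = (x-z)\mathbf{1}_{x>z}$, converting $\mathbf{E}[G\,\mathbf{1}_{\tilde X > z}]$ back into $\mathbf{E}[\tilde X(\tilde X - z)\mathbf{1}_{\tilde X > z}]$, a pure moment of $X$. Integration by parts then gives the inequality of Theorem~\ref{thmlb}, and Point~2 follows by estimating the resulting tail integral under $S(x) < x^{-c}$ (Corollary~\ref{thmlbcor}, Point~3). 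Point~3 of Theorem~\ref{thm12} is obtained the same way but using the pointwise bound $G \le c' X^2$ inside $\mathbf{E}[G\,\mathbf{1}_{\tilde X > z}]$ (Corollary~\ref{corlblast}); your description is close in spirit but skips this conversion step, without which the argument does not close.

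\textbf{Point~1 does not follow from Point~2 plus Chebyshev.} Point~2 is only a $\limsup$, requires the extra moment hypothesis $\mathbf{E}[X^c] < \infty$ absent from Point~1, and in any case $z^2\bar\Phi(z) \to 0$, so $Var[X] \ge z^2\mathbf{P}[|\tilde X|>z]$ combined with~\eqref{supergauss} along $z_k \to \infty$ yields nothing. The paper instead integrates the inequality of Theorem~\ref{thmlb} in $z$ over $[0,\infty)$, obtaining $a_1 := \mathbf{E}[\tilde X_+] \ge (2\pi)^{-1/2} - \pi a_2$ with $a_2 := \mathbf{E}[\tilde X^2\mathbf{1}_{\tilde X > 0}]$, and closes with Cauchy--Schwarz $a_2 \ge a_1^2$. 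The constant $K_u$ arises from solving the resulting quadratic in $a_2$, not from an optimum in $(z,c)$.

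Points~4 and~5 of your plan are essentially correct and match the paper; for Point~4 you will additionally need the projective positivity $\mathbf{E}[G \mid X] \ge 0$ from~\cite{NP} to bound $\mathbf{E}[(1-G)\mathbf{1}_{\tilde X > z}] \le \mathbf{P}[\tilde X > z]$.
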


\begin{remark}
Point 1 in this theorem is Corollary \ref{thmlbcor} Point 1. Point 2 here
comes from Corollary \ref{thmlbcor} Point 3. Point 3 here follows from
Corollary \ref{corA} Point 1. Point 4 is from Theorem \ref{thmlb}. Inequality
(\ref{subgauss}) in Point 5 here is equivalent to Proposition \ref{fundu}. The
variance upper bound in Point 5 here follows from \cite[Theorem 9.2.3 part
(iii)]{U}. Other, non-Gaussian comparisons are also obtained in this article:
see Corollary \ref{corA}.
\end{remark}

The results in Theorem \ref{thm12} point to basic properties of the Malliavin
derivative and Ornstein-Uhlenbeck operator when investigating tail behavior of
random variables. The importance of the relation of $G$ to the value $1$ was
already noticed in \cite[Theorem 3.1]{NP} where its $L^{2}$-convergence to $1$
for a sequence of r.v.'s was a basic building block for convergence to the
standard normal distribution. Here we show what can still be asserted when the
condition is significantly relaxed. An attempt was made to prove a version of
the theorem above in \cite[Section 4]{NV}; here we significantly improve that
work by: (i) removing the unwieldy upper bound conditions made in
\cite[Theorem 4.2]{NV} to prove lower bound results therein; and (ii)
improving the upper bound in \cite[Theorem 4.1]{NV} while using a weaker
hypothesis.\bigskip

Our results should have applications in any area of pure or applied
probability where Malliavin derivatives are readily expressed. In fact,
Nourdin and Peccati \cite[Remark 1.4, point 4]{NP} already hint that $G$ is
not always as intractable as one may fear. We present such an application in
this article, in which the deviations of random polymer models in some random
media are estimated, and its \emph{fluctuation exponent} is calculated to be
$\chi=1/2$, a result which we prove to be robust to non-linear changes in the
polymer's Hamiltonian.\bigskip

The structure of this article is as follows. Section \ref{PRELIM} presents all
necessary background information from the theory of Wiener chaos and the
Malliavin calculus needed to understand our statements and proofs. Section
\ref{TOOLS} recalls Stein's lemma and equation, presents the way it will be
used in this article, and recalls the density representation results from
\cite{NV}. Section \ref{MAIN} states and proves our main lower and upper bound
results. Section \ref{FLUCTU} gives a construction of continuous random
polymers in Gaussian environments, and states and proves the estimates on its
deviations and its fluctuation exponent under Gaussian and non-Gaussian
Hamiltonians, when the Gaussian environment has infinite-range correlations.
Several interesting open questions are described in this section as well.
Section 6, the Appendix, contains the proofs of some lemmas.\bigskip

\begin{description}
\item[Acknowledgements] We wish to thank Ivan Nourdin and Giovanni Peccati for
discussing their work on Stein's method with us, Eulalia Nualart for
encouraging us to study the question of lower bounds on tails of random
variables via the Malliavin Calculus, and Samy Tindel for help with the
concept of polymer fluctuation exponents.
\end{description}

\section{Preliminaries: Wiener chaos and Malliavin calculus\label{PRELIM}}

For a complete treatment of this topic, we refer the reader to David Nualart's
textbook \cite{Nbook}.

We use an abstract Wiener space given by an \emph{isonormal Gaussian process
}$W$: it is defined as a Gaussian field $W$ on a Hilbert space $H=L^{2}\left(
T,\mathcal{B},\mu\right)  $ where $\mu$ is a $\sigma$-finite measure that is
either discrete or without atoms, and the covariance of $W$ coincides with the
inner product in $H$. This forces $W$ to be linear on $H$; consequently, it
can be interpreted as an abstract Wiener integral. For instance, if $T=[0,1]$
and $\mu$ is the Lebesgue measure, then $W\left(  f\right)  $ represents the
usual Wiener stochastic integral $\int_{0}^{1}f\left(  s\right)  dW\left(
s\right)  $ of a square-integrable non-random function $f$ with respect to a
Wiener process also denoted by $W$; i.e. we confuse the notation $W\left(
t\right)  $ and $W\left(  \mathbf{1}_{[0,t]}\right)  $. In general for
$\{f_{i}:i=1,\cdots,n\}\in H^{n}$, $(W\left(  f_{i}\right)  :i=1,\cdots,n)$ is
a centered Gaussian vector, with covariance matrix given by $\sigma_{i,j}%
^{2}=\left\langle f_{i};f_{j}\right\rangle $. The set $\mathcal{H}_{1}$ of all
Wiener integrals $W\left(  f\right)  $ when $f$ ranges over all of $H$ is
called the first Wiener chaos of $W$. To construct higher-order chaoses, one
may for example use iterated It\^{o} integration in the case of standard
Brownian motion, where $H=L^{2}\left[  0,1\right]  $. If we denote
$I_{0}\left(  f\right)  =f$ for any non-random constant $f$, then for any
integer $n\geq1$ and any symmetric function $f\in H^{n}$, we let%
\[
I_{n}\left(  f\right)  :=n!\int_{0}^{1}\int_{0}^{s_{1}}\cdots\int_{0}%
^{s_{n-1}}f\left(  s_{1},s_{2},\cdots,s_{n}\right)  dW\left(  s_{n}\right)
\cdots dW\left(  s_{2}\right)  dW\left(  s_{1}\right)  .
\]
This is the $n$th iterated Wiener integral of $f$ w.r.t. $W$.

\begin{definition}
The set $\mathcal{H}_{n}:=\left\{  I_{n}\left(  f\right)  :f\in H^{n}\right\}
$ is the $n$th Wiener chaos of $W$.
\end{definition}

We refer to \cite[Section 1.2]{Nbook} for the general definition of $I_{n}$
and $\mathcal{H}_{n}$ when $W$ is a more general isonormal Gaussian process.

\begin{proposition}
\label{orthog}$L^{2}\left(  \Omega\right)  $ is the direct sum -- with respect
to the inner product defined by expectations of products of r.v.'s -- of all
the Wiener chaoses. Specifically for any $X\in L^{2}\left(  \Omega\right)  $,
there exists a sequence of non-random symmetric functions $f_{n}\in H^{n}$
with $\sum_{n=0}^{\infty}\left\Vert f_{n}\right\Vert _{H^{n}}^{2}<\infty$ such
that $X=\sum_{n=0}^{\infty}I_{n}\left(  f_{n}\right)  $. Moreover
$\mathbf{E}\left[  X\right]  =f_{0}=I_{0}\left(  f_{0}\right)  $ and
$\mathbf{E}\left[  I_{n}\left(  f_{n}\right)  \right]  =0$ for all $n\geq1$,
and $\mathbf{E}\left[  I_{n}\left(  f_{n}\right)  I_{m}\left(  g_{m}\right)
\right]  =\delta_{m,n}n!\left\langle f_{n},g_{n}\right\rangle _{H^{n}}$ where
$\delta_{m,n}$ equals $0$ if $m\neq n$ and $1$ if $m=n$. In particular
$\mathbf{E}\left[  X^{2}\right]  =\sum_{n=0}^{\infty}n!\left\Vert
f_{n}\right\Vert _{H^{n}}^{2}$.
\end{proposition}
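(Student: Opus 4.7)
The plan is to break the proof into two classical pieces: (i) the orthogonality relation $\mathbf{E}[I_n(f_n)I_m(g_m)] = \delta_{m,n} n! \langle f_n,g_n\rangle_{H^n}$, which gives that $\mathcal{H}_n\perp\mathcal{H}_m$ for $n\neq m$ and establishes the Itô isometry on each chaos; and (ii) the completeness statement that $\overline{\bigoplus_{n\ge 0}\mathcal{H}_n}=L^2(\Omega,\sigma(W))$. Granting these, the decomposition $X=\sum_n I_n(f_n)$ with $f_n\in H^n$ symmetric is simply the expansion of $X$ along the orthogonal decomposition, the identification $\mathbf{E}[X]=f_0$ follows because $\mathbf{E}[I_n(\cdot)]=0$ for $n\ge 1$ (a consequence of the isometry applied with $m=0$), and the Parseval identity $\mathbf{E}[X^2]=\sum_n n!\|f_n\|^2$ is immediate from (i). The functions $f_n$ may be chosen symmetric because by construction $I_n$ only sees the symmetrization of its argument.

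For (i), in the Brownian case where $I_n(f)$ is defined by iterated Itô integration, the orthogonality and isometry follow by induction on $n$ using the Itô isometry and the fact that the innermost integral is centered: the cross term $\mathbf{E}[I_n(f)I_m(g)]$ with $n>m$ reduces by repeated application of $\mathbf{E}[\int_0^t \phi_s\,dW_s]=0$ to $0$, while $\mathbf{E}[I_n(f)I_n(g)]$ telescopes via the Itô isometry, picking up the factor $n!$ from the normalization of the simplex to the cube. For a general isonormal Gaussian process, one obtains the same relations by first defining $I_n$ on symmetric elementary tensors $h^{\otimes n}$ via Hermite polynomials, $I_n(h^{\otimes n})=\|h\|^n H_n(W(h)/\|h\|)$, and computing moments of Hermite polynomials of jointly Gaussian variables; one then extends by linearity and continuity (this is the construction in \cite{Nbook}, which we may quote).

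The main obstacle is (ii), the density statement. My approach is the classical exponential-vector argument. Define the Wick exponential $\mathcal{E}(h):=\exp\bigl(W(h)-\tfrac12\|h\|^2\bigr)$ for $h\in H$. By a monotone class / Fourier argument, the linear span of $\{\mathcal{E}(h):h\in H\}$ is dense in $L^2(\Omega,\sigma(W))$: any $X\in L^2$ orthogonal to every $\mathcal{E}(h)$ must have vanishing Laplace transform along every finite-dimensional Gaussian subspace $W(h_1),\dots,W(h_k)$, hence $X=0$ by the injectivity of the Laplace transform for finite-dimensional Gaussian laws together with the fact that cylinder functions are dense in $L^2(\Omega,\sigma(W))$. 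Next, using the generating function for Hermite polynomials,
\[
\mathcal{E}(h)=\sum_{n=0}^\infty \frac{1}{n!}\,I_n(h^{\otimes n}),
\]
with convergence in $L^2(\Omega)$ (by the isometry and $\sum_n \|h\|^{2n}/n!<\infty$), we see each $\mathcal{E}(h)$ lies in $\overline{\bigoplus_n\mathcal{H}_n}$; combined with the density of exponential vectors, this yields (ii).

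Putting the pieces together, given $X\in L^2(\Omega)$ let $f_n\in H^{\odot n}$ be the symmetric kernel representing the orthogonal projection of $X$ onto $\mathcal{H}_n$ (which exists since $I_n$ is an isometric isomorphism from $H^{\odot n}$ equipped with $\sqrt{n!}\|\cdot\|$ onto $\mathcal{H}_n$ by (i)); then $X=\sum_n I_n(f_n)$ in $L^2$ by (ii), the summability $\sum_n n!\|f_n\|^2<\infty$ is Parseval, and the remaining moment identities in the statement follow directly. The expected difficulty lies entirely in the exponential-vector density step; the rest is a bookkeeping consequence of the isometry.
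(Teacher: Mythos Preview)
Your argument is correct and is the standard textbook proof (Hermite-polynomial isometry plus density of Wick exponentials). Note, however, that the paper does not actually prove this proposition: it is stated in the Preliminaries section as a classical result, with the reader referred to Nualart's textbook \cite{Nbook} for a complete treatment, so there is no paper-proof to compare against beyond observing that your approach is precisely the one found in that reference.
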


The Malliavin derivative operator is usually constructed via an extension
starting from so-called simple random variables which are differentiable
functions of finite-dimensional vectors from the Gaussian space $\mathcal{H}%
_{1}$. The reader can consult Nualart's textbook \cite{Nbook}. We recall the
properties which are of use to us herein.

\begin{enumerate}
\item The Malliavin derivative operator $D$ is defined from $\mathcal{H}_{1}$
into $H$ by the formula: for all $r\in T$,%
\[
D_{r}W\left(  f\right)  =f\left(  r\right)  .
\]
The Malliavin derivative of a non-random constant is zero. For any
$m$-dimensional Gaussian vector $G=\left(  G_{i}\right)  _{i=1}^{m}=\left(
I_{1}\left(  g_{i}\right)  \right)  _{i=1}^{m}\in\left(  \mathcal{H}%
_{1}\right)  ^{m}$, for any $F\in C^{1}\left(  \mathbf{R}^{m}\right)  $ such
that $X=F\left(  G\right)  \in L^{2}\left(  \Omega\right)  $, we have
$D_{r}X=\sum_{i=1}^{m}\frac{\partial F}{\partial x_{i}}\left(  G\right)
g_{i}\left(  r\right)  $.

\item The Malliavin derivative of an $n$th Wiener chaos r.v. is particularly
simple. Let $X_{n}\in\mathcal{H}_{n}$, i.e. let $f_{n}$ be a symmetric
function in $H^{n}$ and $X_{n}=I_{n}\left(  f_{n}\right)  $. Then%
\begin{equation}
D_{r}X=D_{r}I_{n}\left(  f_{n}\right)  =nI_{n-1}\left(  f_{n}\left(
r,\cdot\right)  \right)  .\label{DInfn}%
\end{equation}
The Malliavin derivative being linear, this extends immediately to any random
variable $X$ in $L^{2}\left(  \Omega\right)  $ by writing $X$ as its Wiener
chaos expansion $\sum_{n=0}^{\infty}I_{n}\left(  f_{n}\right)  $, which means
that, using the covariance formulas in Proposition \ref{orthog}, $DX\in
L^{2}\left(  \Omega\times T\right)  $ if and only if%
\begin{equation}
\mathbf{E}\left[  \left\Vert DX\right\Vert ^{2}\right]  :=\sum_{n=1}^{\infty
}n\ n!\left\Vert f_{n}\right\Vert ^{2}<\infty.\label{D12}%
\end{equation}
The set of all $X\in L^{2}\left(  \Omega\right)  $ such that $DX\in
L^{2}\left(  \Omega\times T\right)  $ is denoted by $\mathbf{D}^{1,2}$.
\end{enumerate}

\begin{remark}
\label{remgenchain}The general chain rule of point 1 above generalizes to
$D\left[  h\left(  X\right)  \right]  =h^{\prime}\left(  X\right)  DX$ for any
$X\in\mathbf{D}^{1,2}$ such that $X$ has a density, and any function $h$ which
is continuous and piecewise differentiable with a bounded derivative. This is
an immediate consequence of \cite[Proposition 1.2.3]{Nbook}.
\end{remark}

In the special case of the standard Wiener space ($H=L^{2}[0,1]$) we have the
Clark-Ocone representation formula \cite[Proposition 1.3.5]{Nbook}.

\begin{proposition}
For any $X\in\mathbf{D}^{1,2}$,%
\begin{equation}
X=\mathbf{E}\left[  X\right]  +\int_{0}^{1}\mathbf{E}\left[  D_{s}%
X|\mathcal{F}_{s}\right]  dW\left(  s\right)  . \label{clarkocone}%
\end{equation}

\end{proposition}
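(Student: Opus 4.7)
The plan is to prove the formula chaos-by-chaos and then pass to the limit using the orthogonality and the norm identity~(\ref{D12}). Concretely, by Proposition~\ref{orthog}, I would write $X = \mathbf{E}[X] + \sum_{n\ge 1} I_n(f_n)$ with symmetric $f_n \in H^n$ satisfying $\sum_n n\,n!\,\|f_n\|^2 < \infty$, and reduce the claim to showing, for each fixed $n\ge 1$, the identity
\[
I_n(f_n) \;=\; \int_0^1 \mathbf{E}\bigl[D_s I_n(f_n) \,\big|\, \mathcal{F}_s\bigr]\, dW(s).
\]

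For a single chaos, the computation is explicit. By the definition of the iterated It\^o integral on $[0,1]$, I can write
\[
I_n(f_n) \;=\; \int_0^1 \psi(s_1)\, dW(s_1), \qquad \psi(s_1) := n!\int_0^{s_1}\!\!\cdots\int_0^{s_{n-1}} f_n(s_1,\ldots,s_n)\, dW(s_n)\cdots dW(s_2).
\]
Comparing with the iterated-integral representation of $I_{n-1}(f_n(s_1,\cdot)\mathbf{1}_{[0,s_1]^{n-1}})$, one gets $\psi(s_1) = n\, I_{n-1}(f_n(s_1,\cdot)\mathbf{1}_{[0,s_1]^{n-1}})$. On the other hand, formula~(\ref{DInfn}) gives $D_{s_1} I_n(f_n) = n\, I_{n-1}(f_n(s_1,\cdot))$, and conditioning on $\mathcal{F}_{s_1}$ kills the contributions of $f_n(s_1,\cdot)$ supported outside $[0,s_1]^{n-1}$ (since iterated Wiener integrals against increments in $(s_1,1]$ have zero conditional mean given $\mathcal{F}_{s_1}$). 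Hence $\mathbf{E}[D_{s_1} I_n(f_n)\mid \mathcal{F}_{s_1}] = n\,I_{n-1}(f_n(s_1,\cdot)\mathbf{1}_{[0,s_1]^{n-1}}) = \psi(s_1)$, which yields the single-chaos identity.

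To conclude, I would sum the chaos identities and justify the exchange of summation and stochastic integration via the It\^o isometry: the partial sums $S_N = \mathbf{E}[X] + \sum_{n=1}^N I_n(f_n)$ converge to $X$ in $L^2(\Omega)$ by Proposition~\ref{orthog}, while $\sum_{n=1}^N \mathbf{E}[D_s I_n(f_n)\mid \mathcal{F}_s]$ converges to $\mathbf{E}[D_s X\mid \mathcal{F}_s]$ in $L^2(\Omega\times[0,1])$ because conditional expectation is an $L^2$-contraction and $DX = \sum_n D I_n(f_n)$ converges in $L^2(\Omega\times[0,1])$ by~(\ref{D12}). Applying It\^o's isometry to the difference of stochastic integrals then closes the argument.

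The main obstacle I anticipate is the single-chaos bookkeeping: carefully matching the combinatorial factor $n$ coming from $D_s I_n(f_n)$ with the factor $n!/(n-1)! = n$ coming from the outermost-variable position in the iterated It\^o integral, and verifying that conditioning on $\mathcal{F}_s$ exactly truncates the inner integrals to $[0,s]^{n-1}$. Everything else --- the chaos decomposition and the $L^2$ limit passage --- is routine once~(\ref{D12}) and the It\^o isometry are in hand.
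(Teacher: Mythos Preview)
The paper does not prove this proposition at all: it is stated as a known result and attributed to \cite[Proposition~1.3.5]{Nbook}. Your chaos-by-chaos argument is correct and is essentially the standard proof one finds in the cited reference, so there is nothing to compare against within the paper itself. The bookkeeping you flag as the main obstacle---matching the factor $n$ from $D_sI_n(f_n)$ with the outermost iterated integral and verifying that $\mathbf{E}[I_{n-1}(g)\mid\mathcal{F}_s]=I_{n-1}(g\,\mathbf{1}_{[0,s]^{n-1}})$---is indeed the only substantive step, and you have identified it correctly.
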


\section{Tools: using Stein's lemma and Malliavin derivatives\label{TOOLS}}

\subsection{Stein's lemma and equation\label{STEIN}}

The version of Stein's lemma which we use can be found in \cite{NP}. Let $Z$
be a standard normal random variable and $\bar{\Phi}\left(  z\right)
=\mathbf{P}\left[  Z>z\right]  $ its tail. Let $h$ be a measurable function of
one real variable. Stein's equation poses the following question: to find a
continuous and piecewise differentiable function $f$ such that for all
$x\in\mathbf{R}$ where $f^{\prime}$ exists,%
\begin{equation}
h\left(  x\right)  -\mathbf{E}\left[  h\left(  Z\right)  \right]  =f^{\prime
}\left(  x\right)  -xf\left(  x\right)  .\label{stein}%
\end{equation}
The precise form of the solution to this differential equation, given in the
next lemma, was derived in Stein's original work \cite{Stein72}; a recent
usage is found in equalities (1.5), (2,20), and (2.21) in \cite{NPberry}.

\begin{lemma}
\label{Steinlem}Fix $z\in\mathbf{R}$. Let $h=1_{(-\infty,z]}$. Then Stein's
equation (\ref{stein}) has at least one solution $f$ satisfying $\left\Vert
f^{\prime}\right\Vert _{\infty}:=\sup_{x\in\mathbf{R}}\left\vert f^{\prime
}\left(  x\right)  \right\vert \leq1$. One such solution is the following:

\begin{itemize}
\item for $x\leq z$, $f\left(  x\right)  =\sqrt{2\pi}e^{x^{2}/2}\left(
1-\bar{\Phi}\left(  x\right)  \right)  \bar{\Phi}\left(  z\right)  ,$

\item for $x>z$, $f\left(  x\right)  =\sqrt{2\pi}e^{x^{2}/2}\left(
1-\bar{\Phi}\left(  z\right)  \right)  \bar{\Phi}\left(  x\right)  $.
\end{itemize}
\end{lemma}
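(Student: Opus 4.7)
The plan is to verify the two defining properties of the stated candidate $f$: (i) that $f$ is continuous and satisfies Stein's equation (\ref{stein}) on each of $(-\infty,z)$ and $(z,\infty)$, and (ii) that $\|f'\|_\infty\le 1$.

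For (i), I would first compute $\mathbf{E}[h(Z)]=\mathbf{P}[Z\le z]=1-\bar\Phi(z)$, so (\ref{stein}) becomes the first-order linear ODE $f'(x)-xf(x)=\mathbf{1}_{x\le z}-(1-\bar\Phi(z))$. Multiplying by the integrating factor $e^{-x^2/2}$ yields
$$\frac{d}{dx}\bigl[e^{-x^2/2}f(x)\bigr]=e^{-x^2/2}\bigl[\mathbf{1}_{x\le z}-(1-\bar\Phi(z))\bigr].$$
On $\{x\le z\}$ I would integrate from $-\infty$ (the integrand reduces to $\bar\Phi(z)\,e^{-y^2/2}$ there); on $\{x>z\}$ I would integrate from $+\infty$ (the integrand reduces to $-(1-\bar\Phi(z))e^{-y^2/2}$). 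The two integrals evaluate in closed form to the two branches stated in the lemma, and continuity at $x=z$ is automatic because each branch reduces to $\sqrt{2\pi}\,e^{z^2/2}\bar\Phi(z)(1-\bar\Phi(z))$. Equivalently, one can simply differentiate the stated formula and verify Stein's equation branch by branch.

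For (ii), substituting the explicit form of $f$ back into Stein's equation produces the compact expressions
$$f'(x)=\bar\Phi(z)\bigl[\,1+x\sqrt{2\pi}\,e^{x^2/2}(1-\bar\Phi(x))\,\bigr]\qquad(x\le z),$$
$$f'(x)=-\bigl(1-\bar\Phi(z)\bigr)\bigl[\,1-x\sqrt{2\pi}\,e^{x^2/2}\bar\Phi(x)\,\bigr]\qquad(x>z).$$
The bound $|f'|\le 1$ then reduces to the classical Mills inequality $0\le y\sqrt{2\pi}\,e^{y^2/2}\bar\Phi(y)\le 1$ for $y\ge 0$ (equivalent to $\bar\Phi(y)\le e^{-y^2/2}/(y\sqrt{2\pi})$), its symmetric counterpart $-1\le y\sqrt{2\pi}\,e^{y^2/2}(1-\bar\Phi(y))\le 0$ for $y\le 0$, and the trivial estimates $\bar\Phi,\,1-\bar\Phi\in[0,1]$.

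The main obstacle is the case analysis for (ii) when $x$ and $z$ have the same sign: the plain Mills bound is not by itself tight enough in the subregions $0\le x\le z$ and $z<x<0$. In the first subregion one must combine the comparison $e^{x^2/2}\le e^{z^2/2}$ \emph{together with} $1-\bar\Phi(x)\le 1-\bar\Phi(z)$, so that the large exponential is absorbed against the small prefactor $\bar\Phi(z)$ via Mills applied at $z$; a symmetric calibration handles the second subregion after the substitution $(x,z)\mapsto(-x,-z)$. Once all sign-subcases are checked, the two explicit formulas for $f'$ combine to yield the uniform bound $\|f'\|_\infty\le 1$.
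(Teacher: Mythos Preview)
Your proof is correct. Note, however, that the paper does not actually prove Lemma~\ref{Steinlem}: it cites the explicit solution as originating in Stein's work \cite{Stein72} and appearing in \cite{NPberry}, and leaves the bound $\|f'\|_\infty\le 1$ as part of that classical package. So there is no in-paper proof to compare against.

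That said, your computations dovetail nicely with what the paper does later. The expressions you obtain for $f'$ on the two branches are exactly those recorded at the start of Step~1 in the proof of Theorem~\ref{thmlb}, and the Mills-type inequality you invoke is the paper's ``standard estimate'' (\ref{vieuxtruc}). The paper uses these formulas only to establish the \emph{signs} of $f'$ (nonnegative on $\{x\le z\}$, nonpositive on $\{x>z\}$ when $z>0$) and the sharper bound $|f'(\xi)|\le (1+\xi^2)^{-1}$ for $\xi>z>0$; it never spells out the uniform bound $|f'|\le 1$. Your treatment of the delicate subregions $0\le x\le z$ and $z<x<0$ --- bounding $x$, $e^{x^2/2}$, and $1-\bar\Phi(x)$ separately by their values at $z$ so that Mills at $z$ absorbs the product against the prefactor $\bar\Phi(z)$ --- is the standard device and is exactly what is needed; this argument is not in the paper since the lemma is taken for granted.
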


\begin{corollary}
\label{Steincor}Let $X\in L^{2}\left(  \Omega\right)  $. Setting $x=X$ in
Stein's equation (\ref{stein}) and taking expectations we get%
\[
\mathbf{P}\left[  X>z\right]  =\bar{\Phi}\left(  z\right)  -\mathbf{E}\left[
f^{\prime}\left(  X\right)  \right]  +\mathbf{E}\left[  Xf\left(  X\right)
\right]  .
\]

\end{corollary}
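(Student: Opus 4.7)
The plan is essentially a direct substitution, and the only genuine work is checking integrability. The approach is to take Stein's equation (\ref{stein}) for $h = \mathbf{1}_{(-\infty,z]}$, substitute the random variable $X$ in place of the real variable $x$, take expectations of both sides, and rearrange into a tail probability. Since Lemma \ref{Steinlem} already delivers a piecewise-differentiable solution $f$, the algebraic manipulation is immediate once the two expectations on the right-hand side are known to exist.

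First I would dispose of integrability. Lemma \ref{Steinlem} gives $\Vert f'\Vert_\infty \leq 1$, so $\mathbf{E}[f'(X)]$ is automatically defined and bounded by $1$ in absolute value, independently of the law of $X$. For $\mathbf{E}[Xf(X)]$, I would read off the explicit expressions in Lemma \ref{Steinlem}: on each of the two branches $f(x)$ is a product of the Gaussian exponential $e^{x^2/2}$ with a normal tail $\bar{\Phi}(\cdot)$ evaluated at a point whose absolute value is at least $|x|$. The classical Mills-ratio estimate $\bar{\Phi}(x) \sim e^{-x^2/2}/(x\sqrt{2\pi})$ as $x \to +\infty$, together with $1-\bar{\Phi}(x) \sim e^{-x^2/2}/(|x|\sqrt{2\pi})$ as $x\to -\infty$, then shows that both $f$ and the product $x \mapsto xf(x)$ are bounded on $\mathbf{R}$. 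Consequently $\mathbf{E}[Xf(X)]$ is finite as soon as $X \in L^1(\Omega)$, which is implied by $X \in L^2(\Omega)$.

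With integrability in hand, I would substitute $X$ into (\ref{stein}), noting that $h(X) = \mathbf{1}_{\{X \leq z\}}$ and that $\mathbf{E}[h(Z)] = \mathbf{P}[Z \leq z] = 1 - \bar{\Phi}(z)$. Taking expectation of the almost-sure identity
\[
\mathbf{1}_{\{X \leq z\}} - \bigl(1 - \bar{\Phi}(z)\bigr) = f'(X) - Xf(X)
\]
yields $\mathbf{P}[X \leq z] = 1 - \bar{\Phi}(z) + \mathbf{E}[f'(X)] - \mathbf{E}[Xf(X)]$, and passing to the complementary event produces the stated formula for $\mathbf{P}[X > z]$. There is no real obstacle here; the Mills-ratio bookkeeping above is the only step that merits explicit mention.
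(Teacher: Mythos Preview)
Your proof is correct and follows exactly the route the paper intends: the paper gives no separate proof of this corollary, treating it as an immediate consequence of substituting $x=X$ into Stein's equation and taking expectations. Your additional care in verifying that $f'$ is bounded and that $x\mapsto xf(x)$ is bounded (via the Mills-ratio asymptotics) simply makes explicit the integrability checks the paper leaves implicit.
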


The next section gives tools which will allow us to combine this corollary
with estimates of the random variable $G=\left\langle DX;-DL^{-1}%
X\right\rangle $ in order to get tail bounds. It will also show that $G$ can
be used, as in \cite{NV}, to express the density of $X$ without using Stein's lemma.

\subsection{Malliavin derivative tools\label{PaulTools}}

\begin{definition}
The Ornstein-Uhlenbeck operator $L$ is defined as follows. Let $X=\sum
_{n=1}^{\infty}I_{n}\left(  f_{n}\right)  $ be a centered r.v. in
$L^{2}\left(  \Omega\right)  $. If $\sum_{n=1}^{\infty}n^{2}n!\left\vert
f_{n}\right\vert ^{2}<\infty$, then we define a new random variable $LX$ in
$L^{2}\left(  \Omega\right)  $ by $-LX=\sum_{n=1}^{\infty}nI_{n}\left(
f_{n}\right)  $. The inverse of $L$ operating on centered r.v.'s in
$L^{2}\left(  \Omega\right)  $ is defined by the formula $-L^{-1}X=\sum
_{n=1}^{\infty}\frac{1}{n}I_{n}\left(  f_{n}\right)  .$
\end{definition}

The following formula will play an important role in our proofs where we use
Stein's lemma. It was originally noted in \cite{NP}. We provide a
self-contained proof of this result in the Appendix, which does not use the
concept of divergence operator (Skorohod integral).

\begin{lemma}
\label{lemkey}For any centered $X\in\mathbf{D}^{1,2}$ with a density and any
deterministic continuous and piecewise differentiable function $h$ such that
$h^{\prime}$ is bounded,%
\begin{equation}
\mathbf{E}\left[  Xh\left(  X\right)  \right]  =\mathbf{E}\left[  h^{\prime
}\left(  X\right)  \left\langle DX;-DL^{-1}X\right\rangle \right]  .
\label{easykey}%
\end{equation}

\end{lemma}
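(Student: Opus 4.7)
The strategy is to bypass the divergence (Skorohod) operator entirely by expressing $-L^{-1}$ as the integrated Ornstein--Uhlenbeck semigroup and reducing (\ref{easykey}) to classical finite-dimensional Gaussian integration by parts. Recall that $P_{t} := e^{tL}$ is multiplication by $e^{-nt}$ on $\mathcal{H}_{n}$, so for every centered $X$ one has $-L^{-1}X = \int_{0}^{\infty} P_{t} X\,dt$; commuting $D$ with Mehler's formula will produce the analogous representation $-DL^{-1}X = \int_{0}^{\infty} e^{-t} P_{t}(DX)\,dt$, where $P_{t}$ acts componentwise on the $H$-valued r.v.\ $DX$.

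I would first establish the identity for a smooth cylindrical $X = F(\xi_{1}, \dots, \xi_{m})$, with $(\xi_{i})$ orthonormal in $\mathcal{H}_{1}$, $F$ smooth with derivatives of polynomial growth, and $h$ smooth with $h'$ bounded. Here Mehler reads $P_{t} X = \mathbf{E}'[F(e^{-t}\xi + \sqrt{1-e^{-2t}}\,\xi')]$ with $\xi'$ an independent copy of $\xi$; differentiating gives $\partial_{i} P_{t} F = e^{-t} P_{t} \partial_{i} F$, and the generator on cylindrical functionals is $L = \Delta - \xi\cdot\nabla$. Since $X$ is centered, the fundamental theorem of calculus applied to the semigroup gives $X = -\int_{0}^{\infty} L P_{t} X\,dt$. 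Multiplying by $h(X)$, taking expectation, and applying the elementary identity $\mathbf{E}[\xi_{i} \Psi(\xi)] = \mathbf{E}[\partial_{i} \Psi(\xi)]$ to $\Psi(\xi) = \partial_{i} P_{t} F(\xi)\,h(F(\xi))$ makes the $\Delta$-terms cancel and yields
\begin{equation*}
\mathbf{E}[Xh(X)] \;=\; \int_{0}^{\infty} \mathbf{E}\bigl[h'(X)\,\nabla P_{t} F(\xi)\cdot \nabla F(\xi)\bigr]\,dt \;=\; \int_{0}^{\infty} e^{-t}\,\mathbf{E}\bigl[h'(X)\,\langle DX, P_{t}(DX)\rangle\bigr]\,dt,
\end{equation*}
which is exactly $\mathbf{E}[h'(X) G]$ after interchanging the $t$-integration with the expectation.

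Extension to the full statement proceeds in two approximation steps. Smooth cylindrical r.v.'s are dense in $\mathbf{D}^{1,2}$, and the operators $D$ and $-L^{-1}$ are continuous on the centered part of $\mathbf{D}^{1,2}$, so choosing cylindrical $X_{n} \to X$ in $\mathbf{D}^{1,2}$ gives $G_{n} \to G$ in $L^{1}$ and $h(X_{n}) \to h(X)$ in $L^{2}$ (using the Lipschitz bound $\Vert h'\Vert_{\infty}$), allowing both sides of (\ref{easykey}) to pass to the limit for smooth $h$. To drop smoothness on $h$, mollify to $h_{\epsilon}$ smooth with $\Vert h'_{\epsilon}\Vert_{\infty} \leq \Vert h'\Vert_{\infty}$ and $h'_{\epsilon} \to h'$ Lebesgue-a.e.; the hypothesis that $X$ has a density ensures $h'_{\epsilon}(X) \to h'(X)$ almost surely, and dominated convergence (with dominant $\Vert h'\Vert_{\infty} |G| \in L^{1}$ on the right, and the linear bound on $h_{\epsilon}$ on the left) concludes.

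The main obstacle I expect is the bookkeeping required to justify Fubini on $\Omega \times [0,\infty)$ and the $L^{1}$-convergence $G_{n} \to G$ in the density step. One has to check that $G \in L^{1}$ --- this follows from Cauchy--Schwarz since $DX$ and $DL^{-1}X$ both lie in $L^{2}(\Omega;H)$ when $X \in \mathbf{D}^{1,2}$ --- and that $e^{-t} h'(X)\langle DX, P_{t}(DX)\rangle$ is absolutely integrable on $\Omega \times [0,\infty)$, which is controlled by $\Vert h'\Vert_{\infty}$, Cauchy--Schwarz, and the $L^{2}$-contractivity $\Vert P_{t}(DX)\Vert_{L^{2}(\Omega;H)} \leq \Vert DX\Vert_{L^{2}(\Omega;H)}$ (itself a consequence of Jensen applied to Mehler's formula). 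These estimates are routine but are the one place where care is genuinely needed.
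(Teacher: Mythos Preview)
Your argument is correct but follows a genuinely different route from the paper. The paper works directly with the Wiener chaos expansion $X=\sum_{n\geq 1}I_{n}(f_{n})$: it first proves the single-chaos identity $\mathbf{E}[I_{n}(f_{n})Y]=n^{-1}\mathbf{E}[\langle DI_{n}(f_{n}),DY\rangle]$ by a bare-hands isometry computation (Lemma~\ref{lemsko}), then sums over $n$ with $Y=h(X)$, applies the chain rule $Dh(X)=h'(X)DX$ (this is the sole place the density hypothesis enters, via Remark~\ref{remgenchain}), and recognises $\sum_{n}n^{-1}DI_{n}(f_{n})$ as $-DL^{-1}X$. No approximation of $X$ or of $h$ is required; the chaos expansion absorbs everything at once.

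Your semigroup approach trades the chaos decomposition for Mehler's formula plus finite-dimensional Gaussian integration by parts, followed by two density steps. This costs more bookkeeping (the Fubini and $L^{1}$-convergence checks you flag are indeed the only delicate points, and your sketch handles them correctly), but it buys a proof that never writes a chaos expansion and makes the link with the Mehler representation of $-DL^{-1}$ explicit---exactly the device the paper later exploits in Lemma~\ref{lemDLX}. The paper's proof is shorter and rests only on the covariance formula in Proposition~\ref{orthog}; yours is more analytic and arguably more portable to situations where the chaos structure is less transparent.
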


On the other hand, the next result and its proof (see \cite{NV}), make no
reference to Stein's lemma. Let the function $g$ be defined almost everywhere
by%
\begin{equation}
g\left(  z\right)  :=\mathbf{E}[\left\langle DX;-DL^{-1}X\right\rangle
|X=z].\label{gee}%
\end{equation}

\begin{proposition}
\label{nounouvivi}Let $X\in\mathbf{D}^{1,2}$ be centered with a density $\rho$
which is supported on a set $I$. Then $I$ is an interval $[a,b]$ and, with $g$
as above, we have for almost all $z\in(a,b)$,%
\[
\rho\left(  z\right)  =\frac{\mathbf{E}\left\vert Z\right\vert }{2g\left(
z\right)  }\exp\left(  -\int_{0}^{z}\frac{ydy}{g\left(  y\right)  }\right)  .
\]

\end{proposition}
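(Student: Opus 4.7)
The plan is to turn Lemma \ref{lemkey} into a first-order ODE for $\rho$. Applied to any $C^1$ test function $h$ with bounded derivative, and after conditioning the right-hand side on $X$ and invoking the definition of $g$, Lemma \ref{lemkey} yields $\mathbf{E}[Xh(X)] = \mathbf{E}[h'(X)g(X)]$. Writing both sides as integrals against $\rho$ and varying $h$ over a dense class of test functions, this says that $z\mapsto g(z)\rho(z)$ admits a weak derivative on $\mathbf{R}$ equal to $-z\rho(z)$. Integrating from $-\infty$ to $z$, and using that $X$ is centered so that the total integral of $y\rho(y)$ vanishes, produces the fundamental identity
\[
g(z)\rho(z) \;=\; -\int_{-\infty}^{z} y\,\rho(y)\,dy \;=\; \mathbf{E}[X\,\mathbf{1}_{X>z}]\qquad\text{a.e.\ in }\mathbf{R}.
\]

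Next I would use this representation to pin down the support. If $\rho$ vanished on some open subinterval $(z_1,z_2)$ strictly inside the convex hull $[a,b]$ of the essential support of $\rho$, the identity above would force $\mathbf{E}[X\,\mathbf{1}_{X>z_2}] = 0$; however, for $z_2\ge 0$ the integrand $y\rho(y)$ on $(z_2,b)$ is nonnegative with positive mass, making this expectation strictly positive, a contradiction. The case $z_2<0$ is handled symmetrically by rewriting the tail expectation as $-\mathbf{E}[X\,\mathbf{1}_{X\le z_1}]$ using centering. Hence the support is exactly $[a,b]$, $\rho>0$ a.e.\ on $(a,b)$, and the fundamental identity also gives $g>0$ a.e.\ there.

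With $g\rho>0$ on $(a,b)$, the weak ODE $(g\rho)'(z) = -z\rho(z)$ is equivalent to $(\log g\rho)'(z) = -z/g(z)$, and integration from $0$ to $z$ gives
\[
g(z)\rho(z) \;=\; g(0)\rho(0)\,\exp\!\left(-\int_0^z\frac{y\,dy}{g(y)}\right).
\]
The multiplicative constant is identified by evaluating the fundamental identity at $z=0$: $g(0)\rho(0) = \mathbf{E}[X\,\mathbf{1}_{X>0}] = \tfrac{1}{2}\mathbf{E}|X|$, the last equality because $X$ is centered. Dividing by $g(z)$ produces the claimed formula.

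The main technical obstacle is the rigorous passage from Lemma \ref{lemkey} to the distributional ODE in an honest almost-everywhere form: the lemma as stated only handles $C^1$ test functions with bounded derivatives, so one must extend the identity $\mathbf{E}[Xh(X)]=\mathbf{E}[h'(X)g(X)]$ to indicators $h=\mathbf{1}_{(-\infty,z]}$, equivalently verify that the primitive $z\mapsto g(z)\rho(z)$ admits an absolutely continuous representative. This is accomplished by mollification, using $\mathbf{E}\|DX\|^2<\infty$ (i.e.\ $X\in\mathbf{D}^{1,2}$) together with Cauchy--Schwarz to control $g$ in the limit. Once the a.e.\ ODE is in hand, the remainder is elementary calculus modulo the support argument above.
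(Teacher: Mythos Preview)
Your argument is correct and is essentially the one the paper defers to: the paper does not prove this proposition but cites \cite{NV} for the density formula and \cite[Proposition 2.1.7]{Nbook} for the support statement, noting that the needed modifications are straightforward. Your derivation of the key identity $g(z)\rho(z)=\mathbf{E}[X\mathbf{1}_{X>z}]$ from Lemma~\ref{lemkey}, followed by integrating the resulting first-order ODE and identifying the constant via $\mathbf{E}[X\mathbf{1}_{X>0}]=\tfrac12\mathbf{E}|X|$, is exactly the Nourdin--Viens route.

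The one genuine difference is the support claim. The paper simply invokes the general Malliavin-calculus fact that any $X\in\mathbf{D}^{1,2}$ possessing a density has support equal to a closed interval (Nualart's Proposition 2.1.7), which is independent of $g$. You instead give a self-contained argument from the identity $g\rho=\mathbf{E}[X\mathbf{1}_{X>\cdot}]$: if $\rho$ vanished on a gap inside its convex hull, the absolutely continuous representative of $g\rho$ would force a tail expectation to be zero, contradicting strict positivity. Your route is more elementary and stays entirely within the machinery already set up in the paper; the paper's route is a one-line citation but imports an outside result. Both are valid; yours has the minor advantage of making explicit why $g>0$ a.e.\ on $(a,b)$, which the paper only remarks on after the fact. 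Your acknowledged technical point---upgrading Lemma~\ref{lemkey} from piecewise-$C^1$ test functions to the distributional ODE via mollification and the bound $\mathbf{E}[G]=\mathbf{E}[X^2]<\infty$---is the right place to be careful, and your sketch of how to close it is adequate.
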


Strictly speaking, the proof of this proposition is not contained in
\cite{NV}, since the authors there use the additional assumption that
$g\left(  x\right)  \geq1$ everywhere, which implies that $\rho$ exists\ and
that $I=\mathbf{R}$. However, the modification of their arguments to yield the
proposition above is straightforward, and we omit it: for instance, that $I$
is an interval follows from $X\in\mathbf{D}^{1,2}$ as seen in
\cite[Proposition 2.1.7]{Nbook}.\bigskip

As one can see from this proposition, and the statement of Theorem
\ref{thm12}, it is important to have a technique to be able to calculate
$DL^{-1}X$. We will use a device which can be found for instance in a
different form in the proof of Lemma 1.5.2 in \cite{Nbook}, and is at the core
of the so-called Mehler formula, also found in \cite{Nbook}. It requires a
special operator which introduces a coupling with an independent Wiener space.
This operator $R_{\theta}$ replaces $W$ by the linear combination $W\cos
\theta+W^{\prime}\sin\theta$ where $W^{\prime}$ is an independent copy of $W$.
For instance, if $W$ is Brownian motion and one writes the random variable $X$
as $X=F\left(  W\right)  $ where $F$ is a deterministic Borel-measurable
functional on the space of continuous functions, then
\begin{equation}
R_{\theta}X:=F\left(  W\cos\theta+W^{\prime}\sin\theta\right)  .\label{Rtheta}%
\end{equation}
We have the following formula (akin to the Mehler formula, and proved in the
Appendix), where $sgn\left(  \theta\right)  =\theta/\left\vert \theta
\right\vert $, where $\mathbf{E}^{\prime}$ represents the expectation w.r.t.
the randomness in $W^{\prime}$ only, i.e. conditional on $W$, and where
$D^{\prime}$ is the Malliavin derivative w.r.t. $W^{\prime}$ only.

\begin{lemma}
\label{lemDLX}For any $X\in\mathbf{D}^{1,2}$, for all $s\in T$,%
\[
-D_{s}\left(  L^{-1}X\right)  =\frac{1}{2}\int_{-\pi/2}^{\pi/2}\mathbf{E}%
^{\prime}\left[  D_{s}^{\prime}\left(  R_{\theta}X\right)  \right]
~sgn\left(  \theta\right)  d\theta.
\]

\end{lemma}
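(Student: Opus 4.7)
The plan is to prove the identity chaos-by-chaos, using the Wiener chaos decomposition together with a transparent computation of how $R_\theta$ and $D'$ act on a single chaos. Since both sides of the claimed identity are $H$-valued linear functionals of $X$ that are continuous on $\mathbf{D}^{1,2}$, it will suffice to verify the formula on each $X_n = I_n(f_n) \in \mathcal{H}_n$ and then extend by linearity and an $L^2(\Omega \times T)$ density argument.

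First I would record the left-hand side in chaos form: if $X = \sum_{n=1}^{\infty} I_n(f_n)$ is centered, then by the definition of $L^{-1}$ and by the formula (\ref{DInfn}),
\[
-D_s L^{-1} X = \sum_{n=1}^{\infty} \frac{1}{n}\, D_s I_n(f_n) = \sum_{n=1}^{\infty} I_{n-1}(f_n(s,\cdot)).
\]
Next I would compute the right-hand side term by term. For $X_n = I_n(f_n)$, the crux is that $R_\theta$ replaces $W$ by the Gaussian field $W_\theta := W \cos\theta + W' \sin\theta$, so on the enlarged probability space $R_\theta I_n(f_n)$ is simply the $n$-th iterated integral of $f_n$ with respect to $W_\theta$. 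Because $D'_s$ is the Malliavin derivative only in the $W'$ direction, the chain rule in the extended Wiener space gives
\[
D'_s R_\theta I_n(f_n) = \sin\theta \cdot n \, I_{n-1}^{W_\theta}(f_n(s,\cdot)) = \sin\theta \cdot R_\theta\bigl[n I_{n-1}(f_n(s,\cdot))\bigr].
\]
Taking $\mathbf{E}'$ and using the Mehler-type identity $\mathbf{E}'[R_\theta I_{n-1}(g)] = (\cos\theta)^{n-1} I_{n-1}(g)$ (which follows from independence of $W$ and $W'$ and the binomial expansion of the multilinear form against $W_\theta$), one obtains
\[
\mathbf{E}'\bigl[D'_s R_\theta I_n(f_n)\bigr] = n \sin\theta \, (\cos\theta)^{n-1}\, I_{n-1}(f_n(s,\cdot)).
\]

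Finally I would evaluate the $\theta$-integral. Since $\sin\theta \cdot \mathrm{sgn}(\theta) = |\sin\theta|$ and $\cos\theta$ is even, for each $n \geq 1$,
\[
\frac{1}{2}\int_{-\pi/2}^{\pi/2} n \sin\theta (\cos\theta)^{n-1} \mathrm{sgn}(\theta)\, d\theta = \int_0^{\pi/2} n \sin\theta (\cos\theta)^{n-1}\, d\theta = 1,
\]
by the substitution $u = \cos\theta$. Summing over $n$ reproduces $\sum_{n=1}^{\infty} I_{n-1}(f_n(s,\cdot))$, matching the expression for $-D_s L^{-1} X$ derived above.

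The main technical obstacle is the interchange of the $\theta$-integral, the conditional expectation $\mathbf{E}'$, and the infinite chaos sum, which is not entirely routine because $X$ is only assumed in $\mathbf{D}^{1,2}$. I would handle this by first establishing the identity for finite chaos sums (where everything is manifestly integrable and the Fubini manipulations are transparent), then appealing to the isometry $\mathbf{E}[\|DY\|^2] = \sum n\, n!\,\|f_n\|^2$ from (\ref{D12}) to pass to the limit: the partial sums converge to $X$ in $\mathbf{D}^{1,2}$, both $D L^{-1}$ and $X \mapsto \tfrac12 \int \mathbf{E}'[D' R_\theta X]\,\mathrm{sgn}(\theta)\,d\theta$ are bounded operators $\mathbf{D}^{1,2} \to L^2(\Omega; H)$ (the latter because $\|\mathbf{E}'[D' R_\theta X]\|_{L^2(\Omega;H)} \le |\sin\theta|\,\mathbf{E}[\|DX\|^2]^{1/2}$ by Jensen and the contractivity of $R_\theta$), and a dominated convergence argument on $[-\pi/2,\pi/2]$ closes the estimate.
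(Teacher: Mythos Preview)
Your proposal is correct and follows essentially the same approach as the paper: a chaos-by-chaos computation hinging on the identity $\tfrac{1}{2}\int_{-\pi/2}^{\pi/2}\sin\theta\,(\cos\theta)^{n-1}\,\mathrm{sgn}(\theta)\,d\theta = 1/n$, which the paper phrases as choosing $\phi(\theta)=\tfrac12\,\mathrm{sgn}(\theta)$ in the framework of \cite[Lemma 1.5.2]{Nbook}. The only difference is presentational: the paper defers the mechanics to Nualart's lemma, whereas you spell out the action of $R_\theta$ and $D'$ on each chaos and add an explicit limiting argument for the passage from finite chaos sums to general $X\in\mathbf{D}^{1,2}$.
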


\section{Main results\label{MAIN}}

All results in this section are stated and discussed in the first two
subsections, the first one dealing with consequences of Stein's lemma, the
second with the function $g$. All proofs are in the third subsection.

\subsection{Results using Stein's lemma}

Our first result is tailored to Gaussian comparisons.

\begin{theorem}
\label{thmlb}Let $X\in\mathbf{D}^{1,2}$ be centered. Assume that almost
surely,
\begin{equation}
G:=\left\langle DX;-DL^{-1}X\right\rangle \geq1.\label{lbcond}%
\end{equation}
Then for every $z>0$,
\[
\mathbf{P}\left[  X>z\right]  \geq\bar{\Phi}\left(  z\right)  -\frac
{1}{1+z^{2}}\int_{z}^{\infty}2x\mathbf{P}\left[  X>x\right]  dx
\]

Assume instead that one has the reverse of inequality (\ref{lbcond}), then for
every $z>0$,
\[
\mathbf{P}\left[  X>z\right]  \leq\left(  1+\frac{1}{z^{2}}\right)  \bar{\Phi
}\left(  z\right)  .
\]

\end{theorem}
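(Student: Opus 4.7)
The plan is to derive, from Corollary \ref{Steincor} applied to $h = \mathbf{1}_{(-\infty,z]}$ and from Lemma \ref{lemkey}, the master identity
\[
\mathbf{P}[X>z] \;=\; \bar\Phi(z) \;+\; \mathbf{E}\!\left[f'(X)(G-1)\right],
\]
where $f$ is the explicit solution of Stein's equation given in Lemma \ref{Steinlem}. Indeed, Corollary \ref{Steincor} gives $\mathbf{P}[X>z] = \bar\Phi(z) - \mathbf{E}[f'(X)] + \mathbf{E}[Xf(X)]$, and Lemma \ref{lemkey} (applied to the function $f$, whose derivative is bounded by $1$) rewrites $\mathbf{E}[Xf(X)] = \mathbf{E}[f'(X)G]$. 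Both parts of the theorem will then follow by analyzing the sign and magnitude of $f'$ and by a second application of Lemma \ref{lemkey} to a well-chosen test function.

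The crucial preliminary is to record two properties of $f'$ for $z>0$: (i) $f'(x)\geq 0$ on $\{x\leq z\}$ and $f'(x)\leq 0$ on $\{x>z\}$; and (ii) $|f'(x)|\leq 1/(1+z^2)$ on $\{x>z\}$. Both are read off from the explicit representation of $f$ combined with the Mills ratio bounds $\tfrac{x\phi(x)}{x^2+1}\leq \bar\Phi(x) \leq \tfrac{\phi(x)}{x}$ for $x>0$, which, together with the Stein equation rewritten as $f'(x) = xf(x) + h(x) - (1-\bar\Phi(z))$, yield in particular $|f'(x)| = (1-\bar\Phi(z))[1-x\sqrt{2\pi}e^{x^2/2}\bar\Phi(x)] \leq (1-\bar\Phi(z))/(1+x^2) \leq 1/(1+z^2)$ on $\{x>z\}$, and a symmetric computation establishes positivity on $\{x\leq z\}$.

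For the lower bound ($G\geq 1$), I would drop the nonnegative contribution from $\{X\leq z\}$ in the master identity and then apply (ii) together with $0\leq G-1 \leq G$ to obtain
\[
\mathbf{E}[f'(X)(G-1)] \;\geq\; -\tfrac{1}{1+z^2}\,\mathbf{E}\!\left[G\,\mathbf{1}_{X>z}\right].
\]
A second application of Lemma \ref{lemkey}, now to the auxiliary function $\tilde h(x) := (x-z)^+$ (continuous, piecewise $C^1$, with bounded derivative $\mathbf{1}_{x>z}$), converts the right-hand side into $-\tfrac{1}{1+z^2}\,\mathbf{E}[X(X-z)^+]$. Standard tail integration, or equivalently the elementary inequality $x(x-z)\leq x^2-z^2$ on $\{x>z>0\}$, gives $\mathbf{E}[X(X-z)^+] \leq \int_z^\infty 2x\,\mathbf{P}[X>x]\,dx$, finishing the first claim.

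The upper-bound case ($G\leq 1$) is handled symmetrically: dropping the nonpositive contribution on $\{X\leq z\}$ in the master identity, using (i)--(ii) together with $1-G\geq 0$, yields $\mathbf{E}[f'(X)(G-1)] \leq \tfrac{1}{1+z^2}\mathbf{E}[(1-G)\mathbf{1}_{X>z}]$; the second application of Lemma \ref{lemkey} with $\tilde h$ shows $\mathbf{E}[G\mathbf{1}_{X>z}] = \mathbf{E}[X(X-z)^+]\geq 0$, so the right-hand side is at most $\mathbf{P}[X>z]/(1+z^2)$, and rearranging gives the stated bound. The main technical hurdle is the sign/size analysis of $f'$ via Mills' ratio; once that is in place, the double use of Lemma \ref{lemkey} with the right test function is mechanical. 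A minor caveat, namely the density assumption needed in Lemma \ref{lemkey}, is either automatic under $G\geq 1$ or reduces the $G\leq 1$ claim to the trivial constant case.
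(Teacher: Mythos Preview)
Your proof is correct and follows the same overall strategy as the paper's (Stein's equation via Corollary~\ref{Steincor}, the sign/size analysis of $f'$ via the Mills-ratio bounds, and Lemma~\ref{lemkey}), but your organization is tighter in two places worth noting. First, for the lower bound the paper applies Lemma~\ref{lemkey} to the localized function $h(x)=(f(x)-f(z))\mathbf{1}_{x\le z}$ and then handles the leftover term $A=\mathbf{E}[\mathbf{1}_{X>z}X(f(X)-f(z))]$ by the mean-value theorem and the crude bound $X(X-z)\le X^2$; you instead apply Lemma~\ref{lemkey} directly to $f$ to get the clean master identity, and your second application of Lemma~\ref{lemkey} to $\tilde h(x)=(x-z)^+$ yields the exact identity $\mathbf{E}[G\,\mathbf{1}_{X>z}]=\mathbf{E}[X(X-z)^+]$, which the paper only exploits later in the proof of Corollary~\ref{corlblast}. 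Second, for the upper bound the paper invokes the projective positivity $\mathbf{E}[G\mid X]\ge 0$ from \cite[Proposition~3.9]{NP}, whereas your route is self-contained: the same identity $\mathbf{E}[G\,\mathbf{1}_{X>z}]=\mathbf{E}[X(X-z)^+]\ge 0$ replaces that external input. The net effect is the same inequalities with slightly less bookkeeping; the density caveat you flag is handled identically in both approaches.
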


Before proving this theorem, we record some consequences of its lower bound
result in the next Corollary. In order to obtain a more precise lower bound
result on the tail $S\left(  z\right)  :=\mathbf{P}\left[  X>z\right]  $, it
appears to be necessary to make some regularity and integrability assumptions
on $S$. This is the aim of the second point in the next corollary. The first
and third points show what can be obtained by using only an integrability
condition, with no regularity assumption: we may either find a universal lower
bound on such quantities as $X$'s variance (the constant we find there may not
be of any special significance), or an asymptotic statement on $S$ itself.

\begin{corollary}
\label{thmlbcor}Let $X\in\mathbf{D}^{1,2}$ be centered. Let $S\left(
z\right)  :=\mathbf{P}\left[  X>z\right]  $. Assume that condition
(\ref{lbcond}) holds.

\begin{enumerate}
\item We have%
\[
Var\left[  X\right]  \geq K_{u}:=\frac{1}{\pi^{2}}\left(  2\sqrt{1+2\sqrt
{2\pi}}-1\right)  ^{2}\simeq0.21367.
\]

\item Assume there exists a constant $c>2$ such that $\left\vert S^{\prime
}\left(  z\right)  \right\vert /S\left(  z\right)  \leq c/z$ holds for large
$z$. Then for large $z$,%
\[
\mathbf{P}\left[  X>z\right]  \geq\frac{\left(  c-2\right)  \left(
1+z^{2}\right)  }{c-2+cz^{2}}\bar{\Phi}\left(  z\right)  \simeq\frac{\left(
c-2\right)  }{c}\bar{\Phi}\left(  z\right)  .
\]

\item Assume there exists a constant $c>2$ such that $S\left(  z\right)
<z^{-c}$ holds for large $z$. Then, for large $z$,%
\[
\sup_{x\geq z}x^{c}\mathbf{P}\left[  X>x\right]  \geq\frac{c-2}{c}z^{c}%
\bar{\Phi}\left(  z\right)  .
\]
Consequently,
\[
\limsup_{z\rightarrow\infty}\frac{\mathbf{P}\left[  X>z\right]  }{\bar{\Phi
}\left(  z\right)  }\geq\frac{c-2}{c}.
\]

\end{enumerate}
\end{corollary}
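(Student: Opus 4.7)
All three claims build on Theorem~\ref{thmlb}, which I would rewrite as $S(z) \geq \bar{\Phi}(z) - A(z)/(1+z^{2})$ with $A(z) := \int_{z}^{\infty} 2xS(x)\,dx$. The unifying plan is to pair this inequality with an auxiliary bound on either $A(z)$ or $S(z)$ chosen to suit each assertion.

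For Point~1, I would apply Theorem~\ref{thmlb} to both $X$ and $-X$: the random variable $G$ is invariant under $X \mapsto -X$ (since $D$ and $L^{-1}$ are linear), so the hypothesis $G \geq 1$ transfers. Summing the two tail inequalities gives
\[
\mathbf{P}[|X|>z] \;\geq\; 2\bar{\Phi}(z) \;-\; \frac{\int_{z}^{\infty} 2y\,\mathbf{P}[|X|>y]\,dy}{1+z^{2}}.
\]
Integrating over $z \in [0,\infty)$, the left side yields $\mathbf{E}[|X|]$; the Gaussian term contributes $\int_{0}^{\infty} 2\bar{\Phi}(z)\,dz = \sqrt{2/\pi}$; and a Fubini swap turns the correction into $\int_{0}^{\infty} 2y\,\mathbf{P}[|X|>y]\arctan(y)\,dy$, bounded by $(\pi/2)\mathbf{E}[X^{2}] = \pi v/2$ using $\arctan \leq \pi/2$, where $v := \mathrm{Var}[X]$. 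This produces $\mathbf{E}[|X|] \geq \sqrt{2/\pi} - \pi v/2$, which combined with the Jensen bound $\mathbf{E}[|X|] \leq \sqrt{v}$ gives the quadratic inequality $\pi v/2 + \sqrt{v} \geq \sqrt{2/\pi}$, whose positive root in $\sqrt{v}$ is $(\sqrt{1+2\sqrt{2\pi}}-1)/\pi$, the stated $\sqrt{K_{u}}$.

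For Point~2, I would interpret the hypothesis as a lower bound on the hazard rate $-S'/S \geq c/z$ (the direction that yields a genuine tail-decay rate and an upper bound on $A$). Integrating this gives $S(y) \leq S(z)(z/y)^{c}$ for $y \geq z$, hence
\[
A(z) \;\leq\; \int_{z}^{\infty} 2y\,S(z)(z/y)^{c}\,dy \;=\; \frac{2z^{2}S(z)}{c-2}.
\]
Substituting into Theorem~\ref{thmlb} and solving for $S(z)$ produces the claimed bound after clearing fractions.

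Point~3 I would prove by contradiction. If $\sup_{x \geq z} x^{c}S(x) < \tfrac{c-2}{c} z^{c}\bar{\Phi}(z)$, then $S(x) < \tfrac{c-2}{c} z^{c}\bar{\Phi}(z)/x^{c}$ on $[z,\infty)$; integration gives $A(z) < 2z^{2}\bar{\Phi}(z)/c$, and Theorem~\ref{thmlb} forces
\[
S(z) \;>\; \bar{\Phi}(z)\,\frac{c+(c-2)z^{2}}{c(1+z^{2})} \;>\; \frac{c-2}{c}\,\bar{\Phi}(z),
\]
contradicting the supremum hypothesis at $x=z$. The $\limsup$ consequence follows because $x^{c}\bar{\Phi}(x)$ is eventually decreasing, so a hypothetical $\limsup S/\bar{\Phi} < (c-2)/c - \delta$ would cap $\sup_{x \geq z}x^{c}S(x)$ strictly below the level forced by the first part of Point~3 for large $z$, again a contradiction. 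The main obstacle lies in Point~1: one must recognize that $K_{u}$ emerges by integrating the \emph{two-sided} tail bound against Lebesgue measure on $[0,\infty)$ rather than optimizing at a single $z$ (which loses a small factor), and that the $\pi$ in $K_{u}$ is supplied precisely by $\int_{0}^{\infty}dz/(1+z^{2}) = \pi/2$ through the $\arctan$ estimate.
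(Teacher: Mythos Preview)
Your proposal is correct and follows essentially the paper's strategy in all three parts, with two differences worth noting.

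For Point~1, both you and the paper integrate the inequality of Theorem~\ref{thmlb} over $z\in[0,\infty)$, invoke $\int_0^\infty(1+z^2)^{-1}dz=\pi/2$, and close with a quadratic inequality. The paper works one-sidedly with $a_1=\mathbf{E}[X_+]$ and $a_2=\mathbf{E}[X^2\mathbf{1}_{X>0}]$, then appeals to the invariance of the hypothesis under $X\mapsto-X$ to pass from a bound on $a_2$ to a bound on $\mathrm{Var}[X]$; that passage is written somewhat loosely (``we also get $\mathbf{E}[X^2\mathbf{1}_{X<0}]=2F(0)$''). Your version, summing the inequalities for $X$ and $-X$ at the outset and working with $\mathbf{P}[|X|>z]$, $\mathbf{E}|X|$, and $v=\mathrm{Var}[X]$ directly, is cleaner and reaches the stated constant $K_u=(\sqrt{1+2\sqrt{2\pi}}-1)^2/\pi^2$ without any such symmetry step.

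For Point~2, you correctly observe that the argument requires $-S'/S\ge c/z$ rather than $|S'|/S\le c/z$ as printed; the paper's own proof in fact uses the inequality in your direction (the step $\int_z^\infty xS(x)\,dx\le c^{-1}\int_z^\infty x^2|S'(x)|\,dx$ needs $S(x)\le (x/c)|S'(x)|$). Modulo this sign, your derivation of $A(z)\le 2z^2 S(z)/(c-2)$ by integrating the hazard-rate bound is equivalent to the paper's integration-by-parts route. For Point~3, your proof by contradiction and the paper's direct rearrangement are the same computation read in two directions.
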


Let us discuss the assumptions and results in the corollary from a
quantitative standpoint. The assumption of point 2, $\left\vert S^{\prime
}\left(  z\right)  \right\vert /S\left(  z\right)  \leq c/z$, when integrated,
implies no more than existence of a moment of order larger than $2$; it does,
however, represent an additional monotonicity condition since it refers to
$S^{\prime}$. The assumption of point 3, which is weaker because it does not
require any monotonicity, also implies the same moment condition. This moment
condition is little more than the integrability required from $X$ belonging to
$\mathbf{D}^{1,2}$. If $c$ can be made arbitrarily large (for instance in
point $3$, this occurs when $X$ is assumed to have moments of all orders),
asymptotically $(c-2)/c$ can be replaced by $1$, yielding the sharpest
possible comparison to the normal tail. If indeed $S$ is close to the normal
tail, it is morally not a restriction to assume that $c$ can be taken
arbitrarily large: it is typically easy to check this via a priori estimates.

\subsection{Results using the function $g$}

We now present results which do not use Stein's lemma, but refer only to the
random variable $G:=\left\langle DX;-DL^{-1}X\right\rangle $ and the resulting
function $g\left(  z\right)  :=\mathbf{E}[G|X=z]$ introduced in (\ref{gee}).
We will prove the theorem below using the results in \cite{NV} on
representation of densities. Its corollary shows how to obtain quantitatively
explicit upper and lower bounds on the tail of a random variable, which are as
sharp as the upper and lower bounds one might establish on $g$. A description
of the advantages and disadvantages of using $g$ over Stein's lemma follows
the statements of the next theorem and its corollary.

\begin{theorem}
\label{ThmA}Let $X\in\mathbf{D}^{1,2}$ be centered. Let $G:=\left\langle
DX;-DL^{-1}X\right\rangle $ and $g\left(  z\right)  :=\mathbf{E}[G|X=z]$.
Assume that $X$ has a density which is positive on the interior of its support
$(a,+\infty)$. For $x\geq0$, let%
\[
A\left(  x\right)  :=\exp\left(  -\int_{0}^{x}\frac{ydy}{g\left(  y\right)
}\right)  .
\]
Then for all $x>0$,%
\begin{equation}
\mathbf{P}\left[  X>x\right]  =\frac{\mathbf{E}\left\vert X\right\vert }%
{2}\left(  \frac{A\left(  x\right)  }{x}-\int_{x}^{\infty}\frac{A\left(
y\right)  }{y^{2}}dy\right)  . \label{tailA}%
\end{equation}

\end{theorem}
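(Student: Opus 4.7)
The plan is to obtain the tail formula from the density representation in Proposition~\ref{nounouvivi} by integrating and then doing a single integration by parts.

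First I would note that by Proposition~\ref{nounouvivi}, on the interior of the support we have
\[
\rho(z) \;=\; \frac{\mathbf{E}|X|}{2\,g(z)}\,A(z),
\]
so that
\[
\mathbf{P}[X>x] \;=\; \int_{x}^{\infty}\rho(z)\,dz \;=\; \frac{\mathbf{E}|X|}{2}\int_{x}^{\infty}\frac{A(z)}{g(z)}\,dz
\]
for $x>0$ (here I also need $x\ge a$, but the case $0\le x<a$ cannot occur since $a<0$ because $X$ is centered with a density on an interval).

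Next, the definition of $A$ immediately gives the ODE $A'(z)=-(z/g(z))\,A(z)$, so that
\[
\frac{A(z)}{g(z)} \;=\; -\frac{A'(z)}{z}.
\]
Substituting this into the integral and integrating by parts with $u=1/z$ and $dv=A'(z)\,dz$ yields
\[
\int_{x}^{\infty}\frac{A(z)}{g(z)}\,dz \;=\; -\int_{x}^{\infty}\frac{A'(z)}{z}\,dz \;=\; \frac{A(x)}{x} - \int_{x}^{\infty}\frac{A(z)}{z^{2}}\,dz,
\]
provided the boundary term $A(z)/z$ vanishes at infinity. Multiplying by $\mathbf{E}|X|/2$ gives exactly the claimed identity \eqref{tailA}.

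The only real thing to verify is that the boundary term at infinity is zero and that the integration by parts is legitimate. Since $G=\langle DX,-DL^{-1}X\rangle\ge 0$ a.s. (a known fact used repeatedly in \cite{NP,NV}), one has $g\ge 0$, hence the exponent in $A$ is nonpositive and $0\le A(z)\le 1$ for $z\ge 0$; therefore $A(z)/z\le 1/z\to 0$ as $z\to\infty$. Moreover $A$ is absolutely continuous on $[x,\infty)$ with derivative given above, so the integration by parts is justified. This is the only step requiring any care; everything else is a direct computation from the density formula. I would finish by remarking that the identity extends to all $x>0$ (and trivially by continuity at $x=0$ if needed), which completes the proof.
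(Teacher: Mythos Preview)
Your proof is correct and follows essentially the same route as the paper's: apply the density formula from Proposition~\ref{nounouvivi}, rewrite the integrand via the ODE $A'(z)=-zA(z)/g(z)$, integrate by parts, and dispose of the boundary term $A(z)/z\to 0$ using boundedness of $A$. One small correction: the inequality $G\ge 0$ does not hold almost surely in general; what is true, and all that is needed here, is $g(z)=\mathbf{E}[G\mid X=z]\ge 0$, which follows either from \cite[Proposition~3.9]{NP} or directly from the density formula (since $\rho\ge 0$ and $A>0$).
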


\begin{remark}
The density formula in Proposition (\ref{nounouvivi}) shows that $g$ must be
non-negative. Assuming our centered $X\in\mathbf{D}^{1,2}$ has a density
$\rho$, we have already noted that $\rho$ must be positive on $(a,b)$ and zero
outside. To ensure that $b=+\infty$, as is needed in the above theorem, it is
sufficient to assume that $g$ is bounded below on $[0,b)$ by a positive
constant. If in addition we can assume, as in (\ref{lbcond}), that this
lower-boundedness of $g$ holds everywhere, then $X$ has a density, and its
support is $\mathbf{R}$.
\end{remark}

\begin{corollary}
\label{corA}Assume that for some $c^{\prime}\in\left(  0,1\right)  $ and some
$z_{0}>1$, we have for all $x>z_{0}$, $g\left(  x\right)  \leq c^{\prime}%
x^{2}$. Then, with $K:=\frac{\mathbf{E}\left\vert X\right\vert }{2}%
\frac{\left(  c^{\prime}\right)  ^{c^{\prime}}}{\left(  1+c^{\prime}\right)
^{1+c^{\prime}}},$ for $x>z_{0}$,%
\begin{equation}
\mathbf{P}\left[  X>x\right]  \geq K\frac{A\left(  x\right)  }{x}%
.\label{corAzero}%
\end{equation}

\begin{enumerate}
\item Under the additional assumption (\ref{lbcond}), $g\left(  x\right)
\geq1$ everywhere, and we have%
\[
\mathbf{P}\left[  X>z\right]  \geq K\frac{1}{x}\exp\left(  -\frac{x^{2}}%
{2}\right)  \simeq\sqrt{2\pi}K\bar{\Phi}\left(  z\right)  .
\]

\item If we have rather the stronger lower bound $g\left(  x\right)  \geq
c^{\prime\prime}x^{2}$ for some $c^{\prime\prime}\in(0,c^{\prime}]$ and all
$x>z_{0}$, then for $x>z_{0}$, and with some constant $K^{\prime}$ depending
on $g$, $c^{\prime\prime}$ and $z_{0}$,%
\[
\mathbf{P}\left[  X>z\right]  \geq K^{\prime}x^{-1-1/c^{\prime\prime}}.
\]

\item If we have instead that $g\left(  x\right)  \geq c_{1}x^{p}\ $for some
$c_{1}>0$, $p<2$, and for all $x>z_{0}$, then for $x>z_{0}$, and with some
constant $K^{\prime\prime}$ depending on $g$, $c_{1}$, $p$, and $z_{0}$,%
\[
\mathbf{P}\left[  X>z\right]  \geq K^{\prime\prime}\exp\left(  -\frac{x^{2-p}%
}{\left(  2-p\right)  c_{1}}\right)  .
\]

\item In the last two points, if the inequalities on $g$ in the hypotheses are
reversed, the conclusions are also reversed, without changing any of the
constants.\bigskip
\end{enumerate}
\end{corollary}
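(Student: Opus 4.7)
The plan is to invoke the exact tail formula (\ref{tailA}) from Theorem \ref{ThmA}, which writes
\[
\mathbf{P}[X>x] = \frac{\mathbf{E}|X|}{2}\left(\frac{A(x)}{x} - \int_x^\infty \frac{A(y)}{y^2}\,dy\right),
\]
so that deriving the lower bound (\ref{corAzero}) reduces to bounding the tail integral from above by a constant (depending on $c'$) times $A(x)/x$.

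The central estimate would exploit the defining ODE $A'(y) = -yA(y)/g(y)$ to rewrite $A(y)/y^2 = -g(y)A'(y)/y^3$. The hypothesis $g(y)\le c'y^2$ on $[z_0,\infty)$ then gives the pointwise inequality $A(y)/y^2 \le -c'A'(y)/y$ for $y>z_0$. An integration by parts on $[x,\infty)$---valid because $A(y)/y\to 0$ since $A$ is decreasing and bounded by $1$---yields $-\int_x^\infty A'(y)/y\,dy = A(x)/x - \int_x^\infty A(y)/y^2\,dy$, and combining these two inequalities produces a linear inequality whose solution is $\int_x^\infty A(y)/y^2\,dy \le \frac{c'}{1+c'}\cdot A(x)/x$. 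Substituting back gives $\mathbf{P}[X>x]\ge \frac{\mathbf{E}|X|}{2(1+c')}\cdot A(x)/x$. The precise constant $(c')^{c'}/(1+c')^{1+c'}$ stated in the corollary is the maximum over $t>0$ of $t^{c'}/(1+t)^{1+c'}$ (attained at $t=c'$), suggesting that the sharp form arises by splitting the tail integral at an auxiliary threshold $\lambda x$, bounding one piece by $A(x)\int_x^{\lambda x} y^{-2}\,dy$ and the other via the ratio estimate $A(y)/A(x)\le (x/y)^{1/c'}$ on $[\lambda x,\infty)$, then optimizing over $\lambda$. Matching this precise expression is the main bookkeeping obstacle.

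For sub-points 1--3, I would substitute lower bounds on $g$ into the definition $A(x)=\exp(-\int_0^x y\,dy/g(y))$ to obtain lower bounds on $A(x)$, and then combine with (\ref{corAzero}). Under $g\ge 1$ (point 1) the integrand is at most $y$, hence $A(x)\ge e^{-x^2/2}$; combined with the asymptotic $\bar{\Phi}(z)\sim e^{-z^2/2}/(z\sqrt{2\pi})$ this yields the stated Gaussian-type lower bound. For point 2, splitting $\int_0^x y\,dy/g(y)$ at $z_0$ and using $g(y)\ge c''y^2$ on $[z_0,x]$ bounds the tail of the integral by $(1/c'')\ln(x/z_0)$, giving $A(x)\ge K_0 x^{-1/c''}$ and hence the polynomial lower bound $K'x^{-1-1/c''}$. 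For point 3, the same splitting with $g(y)\ge c_1 y^p$ on $[z_0,x]$ produces $A(x)\ge K_0\exp(-x^{2-p}/((2-p)c_1))$; the residual $1/x$ factor coming from $A(x)/x$ is subexponential against the stretched exponential and can be absorbed into the constant (or into an arbitrarily small slack in the exponent if desired).

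Finally, for point 4, the companion upper bound $\mathbf{P}[X>x]\le \frac{\mathbf{E}|X|}{2}\cdot A(x)/x$ is immediate from (\ref{tailA}) and the non-negativity of the tail integral. Reversing the hypotheses of points 2 and 3 from lower to upper bounds on $g$ turns the same elementary computations above into upper bounds on $A(x)$, with identical constants, yielding the reversed conclusions at once without any new ingredients.
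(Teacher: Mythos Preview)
Your argument is correct and, for the core inequality (\ref{corAzero}), actually \emph{sharper} than the paper's. The paper proceeds exactly along the lines of your second sketch: it splits $\int_x^\infty A(y)y^{-2}\,dy$ at $kx$, bounds the near piece by $A(x)(1/x-1/(kx))$ using that $A$ is decreasing, bounds the far piece by $A(kx)/(kx)$, uses $A(kx)/A(x)\le k^{-1/c'}$ (from $g\le c'y^2$), and then optimizes over $k$; the maximum of $k\mapsto(1-k^{-1/c'})/k$ is $(c')^{c'}/(1+c')^{1+c'}$, which is the stated $K$. Your first route---rewriting $A(y)/y^2=-g(y)A'(y)/y^3\le -c'A'(y)/y$ and closing the resulting linear inequality via integration by parts---is a genuinely different and cleaner device, and the constant it produces, $\frac{\mathbf{E}|X|}{2(1+c')}$, is \emph{strictly larger} than the paper's $K$ for every $c'\in(0,1)$, since $(c')^{c'}/(1+c')^{c'}<1$. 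So you need not chase the splitting argument to ``match'' the stated constant: you have already improved it.

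For Points 1--4 your approach coincides with the paper's (the paper is terse: ``Point 1 is immediate since $g\ge 1$ implies $A(x)\ge e^{-x^2/2}$; similarly for Point 2 \dots\ and Point 3 follows in the same fashion; Point 4 is shown identically by reversing all inequalities''). One small caveat on Point 3: the residual $1/x$ factor cannot literally be absorbed into a fixed constant $K''$ while keeping the exact exponent $x^{2-p}/((2-p)c_1)$; you need the slack-in-exponent option you mention (or else carry the $1/x$). The paper is equally loose on this point, so this is not a defect of your proof relative to theirs.
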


The tail formula (\ref{tailA}) in Theorem \ref{ThmA} readily implies
asymptotic estimates on $S$ of non-Gaussian type if one is able to compare $g$
to a power function. Methods using Stein's lemma, at least in its form
described in Section \ref{STEIN}, only work efficiently for comparing $S$ to
the Gaussian tail. Arguments found in Nourdin and Peccati's articles (e.g.
\cite{NP}) indicate that Stein's method may be of use in some specific
non-Gaussian cases, which one could use to compare tails to the Gamma tail,
and perhaps to other tails in the Pearson family, which would correspond to
polynomial $g$ with degree at most $2$. The flexibility of our method of
working directly with $g$ rather than Stein's lemma, is that it seems to allow
any type of tail. Stein's method has one important advantage, however: it is
not restricted to having a good control on $g$; Theorem \ref{thmlb}
establishes Gaussian lower bounds on tails by only assuming (\ref{lbcond}) and
mild conditions on the tail itself. This is to be compared to the lower bound
\cite[Theorem 4.2]{NV} proved via the function $g$ alone, where it required
growth conditions on $g$ which may not be that easy to check.

There is one intriguing, albeit perhaps technical, fact regarding the use of
Stein's method: in Point 1 of the above Corollary \ref{corA}, since the
comparison is made with a Gaussian tail, one may wonder what the usage Stein's
lemma via Theorem \ref{thmlb} may produce when assuming, as in Point 1 of
Corollary \ref{corA}, that $g\left(  x\right)  \geq1$ and $g$ grows slower
than $x^{2}$. As it turns out, Stein's method is not systematically superior
to Corollary \ref{corA}, as we now see.

\begin{corollary}
[Consequence of Theorem \ref{thmlb}]\label{corlblast}Assume that $g\left(
x\right)  \geq1$ and, for some $c^{\prime}<1$ and large $x>z_{0}$, $g\left(
x\right)  \leq c^{\prime}x^{2}$. Then for $z>z_{0}$,
\[
\mathbf{P}\left[  X>z\right]  \geq\frac{1+z^{2}}{1+\left(  2c^{\prime
}+1\right)  z^{2}}\bar{\Phi}\left(  z\right)  \simeq\frac{1}{2c^{\prime}%
+1}\bar{\Phi}\left(  z\right)  .
\]

\end{corollary}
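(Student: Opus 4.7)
The plan is to apply Theorem~\ref{thmlb} and absorb its correction term $I(z)/(1+z^{2})$ into a multiple of $S(z)$, exploiting the extra hypothesis $g(x)\leq c'x^{2}$. Write $S(z):=\mathbf{P}[X>z]$ and $I(z):=\int_{z}^{\infty}2x\,S(x)\,dx$. Theorem~\ref{thmlb} rearranges to $S(z)(1+z^{2})+I(z)\geq(1+z^{2})\bar{\Phi}(z)$, so the claim $S(z)[1+(2c'+1)z^{2}]\geq(1+z^{2})\bar{\Phi}(z)$ is algebraically equivalent to the bound $I(z)\leq 2c'z^{2}S(z)$; proving this tail bound on $I(z)$ is the heart of the argument.

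To bound $I(z)$, I would apply Lemma~\ref{lemkey} to (a smooth approximation of) $h(x)=(x-z)_{+}$, whose derivative is $\mathbf{1}_{x>z}$. A short computation combined with integration by parts yields
\[
\int_{z}^{\infty}g(x)\rho(x)\,dx \;=\; \mathbf{E}[X(X-z)_{+}] \;=\; I(z)-z\tilde{I}(z), \qquad \tilde{I}(z):=\int_{z}^{\infty}S(x)\,dx.
\]
Since $g(x)\leq c'x^{2}$ on $\{X>z\}$ (as $z>z_{0}$), the leftmost integral is at most $c'\mathbf{E}[X^{2}\mathbf{1}_{X>z}]=c'(z^{2}S(z)+I(z))$, which rearranges into the working inequality
\[
(1-c')\,I(z) \;\leq\; c'z^{2}S(z) \;+\; z\tilde{I}(z).
\]

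The main obstacle is absorbing $z\tilde{I}(z)$ so as to reach $I(z)\leq 2c'z^{2}S(z)$. The trivial bound $\tilde{I}(z)\leq I(z)/(2z)$ produces $I(z)\leq 2c'z^{2}S(z)/(1-2c')$, which is strong enough only when $c'<1/2$. To cover the full range $c'<1$, I would revisit Theorem~\ref{thmlb}'s own proof: the coarse estimate $\mathbf{E}[(G-1)\mathbf{1}_{X>z}]\leq I(z)$ used there can be sharpened under the present hypothesis to $\mathbf{E}[(G-1)\mathbf{1}_{X>z}]\leq(c'z^{2}-1)S(z)+c'I(z)$ (from $G\leq c'X^{2}$), which feeds through the same Stein-equation machinery and yields the companion inequality $(1+c')z^{2}S(z)+c'I(z)\geq(1+z^{2})\bar{\Phi}(z)$. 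The proof is then completed by an algebraic elimination of $I(z)$ between this refined inequality and Theorem~\ref{thmlb} itself, combined with the working inequality above. The delicate point will be choosing the coefficients in this elimination so that the surviving factor of $z^{2}S(z)$ is exactly $2c'+1$, producing the sharp constant $1/(2c'+1)$ uniformly for $c'\in(0,1)$; this is where the proof is most sensitive to the precise form of the Stein-equation bounds on $|f'|$ on $\{x>z\}$.
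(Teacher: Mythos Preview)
Your approach is essentially the paper's: it too goes back to inequality~(\ref{4proofpoint1}) in the proof of Theorem~\ref{thmlb} (carrying $B:=\mathbf{E}[X(X-z)\mathbf{1}_{X>z}]$ rather than your $I(z)$), applies Lemma~\ref{lemkey} with $h(x)=(x-z)_{+}$ exactly as you do to get $B=\mathbf{E}[G\,\mathbf{1}_{X>z}]$, bounds this by $c'\mathbf{E}[X^{2}\mathbf{1}_{X>z}]$, and combines with integration by parts and the relation $E\geq 2D$ (your $\tilde I(z)\leq I(z)/(2z)$) to reach $B\leq E\leq 2c'z^{2}S(z)$. The paper does \emph{not} carry out the refined Stein-equation elimination you sketch for $c'\in[1/2,1)$; it simply asserts that the displayed inequalities ``easily imply'' $E\leq 2c'z^{2}S(z)$, so your flagging of the $c'<1/2$ obstacle is in fact more careful than the published argument at that step.
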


When this corollary and Point 1 in Corollary \ref{corA} are used in an
efficient situation, this means that $X$ is presumably \textquotedblleft
subgaussian\textquotedblright\ as well as being \textquotedblleft
supergaussian\textquotedblright\ as a consequence of assumption (\ref{lbcond}%
). For illustrative purposes, we can translate this roughly as meaning that
$g\left(  x\right)  $ is in the interval, say, $[1,1+\varepsilon]$ for all
$x$. This implies that we can take $c^{\prime}\rightarrow0$ in both
Corollaries \ref{corA} and \ref{corlblast}; as a consequence, the first
corollary yields $\mathbf{P}\left[  X>z\right]  \geq\bar{\Phi}\left(
z\right)  $, while the second gives $\mathbf{P}\left[  X>z\right]  \geq
(\sqrt{2\pi}\mathbf{E}\left\vert X\right\vert /2)~\bar{\Phi}\left(  z\right)
$. The superiority of one method over another then depends on how $\sqrt{2\pi
}\mathbf{E}\left\vert X\right\vert /2$ compares to $1$. It is elementary to
check that, in \textquotedblleft very sharp\textquotedblright\ situations,
which means that $\varepsilon$ is quite small, $\sqrt{2\pi}\mathbf{E}%
\left\vert X\right\vert /2$ will be close to $1$, from which one can only
conclude that both methods appear to be equally efficient.

\subsection{Proofs}

We now turn to the proofs of the above results.\vspace{0.05in}

\begin{proof}
[Proof of Theorem \ref{thmlb}]\emph{Step 1: exploiting the negativity of}
$f^{\prime}$. From lemma \ref{Steinlem}, we are able to calculate the
derivative of the solution $f$ to Stein's equation:

\begin{itemize}
\item for $x\leq z$, $f^{\prime}\left(  x\right)  =\bar{\Phi}\left(  z\right)
\left(  1+\sqrt{2\pi}\left(  1-\bar{\Phi}\left(  x\right)  \right)
xe^{x^{2}/2}\right)  ;$

\item for $x>z$, $f^{\prime}\left(  x\right)  =\left(  1-\bar{\Phi}\left(
z\right)  \right)  \left(  -1+\sqrt{2\pi}\bar{\Phi}\left(  x\right)
xe^{x^{2}/2}\right)  .$
\end{itemize}

We now use the standard estimate, valid for all $x>0$:%
\begin{equation}
\frac{x}{(x^{2}+1)\sqrt{2\pi}}e^{-x^{2}/2}\leq\bar{\Phi}\left(  x\right)
\leq\frac{1}{x\sqrt{2\pi}}e^{-x^{2}/2}. \label{vieuxtruc}%
\end{equation}
In the case $x>z$, since $z>0$, the upper estimate yields $f^{\prime}\left(
x\right)  \leq\left(  1-\bar{\Phi}\left(  z\right)  \right)  \left(
-1+1\right)  =0$. Now by the expression for $\mathbf{P}\left[  X>z\right]  $
in Corollary \ref{Steincor}, the negativity of $f^{\prime}$ on $\left\{
x>z\right\}  $ implies for all $z>0$,%
\begin{align*}
\mathbf{P}\left[  X>z\right]   &  =\bar{\Phi}\left(  z\right)  -\mathbf{E}%
\left[  \mathbf{1}_{X\leq z}f^{\prime}\left(  X\right)  \right]
-\mathbf{E}\left[  \mathbf{1}_{X>z}f^{\prime}\left(  X\right)  \right]
+\mathbf{E}\left[  Xf\left(  X\right)  \right] \\
&  \geq\bar{\Phi}\left(  z\right)  -\mathbf{E}\left[  \mathbf{1}_{X\leq
z}f^{\prime}\left(  X\right)  \right]  +\mathbf{E}\left[  Xf\left(  X\right)
\right]  .
\end{align*}
\vspace{0.2cm}

\emph{Step 2: Exploiting the positivities and the smallness of }$f^{\prime}$.
Using Step 1, we have%
\[
\mathbf{P}\left[  X>z\right]  \geq\bar{\Phi}\left(  z\right)  -\mathbf{E}%
\left[  \mathbf{1}_{X\leq z}f^{\prime}(X)\right]  +\mathbf{E}\left[
\mathbf{1}_{X\leq z}Xf(X)\right]  +\mathbf{E}\left[  \mathbf{1}_{X>z}%
Xf(X)\right]  .
\]
We apply Lemma \ref{lemkey} to the function $h\left(  x\right)  =\left(
f\left(  x\right)  -f\left(  z\right)  \right)  \mathbf{1}_{x\leq z}$; $h$ is
continuous everywhere; it is differentiable everywhere with a bounded
derivative, equal to $f^{\prime}\left(  x\right)  \mathbf{1}_{x\leq z}$,
except at $x=z$. Thus we get%
\begin{align}
\mathbf{P}\left[  X>z\right]   &  \geq\bar{\Phi}\left(  z\right)
-\mathbf{E}\left[  \mathbf{1}_{X\leq z}f^{\prime}(X)\right]  +\mathbf{E}%
\left[  Xh(X)\right]  +\mathbf{E}\left[  \mathbf{1}_{X\leq z}X\right]
f\left(  z\right)  +\mathbf{E}\left[  \mathbf{1}_{X>z}Xf(X)\right] \nonumber\\
&  \geq\bar{\Phi}\left(  z\right)  +\mathbf{E}\left[  \mathbf{1}_{X\leq
z}f^{\prime}\left(  X\right)  \left(  -1+\left\langle DX;-DL^{-1}%
X\right\rangle \right)  \right]  +\mathbf{E}\left[  \mathbf{1}_{X\leq
z}X\right]  f\left(  z\right)  +\mathbf{E}\left[  \mathbf{1}_{X>z}%
Xf(X)\right]  . \label{finaleq}%
\end{align}

When $x\leq z$, we can use the formula in Step 1 to prove that $f^{\prime
}\left(  x\right)  \geq0$. Indeed this is trivial when $x\geq0$, while when
$x<0$, it is proved as follows: for $x=-y<0$, and using the upper bound in
(\ref{vieuxtruc})%
\[
f^{\prime}\left(  x\right)  =\bar{\Phi}\left(  z\right)  \left(  1+\sqrt{2\pi
}\left(  1-\bar{\Phi}\left(  x\right)  \right)  xe^{x^{2}/2}\right)
=\bar{\Phi}\left(  z\right)  \left(  1-\sqrt{2\pi}\bar{\Phi}\left(  y\right)
ye^{y^{2}/2}\right)  \geq0.
\]
By the lower bound hypothesis (\ref{lbcond}), we also have positivity of
$-1+\left\langle DX;-DL^{-1}X\right\rangle $. Thus the second term on the
right-hand side of (\ref{finaleq}) is non-negative. In other words we have%
\begin{align}
\mathbf{P}\left[  X>z\right]   &  \geq\bar{\Phi}\left(  z\right)
+\mathbf{E}\left[  \mathbf{1}_{X\leq z}X\right]  f\left(  z\right)
+\mathbf{E}\left[  \mathbf{1}_{X>z}Xf(X)\right] \label{finaleq3}\\
&  =:\bar{\Phi}\left(  z\right)  +A \label{finaleq2}%
\end{align}

The sum of the last two terms on the right-hand side of (\ref{finaleq3}),
which we call $A$, can be rewritten as follows, using the fact that
$\mathbf{E}\left[  X\right]  =0$:%
\begin{align*}
A  &  :=\mathbf{E}\left[  \mathbf{1}_{X\leq z}X\right]  f\left(  z\right)
+\mathbf{E}\left[  \mathbf{1}_{X>z}Xf(X)\right] \\
&  =\mathbf{E}\left[  \mathbf{1}_{X\leq z}X\right]  f\left(  z\right)
+\mathbf{E}\left[  \mathbf{1}_{X>z}X\left(  f(X)-f\left(  z\right)  \right)
\right]  +f\left(  z\right)  \mathbf{E}\left[  \mathbf{1}_{X>z}X\right] \\
&  =\mathbf{E}\left[  \mathbf{1}_{X>z}X\left(  f(X)-f\left(  z\right)
\right)  \right]  .
\end{align*}
This quantity $A$ is slightly problematic since, $f$ being decreasing on
$[z,+\infty)$, we have $A<0$. However, we can write $f(X)-f\left(  z\right)
=f^{\prime}\left(  \xi\right)  \left(  X-z\right)  $ for some random $\xi>z$.
Next we use the lower bound in (\ref{vieuxtruc}) to get that for all $\xi>z$,
\begin{equation}
\left\vert f^{\prime}\left(  \xi\right)  \right\vert =-f^{\prime}\left(
\xi\right)  =\left(  1-\bar{\Phi}\left(  z\right)  \right)  \left(
1-\sqrt{2\pi}\bar{\Phi}\left(  \xi\right)  \xi e^{\xi^{2}/2}\right)
\leq1\cdot\left(  1-\frac{\xi^{2}}{1+\xi^{2}}\right)  =\frac{1}{1+\xi^{2}}.
\label{tinyf'}%
\end{equation}
This upper bound can obviously be further bounded above uniformly by $\left(
1+z^{2}\right)  ^{-1}$, which means that
\[
\left\vert A\right\vert \leq\mathbf{E}\left[  \mathbf{1}_{X>z}X\left(
X-z\right)  \right]  \frac{1}{1+z^{2}}\leq\mathbf{E}\left[  \mathbf{1}%
_{X>z}X^{2}\right]  \frac{1}{1+z^{2}}.
\]
By using this estimate in (\ref{finaleq2}) we finally get%
\begin{equation}
\mathbf{P}\left[  X>z\right]  \geq\bar{\Phi}\left(  z\right)  -\mathbf{E}%
\left[  \mathbf{1}_{X>z}X^{2}\right]  \frac{1}{1+z^{2}}. \label{finaleq4}%
\end{equation}

\emph{Step 3: integrating by parts.} For notational compactness, let $S\left(
z\right)  :=\mathbf{P}\left[  X>z\right]  $. We integrate the last term in
(\ref{finaleq4}) by parts with respect to the positive measure $-dS\left(
x\right)  $. We have, for any $z>0$,%
\begin{align*}
\mathbf{E}\left[  \mathbf{1}_{X>z}X^{2}\right]   &  =-\int_{z}^{\infty}%
x^{2}dS\left(  x\right)  =z^{2}S\left(  x\right)  -\lim_{x\rightarrow+\infty
}x^{2}S\left(  x\right)  +\int_{z}^{\infty}2xS\left(  x\right)  dx\\
&  \leq z^{2}S\left(  z\right)  +\int_{z}^{\infty}2xS\left(  x\right)  dx.
\end{align*}
The conclusion (\ref{finaleq4}) from the previous step now implies%
\[
S\left(  z\right)  \geq\bar{\Phi}\left(  z\right)  -\frac{z^{2}}{1+z^{2}%
}S\left(  z\right)  -\frac{1}{1+z^{2}}\int_{z}^{\infty}2xS\left(  x\right)
dx,
\]
which finishes the proof of the theorem's lower bound.\vspace{0.2cm}

\emph{Step 4: Upper bound}. The proof of the upper bound is similar to, not
symmetric with, and less delicate than, the proof of the lower bound. Indeed,
we can take advantage of a projective positivity result on the inner product
of $DX$ and $-DL^{-1}X$, namely \cite[Proposition 3.9]{NP} which says that
$\mathbf{E}\left[  \left\langle DX;-DL^{-1}X\right\rangle |X\right]  \geq0$.
This allows us to avoid the need for any additional moment assumptions. Using
Lemma \ref{lemkey} directly with the function $h=f$, which is continuous, and
differentiable everywhere except at $x=z$, we have%
\begin{align}
&  \mathbf{P}\left[  X>z\right] \nonumber\\
&  =\bar{\Phi}\left(  z\right)  -\mathbf{E}\left[  f^{\prime}\left(  X\right)
\right]  +\mathbf{E}\left[  f^{\prime}\left(  X\right)  \left\langle
DX;-DL^{-1}X\right\rangle \right] \nonumber\\
&  =\bar{\Phi}\left(  z\right)  +\mathbf{E}\left[  \mathbf{1}_{X\leq
z}f^{\prime}\left(  X\right)  \left(  -1+\left\langle DX;-DL^{-1}%
X\right\rangle \right)  \right]  +\mathbf{E}\left[  \mathbf{1}_{X>z}f^{\prime
}\left(  X\right)  \left(  -1+\left\langle DX;-DL^{-1}X\right\rangle \right)
\right] \nonumber\\
&  \leq\bar{\Phi}\left(  z\right)  +\mathbf{E}\left[  \mathbf{1}%
_{X>z}f^{\prime}\left(  X\right)  \left(  -1+\left\langle DX;-DL^{-1}%
X\right\rangle \right)  \right]  \label{apositiver}%
\end{align}
where the last inequality simply comes from the facts that by hypothesis
$-1+\left\langle DX;-DL^{-1}X\right\rangle $ is negative, and when $x\leq z$,
$f^{\prime}\left(  x\right)  \geq0$ (see previous step for proof of this
positivity). It remains to control the term in (\ref{apositiver}): since
$\mathbf{E}\left[  \left\langle DX;-DL^{-1}X\right\rangle |X\right]  \geq0$,
and using the negativity of $f^{\prime}$ on $x>z$,%
\begin{align*}
\mathbf{E}\left[  \mathbf{1}_{X>z}f^{\prime}\left(  X\right)  \left(
-1+\left\langle DX;-DL^{-1}X\right\rangle \right)  \right]   &  =\mathbf{E}%
\left[  \mathbf{1}_{X>z}f^{\prime}\left(  X\right)  \mathbf{E}\left[  \left(
-1+\left\langle DX;-DL^{-1}X\right\rangle \right)  |X\right]  \right] \\
&  \leq-\mathbf{E}\left[  \mathbf{1}_{X>z}f^{\prime}\left(  X\right)  \right]
=\mathbf{E}\left[  \mathbf{1}_{X>z}\left\vert f^{\prime}\left(  X\right)
\right\vert \right]
\end{align*}
This last inequality together with the bound on $f^{\prime}$ obtained in
(\ref{tinyf'}) imply%
\[
\mathbf{E}\left[  \mathbf{1}_{X>z}f^{\prime}\left(  X\right)  \left(
-1+\left\langle DX;-DL^{-1}X\right\rangle \right)  \right]  \leq
\mathbf{P}\left[  X>z\right]  \frac{1}{1+z^{2}}.
\]
Thus we have proved that%
\[
\mathbf{P}\left[  X>z\right]  \leq\bar{\Phi}\left(  z\right)  +\mathbf{P}%
\left[  X>z\right]  \frac{1}{1+z^{2}}%
\]
which implies the upper bound of the theorem, finishing its proof.$\vspace
{0.3cm}$
\end{proof}

\begin{proof}
[Proof of Corollary \ref{thmlbcor}]\emph{Proof of Point 2.} One notes first
that by a result in \cite{NV}, condition (\ref{lbcond}) implies that $X$ has a
density, so that $S^{\prime}$ is defined. Then we get%
\begin{align*}
F\left(  z\right)   &  :=\int_{z}^{\infty}x\mathbf{P}\left[  X>z\right]
dx=\int_{z}^{\infty}xS\left(  x\right)  dx\leq\frac{1}{c}\int_{z}^{\infty
}x^{2}\left\vert S^{\prime}\left(  x\right)  \right\vert dx\\
&  =\frac{1}{c}\left(  z^{2}S\left(  z\right)  -\lim_{x\rightarrow\infty}%
x^{2}S\left(  x\right)  +\int_{z}^{\infty}2xS\left(  x\right)  dx\right) \\
&  \leq\frac{1}{c}\left(  z^{2}S\left(  z\right)  +2F\left(  x\right)
\right)
\end{align*}
which implies%
\[
F\left(  z\right)  \leq\frac{1}{c-2}z^{2}S\left(  z\right)  .
\]
With the lower bound conclusion of Theorem \ref{thmlb}, we obtain%
\[
S\left(  z\right)  \geq\bar{\Phi}\left(  z\right)  -\frac{2z^{2}}{1+z^{2}%
}\frac{1}{c-2}S\left(  z\right)
\]
which is equivalent to the statement of Point 2.\vspace{0.1in}

\emph{Proof of Point 3.} From Theorem \ref{thmlb}, we have for large $z$,%
\begin{align*}
S\left(  z\right)   &  \geq\bar{\Phi}\left(  z\right)  -\frac{1}{1+z^{2}}%
\int_{z}^{\infty}2x^{1-c}x^{c}S\left(  x\right)  dx\\
&  \geq\bar{\Phi}\left(  z\right)  -\frac{1}{1+z^{2}}\sup_{x>z}\left[
x^{c}S\left(  x\right)  \right]  \int_{z}^{\infty}2x^{1-c}dx=\bar{\Phi}\left(
z\right)  -\frac{z^{2-c}2/\left(  c-2\right)  }{1+z^{2}}\sup_{x>z}\left[
x^{c}S\left(  x\right)  \right]
\end{align*}
which implies%
\[
\sup_{x>z}\left[  x^{c}S\left(  x\right)  \right]  \left(  \frac{2}%
{c-2}+1\right)  \geq z^{c}\bar{\Phi}\left(  z\right)
\]
which is equivalent to the first part of the statement of Point 3, the second
part following from the fact that $z^{c}\bar{\Phi}\left(  z\right)  $ is
decreasing for large $z$.\vspace{0.1in}

\emph{Proof of Point 1.} As in Point 2, we define $F\left(  z\right)
:=\int_{z}^{\infty}x\mathbf{P}\left[  X>z\right]  dx$ but this time, we do not
need to use the density of $X$. Instead, we note that by integration by
parts,
\[
\mathbf{E}\left[  X^{2}\mathbf{1}_{X>z}\right]  =z^{2}S\left(  z\right)
-\lim_{x\rightarrow\infty}x^{2}S\left(  x\right)  +2F\left(  z\right)  ,
\]
Since $X\in L^{2}\left(  \Omega\right)  $, $\lim_{x\rightarrow\infty}%
x^{2}S\left(  x\right)  \leq\lim_{x\rightarrow\infty}\mathbf{E}\left[
X^{2}\mathbf{1}_{X>x}\right]  =0$ and therefore $\mathbf{E}\left[
X^{2}\mathbf{1}_{X>0}\right]  =2F\left(  0\right)  .$ Since our hypothesis is
invariant with respect to changing $X$ into $-X$, we also get $\mathbf{E}%
\left[  X^{2}\mathbf{1}_{X<0}\right]  =2F\left(  0\right)  .$ Therefore%
\[
Var\left[  X\right]  =4F\left(  0\right)  .
\]
Thus we only need to find a lower bound on $F\left(  0\right)  $.

Now let $p=1,2$, and define%
\[
a_{p}:=\mathbf{E}\left[  X^{p}\mathbf{1}_{X>0}\right]  .
\]
Thus we have $a_{2}=F\left(  0\right)  $ and $a_{1}=\mathbf{E}\left[
X_{+}\right]  $. Now, by integration by parts, $a_{1}\geq\int_{0}^{\infty
}S\left(  x\right)  dx$. Using Theorem \ref{thmlb}, we thus get%
\begin{align*}
a_{1}  &  \geq\int_{0}^{\infty}\bar{\Phi}\left(  x\right)  dx-2\int
_{0}^{\infty}\frac{1}{1+x^{2}}F\left(  x\right)  dx\geq\int_{0}^{\infty}%
\bar{\Phi}\left(  x\right)  dx-2F\left(  0\right)  \int_{0}^{\infty}\frac
{1}{1+x^{2}}dx\\
&  =\frac{1}{\sqrt{2\pi}}-\pi a_{2}.
\end{align*}
Since $a_{2}\geq a_{1}^{2}$, this proves that $F\left(  0\right)  =a_{2}%
\geq\left(  \sqrt{1+2\sqrt{2\pi}}-1\right)  ^{2}/\left(  2\pi\right)  ^{2}$
and the conclusion follows.\vspace*{0.1in}
\end{proof}

\begin{proof}
[Proof of Theorem \ref{ThmA}]By Proposition \ref{nounouvivi}, with
$L=\mathbf{E}\left\vert X\right\vert /2$, for $x\in(a,b)$,
\[
\rho\left(  x\right)  =LA\left(  x\right)  /g\left(  x\right)  .
\]
By definition we also get $A^{\prime}\left(  x\right)  =-xA\left(  x\right)
/g\left(  x\right)  =-xL^{-1}\rho\left(  x\right)  $, and thus
\[
\mathbf{P}\left[  X>x\right]  =:S\left(  x\right)  =L\int_{x}^{+\infty}%
\frac{-A^{\prime}}{y}dy=L\left(  \frac{A\left(  x\right)  }{x}-\lim
_{y\rightarrow\infty}\frac{A\left(  y\right)  }{y}-\int_{x}^{+\infty}%
\frac{A\left(  y\right)  }{y^{2}}dy\right)  .
\]
Since $g$ is non-negative, $A$ is bounded, and the term $\lim_{y\rightarrow
\infty}A\left(  y\right)  /y$ is thus zero. Equality (\ref{tailA}) follows
immediately, proving the theorem.\vspace*{0.1in}
\end{proof}

\begin{proof}
[Proof of Corollary \ref{corA}]\emph{Proof of inequality (\ref{corAzero})}.
From Theorem \ref{ThmA}, with $L=\mathbf{E}\left\vert X\right\vert /2$, and
$k>1$, and using the fact that $A$ is decreasing, we can write%
\begin{align*}
S\left(  x\right)   &  =:\mathbf{P}\left[  X>x\right]  =L\left(
\frac{A\left(  x\right)  }{x}-\int_{x}^{kx}\frac{A\left(  y\right)  }{y^{2}%
}dy-\int_{kx}^{+\infty}\frac{A\left(  y\right)  }{y^{2}}dy\right) \\
&  \geq L\left(  \frac{A\left(  x\right)  }{x}-\frac{A\left(  x\right)  }%
{x}\left(  1-\frac{1}{k}\right)  -\frac{A\left(  kx\right)  }{kx}\right) \\
&  =L\frac{A\left(  x\right)  }{x}\frac{1}{k}\left(  1-\frac{A\left(
kx\right)  }{A\left(  x\right)  }\right)  .
\end{align*}
It is now just a matter of using the assumption $g\left(  x\right)  \leq
c^{\prime}x^{2}$ to control $A\left(  kx\right)  /A\left(  x\right)  $. We
have for large $x$,%
\[
\frac{A\left(  kx\right)  }{A\left(  x\right)  }=\exp\left(  -\int_{x}%
^{kx}\frac{ydy}{g\left(  y\right)  }\right)  \leq\exp\left(  -\frac
{1}{c^{\prime}}\log k\right)  =k^{-1/c^{\prime}}.
\]
\vspace*{0.1in}This proves%
\[
S\left(  x\right)  \geq L\frac{A\left(  x\right)  }{x}\frac{1}{k}\left(
1-k^{-1/c^{\prime}}\right)  .
\]
The proof is completed simply by optimizing this over the values of $k>1$: the
function $k\mapsto\left(  1-k^{-1/c^{\prime}}\right)  /k$ reaches its maximum
of $\left(  c^{\prime}\right)  ^{c^{\prime}}\left(  1+c^{\prime}\right)
^{-c^{\prime}-1}$ at $\left(  1+1/c^{\prime}\right)  ^{c^{\prime}}$%
.\vspace*{0.1in}

\emph{Proof of Points 1, 2, 3, and 4.} Point 1 is immediate since
\thinspace$g\left(  x\right)  \geq1$ implies $A\left(  x\right)  \geq
\exp\left(  -x^{2}/2\right)  $. Similarly, for Point 2, we have%
\[
A\left(  x\right)  \geq\exp\left(  -\int_{0}^{y_{0}}\frac{ydy}{g\left(
y\right)  }\right)  \exp\left(  -\frac{1}{c^{\prime\prime}}\int_{y_{0}}%
^{x}\frac{dy}{y}\right)  =cst~x^{-1/c^{\prime\prime}},
\]
and Point 3 follows in the same fashion. Point 4 is shown identically by
reversing all inequalities, conclusing the proof of the
Corollary.\emph{\vspace*{0.1in}}
\end{proof}

\begin{proof}
[Proof of Corollary \ref{corlblast}]This is in fact a corollary of the proof
of Theorem \ref{thmlb}. At the end of Step 2 therein, in (\ref{finaleq4}), we
prove that (\ref{lbcond}), the lower bound assumption $\left\langle
DX;-DL^{-1}X\right\rangle =:G\geq1$, implies%
\begin{equation}
S\left(  z\right)  \geq\bar{\Phi}\left(  z\right)  -\mathbf{E}\left[
\mathbf{1}_{X>z}X\left(  X-z\right)  \right]  \frac{1}{1+z^{2}}%
.\label{4proofpoint1}%
\end{equation}
Let us investigate the term $B:=\mathbf{E}\left[  \mathbf{1}_{X>z}X\left(
X-z\right)  \right]  $. Using Lemma \ref{lemkey} with the function $h\left(
x\right)  =\left(  x-z\right)  \mathbf{1}_{x>z}$, we have
\[
B=\mathbf{E}\left[  \mathbf{1}_{X>z}G\right]  .
\]
Now use the upper bound assumption on $G$: we get, for all $z\geq z_{0}$,%
\begin{align}
B &  \leq c^{\prime}\mathbf{E}\left[  \mathbf{1}_{X>z}X^{2}\right]
=c^{\prime}\mathbf{E}\left[  \mathbf{1}_{X>z}X\left(  X-z\right)  \right]
+c^{\prime}z\mathbf{E}\left[  \mathbf{1}_{X>z}X\right]  \nonumber\\
&  =c^{\prime}B+c^{\prime}z\mathbf{E}\left[  \mathbf{1}_{X>z}X\right]
=c^{\prime}B+c^{\prime}z\left(  zS\left(  z\right)  +\int_{z}^{\infty}S\left(
x\right)  dx\right)  ,\label{Bub}%
\end{align}
where we used integration by parts for the last inequality. Integration by
parts also directly shows:%
\[
B=2\int_{z}^{\infty}xS\left(  s\right)  -z\int_{z}^{\infty}S\left(  x\right)
dx.
\]
Introducing the following additional notation: $D:=z\int_{z}^{\infty}S\left(
x\right)  dx$ and $E:=2\int_{z}^{\infty}xS\left(  s\right)  $, we see that
$B=E-D$ and also that $E\geq2D$. Moreover, in (\ref{Bub}), we also recognize
the appearance of $D$. Therefore we have%
\[
\left(  E-D\right)  \left(  1-c^{\prime}\right)  \leq c^{\prime}D+c^{\prime
}z^{2}S\left(  z\right)  \leq\left(  c^{\prime}/2\right)  E+c^{\prime}%
z^{2}S\left(  z\right)
\]
which easily implies%
\[
B\leq E\leq2c^{\prime}z^{2}S\left(  z\right)  .
\]
From (\ref{4proofpoint1}), we now get%
\[
S\left(  z\right)  \geq\bar{\Phi}\left(  z\right)  -\frac{2c^{\prime}z^{2}%
}{1+z^{2}}S\left(  z\right)
\]
from which we obtain, for $z\geq z_{0}$%
\[
S\left(  z\right)  \geq\frac{1+z^{2}}{1+\left(  2c^{\prime}+1\right)  z^{2}%
}\bar{\Phi}\left(  z\right)  ,
\]
finishing the proof of the corollary.\vspace*{0.1in}
\end{proof}

\section{Fluctuation exponent and deviations for polymers in Gaussian
environments\label{FLUCTU}}

Lemma \ref{lemDLX} provides a way to calculate $\left\langle DX;-DL^{-1}%
X\right\rangle $ in order to check, for instance, whether it is bounded below
by a positive constant $c^{2}$. If $c^{2}\neq1$, because of the bilinearity of
Condition (\ref{lbcond}), one only needs to consider $X/c$ instead of $X$ in
order to apply Theorem \ref{thmlb}, say. To show that such a tool can be
applied with ease in a non-trivial situation, we have chosen the issue of
fluctuation exponents for polymers in random environments.

We can consider various polymer models in random environments constructed by
analogy with the so-called stochastic Anderson models (see \cite{RT} and
\cite{FV}). A polymer's state space $R$ can be either $\mathbf{R}^{d}$ or the
$d$-dimensional torus $\mathbf{S}^{d}$, or also $\mathbf{Z}^{d}$ or
$\mathbf{Z}/p\mathbf{Z}$; we could also use any Lie group for $R$. We can
equip $R$ with a Markov process $b$ on $[0,\infty)$ whose infinitesimal
generator, under the probability measure $P_{b}$, is the Laplace(-Beltrami)
operator or the discrete Laplacian. Thus for instance, $b$ is Brownian motion
when $R=\mathbf{R}^{d}$, or is the simple symmetric random walk when
$R=\mathbf{Z}^{d}$; it is the image of Brownian motion by the imaginary
exponential map when $R=\mathbf{S}^{1}$. To simplify our exposition, we can
and will typically assume, unless explicitly stated otherwise, that
$R=\mathbf{R}$, but our constructions and proofs can be adapted to any of the
above choices.

\subsection{The random environment\label{RE}}

Let $W$ be a Gaussian field on $\mathbf{R}_{+}\times\mathbf{R}$ which is
homogeneous in space and is Brownian in time for fixed space parameter: the
covariance of $W$ is thus%
\[
\mathbf{E}\left[  W\left(  t,x\right)  W\left(  s,y\right)  \right]
=\min\left(  s,t\right)  Q\left(  x-y\right)  ,
\]
for some homogeneous covariance function $Q$ on $\mathbf{R}$. We assume that
$Q$ is continuous and that its Fourier transform $\hat{Q}$ is a measure with a
density also denoted by $\hat{Q}$. Note that $\hat{Q}$ is positive and
$\left\vert Q\right\vert $ is bounded by $Q\left(  0\right)  $. The field $W$
can be represented using a very specific isonormal Gaussian process: there
exists a white noise measure $M$ on $\mathbf{R}_{+}\times\mathbf{R}$ such that%
\[
W\left(  t,x\right)  =\int_{0}^{t}\int_{\mathbf{R}}M\left(  ds,d\lambda
\right)  ~\sqrt{\hat{Q}\left(  \lambda\right)  }~e^{i\lambda\cdot x},
\]
where the above integral is the Wiener integral of $(s,\lambda)\mapsto
\mathbf{1}_{[0,t]}\left(  s\right)  \sqrt{\hat{Q}\left(  \lambda\right)
}~e^{i\lambda\cdot x}$ with respect to $M$. This $M$ is an isonormal Gaussian
process whose Hilbert space is $H=L^{2}(\mathbf{R}_{+}\times\mathbf{R})$.
Malliavin derivatives relative to $M$ will take their parameters $\left(
s,\lambda\right)  $ in $\mathbf{R}_{+}\times\mathbf{R}$, and inner products
and norms are understood in $H$. There is a slight possibility of notational
confusion since now the underlying isonormal Gaussian process is called $M$,
with the letter $W$ -- the traditional name of the polymer potential field --
being a linear transformation of $M$.

The relation between $D$ and $W$ is thus that $D_{s,\lambda}W\left(
t,x\right)  =e^{i\lambda\cdot x}\sqrt{\hat{Q}\left(  \lambda\right)
}\mathbf{1}_{[0,t]}\left(  s\right)  $. We will make use of the following
similarly important formulas: for any measurable function $f$:
\begin{align}
D_{s,\lambda}\int_{\mathbf{R}}\int_{0}^{t}M\left(  ds,d\lambda\right)
\sqrt{\hat{Q}\left(  \lambda\right)  }e^{i\lambda\cdot f\left(  s\right)  }
&  =\sqrt{\hat{Q}\left(  \lambda\right)  }e^{i\lambda\cdot f\left(  s\right)
};\label{DM}\\
\int_{0}^{t}\int_{\mathbf{R}}ds~\hat{Q}\left(  \lambda\right)  d\lambda
~e^{i\lambda\cdot f\left(  s\right)  }  &  =\int_{0}^{t}Q\left(  f\left(
s\right)  \right)  ds \label{QhatQ}%
\end{align}
Quantitatively, this calculation will be particularly useful as a key to easy
upper bounds by noting the fact that $\max_{x\in\mathbf{R}}Q\left(  x\right)
=Q\left(  0\right)  $ is positive and finite. On the other hand, if $Q$ is
positive and non-degenerate, lower bounds will easily follow.

In order to use the full strength of our estimates in Section (\ref{MAIN}), we
will also allow $Q$ to be inhomogeneous, and in particular, unbounded. This is
easily modeled by specifying that
\[
W\left(  t,x\right)  =\int_{0}^{t}\int_{\mathbf{R}}M\left(  ds,d\lambda
\right)  ~q\left(  \lambda,x\right)
\]
where $\int_{\mathbf{R}}q\left(  \lambda,x\right)  q\left(  \lambda,y\right)
=Q\left(  x,y\right)  $. Calculations similar to (\ref{DM}) and (\ref{QhatQ})
then ensue.

We may also devise polymer models in non-Gaussian environments by considering
$W$ as a mixture of Gaussian fields. This means that we consider $Q$ to be
random itself, with respect to some separate probability space. We will place
only weak restrictions on this randomness: under a probability measure
$\mathcal{P}$, we assume $\hat{Q}$ is a non-negative random field on
$\mathbf{R}$, integrable on $\mathbf{R}$, with $Q\left(  0\right)
=\int_{\mathbf{R}}\hat{Q}\left(  \lambda\right)  d\lambda$ integrable with
respect to $\mathcal{P}$.

\subsection{The polymer and its fluctuation exponent}

Let the Hamiltonian of a path $b$ in $\mathbf{R}$ under the random environment
$W$ be defined, up to time $t$, as%
\[
H_{t}^{W}\left(  b\right)  =\int_{0}^{t}W\left(  ds,b_{s}\right)
=\int_{\mathbf{R}}\int_{0}^{t}M\left(  ds,d\lambda\right)  e^{i\lambda\cdot
b_{s}}.
\]
Since $W$ is a symmetric field, we have omitted the traditional negative sign
in front of the definition of $H_{t}^{W}$. For fixed path $b$, this
Hamiltonian $H_{t}^{W}\left(  b\right)  $ is a Gaussian random variable w.r.t
$W$.

The polymer $\tilde{P}_{b}$ based on $b$ in the random Hamiltonian $H^{W}$ is
defined as the law whose Radon-Nykodym derivative with respect to $P_{b}$ is
$Z_{t}\left(  b\right)  /E_{b}\left[  Z_{t}\left(  b\right)  \right]  $ where%
\[
Z_{t}\left(  b\right)  :=\exp H_{t}^{W}\left(  b\right)  .
\]
We use the notation $u$ for the partition function (normalizing constant) for
this measure:%
\[
u\left(  t\right)  :=E_{b}\left[  Z_{t}\left(  b\right)  \right]  .
\]
The process $u\left(  t\right)  $ is of special importance: its behavior helps
understand the behavior of the whole measure $\tilde{P}_{b}$. When $b_{0}=x$
instead of $0$, the resulting $u\left(  t,x\right)  $ is the solution of a
stochastic heat equation with multiplicative noise potential $W$, and the
logarithm of this solution solves a so-called stochastic Burgers equation.

It is known that $t^{-1}\log u\left(  t\right)  $ typically converges almost
surely to a non-random constant $\lambda$ called the almost sure Lyapunov
exponent of $u$ (see \cite{RT} and references therein for instance; the case
of random $Q$ is treated in \cite{KVV}; the case of inhomogeneous $Q$ on
compact space is discussed in \cite{FV}). The speed of concentration of $\log
u\left(  t\right)  $ around its mean has been the subject of some debate
recently. One may consult \cite{BTV} for a discussion of the issue and its
relation to the so-called wandering exponent in non-compact space. The
question is to evaluate the asymptotics of $\log u\left(  t\right)
-\mathbf{E}\left[  \log u\left(  t\right)  \right]  $ for large $t$, or to
show that it is roughly equivalent to $t^{\chi}$, where $\chi$ is called the
\emph{fluctuation exponent}. The most widely used measure of this behavior is
the asymptotics of $Var\left[  \log u\left(  t\right)  \right]  $. Here we
show that if space is compact with positive correlations, or if $W$ has
infinite spatial correlation range, then $Var\left[  \log u\left(  t\right)
\right]  $ behaves as $t$, i.e. the fluctuation exponent $\chi$ is $1/2$. This
result is highly robust to the actual distribution of $W$, since it does not
depend on the law of $Q$ under $\mathcal{P}$ beyond its first moment. We also
provide a class of examples in which $H^{W}$ is replaced by a non-linear
functional of $W$, and yet the fluctuation exponent, as measured by the power
behavior of $Var\left[  \log u\left(  t\right)  \right]  $, is still $1/2$.

We hope that our method will stimulate the study of this problem for other
correlation structures not covered by the theorem below, in particular in
infinite space when the correlation range of $W$ is finite or decaying at a
certain speed at infinity, or in the case of space-time white-noise in
discrete space, i.e. when the Brownian motions $\left\{  W\left(
\cdot,x\right)  :x\in\mathbf{Z}^{d}\right\}  $ form an IID family. We
conjecture that $\chi$ will depend on the decorrelation speed of $W$. It is at
least believed by some that in the case of space-time white noise, $\chi
<1/2$.\bigskip

The starting point for studying $Var\left[  \log u\left(  t\right)  \right]  $
is the estimation of the function $g$ relative to the random variable $\log
u\left(  t\right)  =\log E_{b}\left[  \exp H_{t}^{W}\left(  b\right)  \right]
$. Here because the integral $H_{t}^{W}\left(  b\right)  =\int_{0}^{t}W\left(
ds,b_{s}\right)  $ has to be understood as $\int_{0}^{t}\int_{\mathbf{R}%
}M\left(  ds,d\lambda\right)  \sqrt{\hat{Q}\left(  \lambda\right)
}e^{i\lambda\cdot b_{s}}$, we must calculate the Malliavin derivative with
parameters $r$ and $\lambda$. We will use the consequence of Mehler's formula
described in Lemma \ref{lemDLX} of Section \ref{PaulTools}. More specifically,
we have the following.

\begin{lemma}
\label{deewaimee}Assume $Q$ is homogeneous. Let%
\[
Y:=\frac{\log u\left(  t\right)  -\mathbf{E}\log u\left(  t\right)  }{\sqrt
{t}}.
\]
Then
\[
D_{s,\lambda}Y=\frac{1}{\sqrt{t}}\frac{1}{u\left(  t\right)  }E_{b}\left[
\sqrt{\hat{Q}\left(  \lambda\right)  }e^{i\lambda\cdot b_{s}}e^{H_{t}%
^{W}\left(  b\right)  }\right]  ,
\]
and
\begin{equation}
\left\langle DY,-DL^{-1}Y\right\rangle =\frac{1}{2t}\int_{-\pi/2}^{\pi
/2}\left\vert \sin\theta\right\vert d\theta~\mathbf{E}^{\prime}E_{b,\bar{b}%
}\left[  \int_{0}^{t}ds~Q\left(  b_{s}-\bar{b}_{s}\right)  \frac{\exp
H_{t}^{W}\left(  b\right)  }{u\left(  t\right)  }\frac{\exp H_{t}^{R_{\theta
}W}\left(  \bar{b}\right)  }{R_{\theta}u\left(  t\right)  }\right]  .
\label{deewai}%
\end{equation}
where $E_{b,\bar{b}}$ is the expectation w.r.t. two independent copies $b$ and
$\bar{b}$ of Brownian motion, and $R_{\theta}W$ was defined in (\ref{Rtheta}).
When $Q$ is inhomogeneous, the above formula still holds, with $Q\left(
b_{s}-\bar{b}_{s}\right)  $ replaced by $Q\left(  b_{s},\bar{b}_{s}\right)  $.
\end{lemma}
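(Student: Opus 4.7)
The plan is to compute $DY$ and $-DL^{-1}Y$ separately --- the first directly from the chain rule, the second by applying the Mehler-type formula of Lemma \ref{lemDLX} --- and then combine them in the $H$-inner product, reducing the $\lambda$-integral via Fourier inversion of $\hat{Q}$.

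First I would compute $D_{s,\lambda}Y$. Since the constant $\mathbf{E}\log u(t)$ is killed by $D$, the chain rule on $\log$ gives $D_{s,\lambda}Y = (\sqrt{t}\,u(t))^{-1}\,D_{s,\lambda} u(t)$. Interchanging $D_{s,\lambda}$ with the path expectation $E_b$ (legitimate by uniform integrability, since $H_t^W(b)$ is Gaussian with variance bounded by $tQ(0)$ uniformly in $b$) and differentiating the exponential, one gets $D_{s,\lambda} u(t) = E_b[e^{H_t^W(b)}\,D_{s,\lambda} H_t^W(b)]$. Formula (\ref{DM}) then yields $D_{s,\lambda} H_t^W(b) = \sqrt{\hat{Q}(\lambda)}\,e^{i\lambda b_s}\,\mathbf{1}_{[0,t]}(s)$, which gives the first claim of the lemma.

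Next I would apply Lemma \ref{lemDLX}. The operator $R_\theta$ replaces $M$ by $M\cos\theta + M'\sin\theta$, and since $H_t^W(b)$ is linear in $M$, this forces $H_t^{R_\theta W}(b) = \cos\theta\,H_t^W(b) + \sin\theta\,H_t^{W'}(b)$ and $R_\theta u(t) = E_b[\exp H_t^{R_\theta W}(b)]$. Repeating the chain-rule calculation but with $D'$, which sees only $W'$, annihilates the $\cos\theta$ term and produces an overall factor of $\sin\theta$:
\[
D'_{s,\lambda}(R_\theta Y) = \frac{\sin\theta}{\sqrt{t}\,R_\theta u(t)}\,E_{\bar b}\!\left[\sqrt{\hat{Q}(\lambda)}\,e^{i\lambda \bar b_s}\,e^{H_t^{R_\theta W}(\bar b)}\right]\mathbf{1}_{[0,t]}(s),
\]
where $\bar b$ is an independent copy of the path. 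Lemma \ref{lemDLX} then expresses $-D_{s,\lambda}L^{-1}Y$ as the weighted $\theta$-integral of this quantity.

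Finally I would take the $H$-inner product $\langle DY, -DL^{-1}Y\rangle = \int_0^\infty\!\int_\mathbf{R} D_{s,\lambda}Y \cdot (-D_{s,\lambda}L^{-1}Y)\,ds\,d\lambda$, using Fubini to interchange all expectations and integrals. The indicators cut the $s$-range to $[0,t]$, the product $sgn(\theta)\sin\theta$ becomes $|\sin\theta|$, and the two path expectations merge into $E_{b,\bar b}$. All remaining $\lambda$-dependence reduces to $\hat{Q}(\lambda)\,e^{i\lambda(b_s - \bar b_s)}$, which by Fourier inversion integrates in $\lambda$ to $Q(b_s - \bar b_s)$; this produces (\ref{deewai}). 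In the inhomogeneous case the same computation replaces $\sqrt{\hat{Q}(\lambda)}\,e^{i\lambda x}$ by $q(\lambda,x)$, and the $\lambda$-integral $\int q(\lambda,b_s)q(\lambda,\bar b_s)\,d\lambda$ is $Q(b_s,\bar b_s)$ by definition. The main technical obstacle is justifying the repeated interchanges of Malliavin differentiation with path-, $\theta$-, and Lebesgue integration; this is routine given the uniform Gaussian moment bound $\mathbf{E}[\exp(cH_t^W(b))] = \exp(c^2 tQ(0)/2)$ and the integrability of $\hat{Q}$. The sign convention producing $e^{i\lambda(b_s - \bar b_s)}$ rather than $e^{i\lambda(b_s+\bar b_s)}$ comes from the conjugate-symmetric inner product on the complex $L^2$ space $H$ implicit in the representation (\ref{DM}).
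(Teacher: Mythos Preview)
Your proposal is correct and follows essentially the same route as the paper: compute $DY$ by the chain rule and formula (\ref{DM}), compute $-DL^{-1}Y$ via the Mehler-type Lemma~\ref{lemDLX} after identifying $R_\theta Y$, and then take the $H$-inner product, collapsing the $\lambda$-integral via (\ref{QhatQ}). Your treatment is in fact slightly more explicit than the paper's about the $\sin\theta$ factor arising from $D'$ acting on $H_t^{R_\theta W}$, about the Fubini-type interchanges, and about the conjugation needed to obtain $e^{i\lambda(b_s-\bar b_s)}$ in the pairing; the paper simply invokes (\ref{QhatQ}) without comment on this last point.
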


\begin{proof}
By formula (\ref{DM}) and the chain rule for Malliavin derivatives, we have
for fixed $b$,%
\[
D_{s,\lambda}\left(  e^{H_{t}^{W}\left(  b\right)  }\right)  =\sqrt{\hat
{Q}\left(  \lambda\right)  }e^{i\lambda\cdot b_{s}}e^{H_{t}^{W}\left(
b\right)  }%
\]
and therefore by linearity of the expectation $E_{b}$, and the chain rule
again, the first statement of the lemma follows immediately.

Now we investigate $DL^{-1}Y$. To use Lemma \ref{lemDLX} relative to $W$, we
note that the expression for $R_{\theta}Y$ is straightforward, since $Y$ is
defined as a non-random non-linear functional of an expression involving $b$
and $W$ with the latter appearing linearly via $H_{t}^{W}\left(  b\right)  $;
in other words, $R_{\theta}Y$ is obtained by replacing $H_{t}^{W}\left(
b\right)  $ by $H_{t}^{R_{\theta}W}\left(  b\right)  $, so we simply have%
\[
R_{\theta}Y=\frac{\log E_{b}\left[  \exp\left(  H_{t}^{W}\left(  b\right)
\cos\theta+H_{t}^{W^{\prime}}\left(  b\right)  \sin\theta\right)  \right]
-\mathbf{E}\log u\left(  t\right)  }{\sqrt{t}}.
\]
Thus by Lemma \ref{lemDLX},%
\[
-D_{s,\lambda}L^{-1}Y=\int_{-\pi/2}^{\pi/2}d\theta~\frac{sgn\left(
\theta\right)  }{2\sqrt{t}}E_{b}\mathbf{E}^{\prime}\left[  \sqrt{\hat
{Q}\left(  \lambda\right)  }e^{i\lambda\cdot b_{s}}\sin\left(  \theta\right)
\frac{\exp H_{t}^{R_{\theta}W}\left(  \bar{b}\right)  }{R_{\theta}u\left(
t\right)  }\right]  .
\]
We may thus calculate explicitly the inner product $\left\langle
DX,-DL^{-1}X\right\rangle $, using equation (\ref{QhatQ}), obtaining the
second announced result (\ref{deewai}). The proof of the first statement is
identical in structure to the above arguments. The last statement is obtained
again using identical arguments.
\end{proof}

\bigskip

It is worth noting that a similar expression as for $\left\langle
DY,-DL^{-1}Y\right\rangle $ can be obtained for $\left\Vert DY\right\Vert
^{2}$. Using the same calculation technique as in the above proof, we have%
\begin{align}
\left\Vert DY\right\Vert ^{2} &  =\left\Vert DY\right\Vert _{L^{2}\left(
[0,t]\times\mathbf{R}\right)  }^{2}=\frac{1}{t}E_{b,\bar{b}}\left[
\frac{e^{H_{t}^{W}\left(  b\right)  }e^{H_{t}^{W}\left(  \bar{b}\right)  }%
}{u^{2}\left(  t\right)  }\int_{0}^{t}dsQ\left(  b_{s},\bar{b}_{s}\right)
\right]  \nonumber\\
&  =\frac{1}{t}\tilde{E}_{b,\bar{b}}\left[  \int_{0}^{t}dsQ\left(  b_{s}%
,\bar{b}_{s}\right)  \right]  ,\label{overlap}%
\end{align}
where the last expression involves the expectation w.r.t. the polymer measure
$\tilde{P}$ itself, or rather w.r.t. the product measure $d\tilde{P}%
_{b,\bar{b}}=e^{H_{t}^{W}\left(  b\right)  }e^{H_{t}^{W}\left(  \bar
{b}\right)  }u^{-2}\left(  t\right)  dP_{b}\times dP_{\bar{b}}$ of two
independent polymers $\left(  b,\bar{b}\right)  $ in the same random
environment $W$. This measure is called the two-replica polymer measure, and
the quantity $\tilde{E}_{b,\bar{b}}\left[  \int_{0}^{t}dsQ\left(  b_{s}%
,\bar{b}_{s}\right)  \right]  $ is the so-called \emph{replica overlap} for
this polymer. This notion should be familiar to those studying spin glasses
such as the Sherrington-Kirkpatrick model (see \cite{TalagSpin}). The strategy
developped in this article suggests that the expression $\left\langle
DY,-DL^{-1}Y\right\rangle $ may be better suited than the rescaled overlap
$\left\Vert DY\right\Vert ^{2}$ in seeking lower bounds on $\log u$'s concentration.

\begin{notation}
\label{NotaQ}In order to simplify the notation in the next theorem, when $Q$
is not homogeneous, we denote $Q\left(  0\right)  =\max_{x\in\mathbf{R}%
}Q\left(  x,x\right)  $. We then have, in all cases, $Q\left(  0\right)
\geq\left\vert Q\left(  x,y\right)  \right\vert $ for all $x,y\in\mathbf{R}$.
Similarly we denote $Q_{m}=\min_{x,y\in\mathbf{R}}Q\left(  x,y\right)  $. In
the homogeneous case $Q_{m}$ thus coincides with $\min_{x\in\mathbf{R}%
}Q\left(  x\right)  $. When $Q$ is random, assumptions about $Q$ below are to
be understood as being required $\mathcal{P}$-almost surely.
\end{notation}

\begin{definition}
\label{DefChi}To make precise statements about the fluctiation exponent, it is
convenient to use the following definition:%
\[
\chi:=\lim_{t\rightarrow\infty}\frac{\log Var\left[  \log u\left(  t\right)
\right]  }{2\log t}%
\]

\end{definition}

\begin{theorem}
\label{thmpoly}~

\begin{enumerate}
\item Assume $Q\left(  0\right)  $ is finite. We have for all $a,t>0$,%
\begin{equation}
\mathbf{P}\left[  \left\vert \log u\left(  t\right)  -\mathbf{E}\left[  \log
u\left(  t\right)  \right]  \right\vert >a\sqrt{t}\right]  \leq1\wedge
\frac{2Q\left(  0\right)  ^{1/2}}{a\sqrt{2\pi}}\exp\left(  -\frac{a^{2}%
}{2Q\left(  0\right)  }\right)  .\label{LD}%
\end{equation}
If $Q$ is random, one only needs to take an expectation $\mathbf{E}%
_{\mathcal{P}}$ of the above right-hand side.

\item Assume $Q\left(  0\right)  $ is finite. Then for all $t$,%
\begin{equation}
Var\left[  \log u\left(  t\right)  \right]  \leq\left(  \frac{\pi}{2}\right)
^{2}\mathbf{E}_{\mathcal{P}}\left[  Q\left(  0\right)  \right]  t.
\label{LDvar}%
\end{equation}

\item Assume $Q_{m}$ is positive. Then for all $t$,%
\begin{equation}
Var\left[  \log u\left(  t\right)  \right]  \geq K_{u}\mathbf{E}_{\mathcal{P}%
}\left[  Q_{m}\right]  t, \label{LDvar2}%
\end{equation}
where the universal constant $K_{u}\simeq0.21367$ is defined in Point 1 of
Corollary \ref{thmlbcor}.

\item Assume $Q_{m}$ is positive and $Q\left(  0\right)  $ is finite. Then, in
addition to (\ref{LD}), we have for any $K<1$ and all $a$ large,
\begin{equation}
\mathbf{P}\left[  \left\vert \log u\left(  t\right)  -\mathbf{E}\left[  \log
u\left(  t\right)  \right]  \right\vert >a\sqrt{t}\right]  \geq K\frac
{Q_{m}^{1/2}}{a}\exp\left(  -\frac{a^{2}}{2Q_{m}}\right)  \label{LD2}%
\end{equation}

Moreover, the conclusions (\ref{LDvar}) and (\ref{LDvar2}) hold
simultaneously, so that the fluctuation exponent is $\chi=1/2$ as soon as
$Q\left(  0\right)  \in L^{1}\left[  \mathcal{P}\right]  $.
\end{enumerate}
\end{theorem}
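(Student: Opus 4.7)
The strategy is to reduce all four assertions to estimates on the rescaled variable $Y:=(\log u(t)-\mathbf{E}\log u(t))/\sqrt t$, whose Malliavin derivative and $G:=\langle DY,-DL^{-1}Y\rangle$ are given explicitly by Lemma \ref{deewaimee} and by equations (\ref{deewai}) and (\ref{overlap}). The crucial observation is that the pointwise two-sided inequality $Q_m\le Q(b_s,\bar b_s)\le Q(0)$ from Notation \ref{NotaQ} translates, through these Mehler-type formulas, into the almost-sure bounds $Q_m\le G\le Q(0)$ and $\|DY\|^2\le Q(0)$. Each assertion then follows from the corresponding result of Section \ref{MAIN} applied to $Y/\sqrt{Q(0)}$ or $Y/\sqrt{Q_m}$, bilinearity of $D$ and $L^{-1}$ recasting the rescaling as a hypothesis with normalization constant $1$.

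For Points 1 and 2 I would use (\ref{overlap}). Since the two-replica polymer measure $\tilde P_{b,\bar b}$ is a probability measure and $Q(b_s,\bar b_s)\le Q(0)$, one obtains $\|DY\|^2\le Q(0)$ almost surely, and Point 5 of Theorem \ref{thm12} (equivalently Proposition \ref{fundu}) applied to $Y/\sqrt{Q(0)}$ yields (\ref{LDvar}) and a sub-Gaussian tail of the right dispersion. For the sharper Mills-ratio form of (\ref{LD}) I would instead apply Point 4 of Theorem \ref{thm12} to both $\pm Y/\sqrt{Q(0)}$ (they share the same $G$ by bilinearity) and combine it with (\ref{vieuxtruc}). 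When $Q$ is random these arguments are run conditionally on $Q$ and then integrated against $\mathcal{P}$; for (\ref{LDvar}) this requires a small additional check that $Var_{\mathcal{P}}\bigl[\mathbf{E}_W[\log u(t)\,|\,Q]\bigr]$ does not inflate the bound beyond $\mathbf{E}_{\mathcal{P}}[Q(0)]\,t$ times a universal constant.

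For Point 3 I would start from (\ref{deewai}) and use $Q(b_s-\bar b_s)\ge Q_m$. The inner expectation $\mathbf{E}^\prime E_{b,\bar b}[\cdots]$ then factors, after extracting the $ds$-integral, into a product of two partition-function ratios $E_b[e^{H_t^W(b)}/u(t)]=E_{\bar b}[e^{H_t^{R_\theta W}(\bar b)}/R_\theta u(t)]=1$; together with $\int_{-\pi/2}^{\pi/2}|\sin\theta|\,d\theta=2$ and $\int_0^t ds=t$, this immediately yields $G\ge Q_m$ almost surely. Point 1 of Corollary \ref{thmlbcor} applied to $Y/\sqrt{Q_m}$ then gives $Var[Y]\ge K_u Q_m$, hence (\ref{LDvar2}) after multiplying by $t$ (and averaging against $\mathcal{P}$ in the random-$Q$ case).

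For Point 4 the preceding steps combine to give $1\le\tilde G:=\langle D\tilde Y,-DL^{-1}\tilde Y\rangle\le Q(0)/Q_m$ for $\tilde Y:=Y/\sqrt{Q_m}$, so $\tilde g(z)=\mathbf{E}[\tilde G\,|\,\tilde Y=z]$ lies in $[1,Q(0)/Q_m]$. For any $c'\in(0,1)$ the inequality $\tilde g(z)\le c'z^2$ then holds as soon as $z^2\ge Q(0)/(c'Q_m)$, placing us inside the hypotheses of Corollary \ref{corlblast}; its conclusion $\mathbf{P}[\tilde Y>z]\ge\frac{1+z^2}{1+(2c'+1)z^2}\bar\Phi(z)$, composed with the Mills-ratio lower bound in (\ref{vieuxtruc}), rescaled via $z=a/\sqrt{Q_m}$, and symmetrized to a statement about $|Y|$ (noting that $-\tilde Y$ satisfies the same hypotheses), produces (\ref{LD2}). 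The identity $\chi=1/2$ is then immediate from Definition \ref{DefChi} upon taking logarithms of the sandwich (\ref{LDvar})--(\ref{LDvar2}). The principal obstacle I anticipate is in Point 4: the prefactor from Corollary \ref{corlblast} is $(1+z^2)/(1+(2c'+1)z^2)\to 1$ as $c'\downarrow 0$, but the Mills-ratio conversion introduces a factor of $\sqrt{2\pi}$ in the denominator, so one must check carefully that the constant $K$ in (\ref{LD2}) can really be pushed up to any value less than $1$ as claimed; the two-sided passage from $Y$ to $|Y|$ supplies a factor of $2$ which is just enough to reach the desired range.
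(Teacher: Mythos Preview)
Your proposal is correct and follows essentially the same route as the paper: bound $G$ above by $Q(0)$ and below by $Q_m$ via Lemma~\ref{deewaimee}/(\ref{deewai}) (using that the partition-function ratios integrate to $1$), bound $\|DY\|^2\le Q(0)$ via (\ref{overlap}), and then invoke, respectively, Theorem~\ref{thmlb}'s upper bound for Point~1, the classical variance inequality in Point~5 of Theorem~\ref{thm12} for Point~2, Point~1 of Corollary~\ref{thmlbcor} for Point~3, and Corollary~\ref{corlblast} for Point~4. Your two reservations are well-placed and in fact sharper than the paper: the random-$Q$ variance decomposition for (\ref{LDvar}) is not addressed there, and the paper's proof of Point~4 simply asserts ``$K=1/(2c'+1)$ arbitrarily close to $1$'' without converting $\bar\Phi(z)$ to the $\frac{1}{z}e^{-z^2/2}$ form, so the $\sqrt{2\pi}$ discrepancy you flag is real (the two-sided factor of $2$ recovers $2/\sqrt{2\pi}\approx 0.798$, not the full range $K<1$).
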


The hypotheses in Points 3 and 4 of this theorem are satisfied if the state
space $\mathbf{R}$ is replaced by a compact set such as $\mathbf{S}^{1}$, or a
finite set, and $Q$ is positive everywhere: then indeed $Q_{m}>0$. Although
the hypothesis of uniform positivity of $Q$ can be considered as restrictive
for non-compact state space, one notes that there is no restriction on how
small $Q_{m}$ can be compared to $Q\left(  0\right)  $; in this sense, the
slightest persistent correlation of the random environment at distinct sites
results in a fluctuation exponent $\chi=1/2$. In sharp contrast is the case of
space-time white noise in discrete space, which is not covered by our theorem,
since then $Q\left(  x\right)  =0$ except if $x=0$; the main open problem in
discrete space is to prove that $\chi<1/2$ in this white noise case.

In relation to the overlap $\left\Vert DY\right\Vert ^{2}$, we see that under
the assumptions of Point 4 above, $\left\Vert DY\right\Vert $ is also bounded
above and below by non-random multiples of $t^{1/2}$. Hence, while our proofs
cannot use $\left\Vert DY\right\Vert $ directly to prove $\chi=1/2$, the
situation in which we can prove $\chi=1/2$ coincides with a case where the
overlap has the same rough large-time behavior as $Var\left[  \log u\left(
t\right)  \right]  $. We believe this is in accordance with common intuition
about related spin glass models.

More generally, we consider it an important open problem to understand the
precise deviations of $\log u\left(  t\right)  $. The combination of the
sub-Gaussian and super-Gaussian estimates (\ref{LD}) and (\ref{LD2}) are close
to a central limit theorem statement, except for the fact that the rate is not
sharply pinpointed. Finding a sharper rate is an arduous task which will
require a finer analysis of the expression (\ref{deewai}), and should depend
heavily and non-trivially on the correlations of the covariance function, just
as the obtention of a $\chi<1/2$ should depend on having correlations that
decay at infinity sufficiently fast. There, we believe that a fine analysis
will reveal differences between $G$ and the overlap $\left\Vert DY\right\Vert
^{2}$, so that precise quantitative asymptotics of $\log u\left(  t\right)  $
can only be understood by analyzing $G$, not merely $\left\Vert DY\right\Vert
^{2}$. For instance, it is trivial to prove that $\mathbf{E}\left[  G\right]
\leq\mathbf{E}\left[  \left\Vert DY\right\Vert ^{2}\right]  $, and we
conjecture that this inequality is asymptotically strict for large $t$, while
the deviations of $G$ and $\left\Vert DY\right\Vert ^{2}$ themselves from
their respective means are quite small, so that their means' behavior is determinant.

Answering these questions is beyond this article's scope; we plan to pursue
them actively in the future.\vspace{0.08in}

\begin{proof}
[Proof of Theorem \ref{thmpoly}]\emph{Proof of Point 1}. Since $Q\left(
x,y\right)  \leq Q\left(  0\right)  $ for all $x,y$, from Lemma
\ref{deewaimee}, we have
\[
\left\langle DY,-DL^{-1}Y\right\rangle \leq\frac{Q\left(  0\right)  }{2t}%
\int_{-\pi/2}^{\pi/2}\left\vert \sin\theta\right\vert d\theta~t~\mathbf{E}%
^{\prime}E_{b,\bar{b}}\left[  \frac{\exp H_{t}^{W}\left(  b\right)  }{u\left(
t\right)  }\frac{\exp H_{t}^{R_{\theta}W}\left(  \bar{b}\right)  }{R_{\theta
}u\left(  t\right)  }\right]  =Q\left(  0\right)  ,
\]
where we used the trivial facts that $E_{b}\left[  \exp H_{t}^{W}\left(
b\right)  \right]  =u\left(  t\right)  $ and $E_{b}\left[  \exp H_{t}%
^{R_{\theta}W}\left(  b\right)  \right]  =R_{\theta}u\left(  t\right)  $. The
upper bound result in Theorem \ref{thmlb}, applied to the random variable
$X=Y/\sqrt{Q\left(  0\right)  },$ now yields%
\[
\mathbf{P}\left[  Y>z\right]  =\mathbf{P}\left[  X>zQ\left(  0\right)
^{-1/2}\right]  \leq\left(  1+\frac{Q\left(  0\right)  }{z^{2}}\right)
\bar{\Phi}\left(  \frac{z}{Q\left(  0\right)  ^{1/2}}\right)
\]
and the upper bound statement (\ref{LD}).\vspace{0.2cm}

\emph{Proof of Points 2 and 3}. Now we note that, since all terms in the
integrals in Lemma \ref{deewaimee} are positive, our hypothesis that $Q\left(
x,y\right)  \geq Q_{m}>0$ for all $x,y$ implies%
\begin{align*}
\left\langle DX,-DL^{-1}X\right\rangle  &  \geq\frac{Q_{m}}{2t}\int_{-\pi
/2}^{\pi/2}\left\vert \sin\theta\right\vert d\theta~t~\mathbf{E}^{\prime
}E_{b,\bar{b}}\left[  \frac{\exp H_{t}^{W}\left(  b\right)  }{u\left(
t\right)  }\frac{\exp H_{t}^{R_{\theta}W}\left(  \bar{b}\right)  }{R_{\theta
}u\left(  t\right)  }\right] \\
&  =\frac{Q_{m}}{2}\int_{-\pi/2}^{\pi/2}\left\vert \sin\theta\right\vert
d\theta~\mathbf{E}^{\prime}\left[  \frac{E_{b}\left[  \exp H_{t}^{W}\left(
b\right)  \right]  }{u\left(  t\right)  }\frac{E_{b}\left[  \exp
H_{t}^{R_{\theta}W}\left(  \bar{b}\right)  \right]  }{R_{\theta}u\left(
t\right)  }\right]  =Q_{m}.
\end{align*}
Applying Point 1 in Corollary \ref{thmlbcor} to the random variable
$X=Y/\sqrt{Q_{m}}$, the lower bound of (\ref{LDvar2}) in Point 3 follows. The
upper bound (\ref{LDvar}) of Point 2 can be proved using the result (\ref{LD})
of Point 1, although one obtains a slightly larger constant than the one
announced. The constant $\left(  \pi/2\right)  ^{2}$ is obtained by using the
bound $\left\Vert DY\right\Vert ^{2}\leq Q\left(  0\right)  $ which follows
trivially from (\ref{overlap}), and then applying the classical result
$Var\left[  Y\right]  \leq\left(  \pi/2\right)  ^{2}\mathbf{E}\left[
\left\Vert DY\right\Vert ^{2}\right]  $, found for instance in \cite[Theorem
9.2.3]{U}.\vspace{0.2cm}

\emph{Proof of Point 4.} Since $Q\left(  0\right)  $ is finite and $Q_{m}$ is
positive, using $X=Y/\sqrt{Q_{m}}$ in Corollary \ref{corlblast}, we have that
$g\left(  x\right)  \geq1$ and $g\left(  x\right)  \leq Q\left(  0\right)
/Q_{m}$, so that we may use any value $c^{\prime}>0$ in the assumption of that
corollary, with thus $K=1/\left(  2c^{\prime}+1\right)  $ arbitrarily close to
$1$; the corollary's conclusion is the statement of Point 4. This finishes the
proof of the theorem.\vspace{0.3cm}
\end{proof}

\subsection{Robustness of the fluctuation exponent: a non-Gaussian
Hamiltonian}

The statements of Point 4 of Theorem \ref{thmpoly} show that if the random
environment's spatial covariance is bounded above and below by positive
constants, then the partition function's logarithm $\log u\left(  t\right)  $
is both sub-Gaussian and super-Gaussian, in terms of its tail behavior (tail
bounded respectively above and below by Gaussian tails). We now provide an
example of a polymer subject to a non-Gaussian Hamiltonian, based still on the
same random environment, whose logarithmic partition function may not be
sub-Gaussian, yet still has a fluctuation exponent equal to $1/2$. It is
legitimate to qualify the persistence of this value $1/2$ in a non-Gaussian
example as a type of robustness.

Let%
\[
X_{t}^{W}\left(  b\right)  :=\int_{0}^{t}W\left(  ds,b_{s}\right)  .
\]
With $F\left(  t,x\right)  =x+x\left\vert x\right\vert /t$, we define our new
Hamiltonian as%
\begin{equation}
H_{t}^{W}\left(  b\right)  :=F\left(  t,X_{t}^{W}\left(  b\right)  \right)
.\label{cochondelait}%
\end{equation}
Similarly to Lemma \ref{deewaimee}, using the Chain Rule for Malliavin
derivatives, we can prove that%
\begin{equation}
D_{s,\lambda}Y=\frac{1}{\sqrt{t}}\frac{1}{u\left(  t\right)  }E_{b}\left[
\sqrt{\hat{Q}\left(  \lambda\right)  }e^{i\lambda\cdot b_{s}}e^{H_{t}%
^{W}\left(  b\right)  }\left(  1+\frac{\left\vert X_{t}^{W}\left(  b\right)
\right\vert }{t}\right)  \right]  ,\label{Deedee}%
\end{equation}
and
\begin{align}
\left\langle DY,-DL^{-1}Y\right\rangle  &  =\frac{1}{2t}\int_{-\pi/2}^{\pi
/2}\left\vert \sin\theta\right\vert d\theta~\mathbf{E}^{\prime}E_{b,\bar{b}%
}\left[  \int_{0}^{t}ds~Q\left(  b_{s},\bar{b}_{s}\right)  \frac{\exp
H_{t}^{W}\left(  b\right)  }{u\left(  t\right)  }\frac{\exp H_{t}^{R_{\theta
}W}\left(  \bar{b}\right)  }{R_{\theta}u\left(  t\right)  }\right.
\nonumber\\
&  \left.  \left(  1+\frac{\left\vert X_{t}^{W}\left(  b\right)  \right\vert
}{t}\right)  \left(  1+\frac{\left\vert X_{t}^{R_{\theta}W}\left(  \bar
{b}\right)  \right\vert }{t}\right)  \right]  .\label{Deewaing}%
\end{align}

\begin{theorem}
\label{thmng}Consider $u\left(  t\right)  =E_{b}\left[  \exp H_{t}^{W}\left(
b\right)  \right]  $ where the new Hamiltonian $H_{t}^{W}$ is given in
(\ref{cochondelait}). The random environment $W$ is as it was defined in
Section \ref{RE}, and $Q\left(  0\right)  $ and $Q_{m}$ are given in Notation
\ref{NotaQ}. $K_{u}\simeq0.21367$ is defined in Point 1 of Corollary
\ref{thmlbcor}.

\begin{enumerate}
\item Assume $Q\left(  0\right)  <1/9$. Then $Var\left[  \log u\left(
t\right)  \right]  \leq2^{8}\left(  \pi/2\right)  ^{2}\mathbf{E}\left[
Q^{3}\left(  0\right)  \right]  t+o\left(  t\right)  $.

\item Assume $Q_{m}$ is positive. Then $Var\left[  \log u\left(  t\right)
\right]  \geq K_{u}\mathbf{E}_{\mathcal{P}}\left[  Q_{m}\right]  t$.
\end{enumerate}

If both assumptions of Points 1 and 2 hold, the fluctuation exponent of
Definition \ref{DefChi} is $\chi=1/2$, and the conclusion of Point 4 in
Theorem \ref{thmpoly} holds.
\end{theorem}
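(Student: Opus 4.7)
The plan is to treat the three assertions in turn, reusing as much of the machinery developed for Theorem \ref{thmpoly} as possible. Point 2 should be essentially an immediate transcription of Point 3 of that theorem: the integrand of formula (\ref{Deewaing}) is manifestly non-negative and the new chain-rule factors $(1+|X_t^W(b)|/t)$ and $(1+|X_t^{R_\theta W}(\bar b)|/t)$ are each $\geq 1$, so dropping them from below and bounding $Q(b_s,\bar b_s)\geq Q_m$ yields $\langle DY,-DL^{-1}Y\rangle\geq Q_m$ almost surely, exactly as in the linear case. Point 1 of Corollary \ref{thmlbcor} applied to $Y/\sqrt{Q_m}$ then finishes Point 2. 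For Point 1, by contrast, the chain-rule factors must be controlled in expectation rather than simply discarded. The first step would be to derive the non-Gaussian analog of (\ref{overlap}) by the Fourier/Plancherel manipulation that produced it in the Gaussian case, obtaining
\[
\|DY\|^2 = \frac{1}{t}\tilde E_{b,\bar b}\!\left[\int_0^t Q(b_s,\bar b_s)\,ds\cdot \bigl(1+|X_t^W(b)|/t\bigr)\bigl(1+|X_t^W(\bar b)|/t\bigr)\right].
\]
Bounding $Q\leq Q(0)$ and using that, conditional on $W$, the two replicas are independent under the polymer measure gives $\|DY\|^2\leq Q(0)\bigl(\tilde E_b[1+|X_t^W(b)|/t]\bigr)^2$; Jensen on the outer square and taking expectation in $W$ then yield $\mathbf{E}[\|DY\|^2]\leq \mathbf{E}[Q(0)]\cdot \mathbf{E}\tilde E_b[(1+|X_t^W(b)|/t)^2]$. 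Once the right-hand side is shown to be $O(\mathbf{E}[Q^3(0)])$, the Üstünel inequality $Var[Y]\leq(\pi/2)^2\mathbf{E}[\|DY\|^2]$ of \cite[Theorem 9.2.3]{U} will close Point 1.

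The main obstacle, and the sole place where the restrictive hypothesis $Q(0)<1/9$ enters, is exactly the estimate of $\mathbf{E}\tilde E_b[(1+|X_t^W(b)|/t)^2]$. My plan is to condition first on the path $b$, so that $X:=X_t^W(b)$ becomes a centered Gaussian with variance $\sigma^2(b)=\int_0^t Q(b_s,b_s)\,ds\leq tQ(0)$, and then to rewrite $\tilde E_b$ as the explicit ratio $E_b[(1+|X|/t)^2 e^{X+X|X|/t}]/u(t)$. On the positive half-line, the Gaussian integrand in the numerator reduces to $\exp\{x-(1/(2\sigma^2)-1/t)x^2\}$, which is integrable exactly when $Q(0)<1/2$; the sharper bound $Q(0)<1/9$ is what lets a Hölder-type splitting simultaneously control the denominator $u(t)$ from below, for instance via the Jensen bound $u(t)\geq \exp E_b[H_t^W(b)]$ combined with explicit Gaussian moments, and still leaves enough room for two extra factors of $Q(0)$ to emerge from the Gaussian integrations in numerator and denominator. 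This computational step will dominate the proof effort; everything else is essentially bookkeeping.

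With Points 1 and 2 in hand, $Var[\log u(t)]$ is trapped between positive multiples of $t$, so $\chi=1/2$ follows directly from Definition \ref{DefChi}. The super-Gaussian lower tail conclusion of Point 4 of Theorem \ref{thmpoly} transfers verbatim, because the almost-sure inequality $G\geq Q_m$ secured in Point 2 is exactly what drove (\ref{LD2}) in the linear case: applying the Gaussian lower-tail result (via Corollary \ref{thmlbcor} Point 3, or equivalently Point 3 of Theorem \ref{thm12}) to the rescaled variable $Y/\sqrt{Q_m}$ again produces the same lower bound on the tail, completing the combined conclusion.
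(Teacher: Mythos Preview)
Your treatment of Point 2 is correct and identical to the paper's: the extra factors $(1+|X_t^W|/t)$ in (\ref{Deewaing}) are each $\geq 1$, so $G\geq Q_m$ holds exactly as in Theorem \ref{thmpoly}, and Corollary \ref{thmlbcor} Point 1 applied to $Y/\sqrt{Q_m}$ gives the variance lower bound. Your overall strategy for Point 1---bound $\|DY\|^2$ using $Q\leq Q(0)$ and the replica factorization, then invoke the \"Ust\"unel inequality $Var[Y]\leq(\pi/2)^2\mathbf{E}[\|DY\|^2]$---also matches the paper.

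The gap is in the ``computational step''. First, a conceptual slip: $\tilde E_b$ and $E_b$ are integrals over the path $b$ for \emph{fixed} $W$, so the law of $X_t^W(b)$ under $E_b$ is not Gaussian; it is only Gaussian in $W$ for fixed $b$. Your plan to ``condition on $b$'' and read off a Gaussian integrand inside $E_b[\cdots]$ conflates the two integrations. What you presumably intend is to take $\mathbf{E}$, move it past $E_b$ by Fubini, and only then compute a Gaussian integral; but this forces you to separate the random denominator $u(t)$ first, and that is where the real problem lies. Any H\"older/Cauchy--Schwarz split of $\mathbf{E}\bigl[E_b[(1+|X|/t)^2 e^H]\,/\,u(t)\bigr]$, or the Jensen lower bound $u(t)\geq\exp E_b[H_t^W(b)]$ that you suggest, produces pieces each of order $e^{cQ(0)t}$ (for instance $\mathbf{E}[u(t)^{-2}]\leq E_b\mathbf{E}[e^{-2H_t^W(b)}]\sim e^{cQ(0)t}$), with no mechanism to cancel the exponential growth. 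The paper avoids this via a logarithm trick you do not mention: Jensen under the polymer measure gives
\[
\tilde E_b\!\left[\frac{|X_t^W(b)|}{t}\right]=\frac{1}{t}\,\tilde E_b\!\left[\log e^{|X_t^W(b)|}\right]\leq\frac{1}{t}\log\frac{E_b\bigl[e^{H_t^W(b)+|X_t^W(b)|}\bigr]}{u(t)}.
\]
Only \emph{after} this does one split numerator from denominator by Cauchy--Schwarz; the ratio is still $e^{cQ(0)t}$, but the outer $t^{-1}\log(\cdot)$ converts it to $cQ(0)=O(1)$, which is precisely what yields the bounded constant $2^8Q^2(0)$ (the condition $Q(0)<1/9$ ensures $\mathbf{E}[e^{4|X|^2/t}]<\infty$ after the split). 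Without this step your route does not close.

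Two smaller remarks. The factoring $\mathbf{E}[\|DY\|^2]\leq\mathbf{E}[Q(0)]\cdot\mathbf{E}\tilde E_b[\cdots]$ is not valid when $Q$ is random under $\mathcal{P}$; condition on $Q$ first. And for the final tail assertion, Corollary \ref{thmlbcor} Point 3 yields only a $\limsup$; the statement (\ref{LD2}) in Point 4 of Theorem \ref{thmpoly} comes from Corollary \ref{corlblast}, which also needs $g(x)\leq c'x^2$ for large $x$---not automatic here, though the paper is equally terse on this point.
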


We suspect that the logarithmic partition function $\log u\left(  t\right)  $
given by the non-Gaussian Hamiltonian in (\ref{cochondelait}) is eminently
non-Gaussian itself; in fact, the form of its dervative in (\ref{Deedee}),
with the additional factors of the form $\left(  1+X\left(  b\right)  \right)
/t$, can presumably be compared with $Y$. We conjecture, although we are
unable to prove it, that the corresponding $g\left(  y\right)  $ grows
linearly in $y$. This would show, via Corollary \ref{corA} Point 3, that $\log
u\left(  t\right)  $ has exponential tails. Other examples of non-Gaussian
Hamiltonians can be given, using the formulation (\ref{cochondelait}) with
other functions $F$, such as $F\left(  t,x\right)  =x+x\left\vert x\right\vert
^{p}/t^{(1+p)/2}$ for $p>0$. It should be noted, however, that in our Gaussian
environment, any value $p>1$ results in a partition function $u\left(
t\right)  $ with infinite first moment, in which case the arguments we have
given above for proving that $\chi=1/2$ will not work. This does not mean that
the logarithmic partition function cannot be analyzed using finer arguments;
it can presumably be proved to be non-Gaussian with heavier-than-exponential
tails when $p>1$.\bigskip

\begin{proof}
[Proof of Theorem \ref{thmng}]Since the additional terms in (\ref{Deewaing}),
compared to Lemma \ref{deewaimee}, are factors greater than $1$, the
conclusion of Point 2 follows immediately using the proof of Points 3 and 4 of
Theorem \ref{thmpoly}.

To prove that Point 1 holds, we will use the again the classical fact
$Var\left[  Y\right]  \leq\left(  \pi/2\right)  ^{2}\mathbf{E}\left[
\left\Vert DY\right\Vert ^{2}\right]  $. Here from (\ref{Deedee}) we have
immediately%
\[
\left\Vert DY\right\Vert ^{2}\leq Q\left(  0\right)  \left(  1+E_{b}\left[
\frac{e^{H_{t}^{W}\left(  b\right)  }}{u\left(  t\right)  }\frac{\left\vert
X_{t}^{W}\left(  b\right)  \right\vert }{t}\right]  \right)  ^{2}.
\]
Therefore, to get an upper bound on the variance of $Y$ uniformly in $t$ we
only need to show that the quantity%
\[
B:=\mathbf{E}\left[  \left(  E_{b}\left[  \frac{e^{H_{t}^{W}\left(  b\right)
}}{u\left(  t\right)  }\frac{\left\vert X_{t}^{W}\left(  b\right)  \right\vert
}{t}\right]  \right)  ^{2}\right]
\]
is bounded in $t$. We see that, using Jensen's inequality w.r.t. the polymer
measure, and then w.r.t. the random medium's expectation,%
\begin{align*}
B  &  =\frac{1}{t^{2}}\mathbf{E}\left[  \left(  E_{b}\left[  \frac
{e^{H_{t}^{W}\left(  b\right)  }}{u\left(  t\right)  }\log e^{\left\vert
X_{t}^{W}\left(  b\right)  \right\vert }\right]  \right)  ^{2}\right] \\
&  \leq\frac{1}{t^{2}}\mathbf{E}\left[  \left(  \log E_{b}\left[
\frac{e^{H_{t}^{W}\left(  b\right)  +\left\vert X_{t}^{W}\left(  b\right)
\right\vert }}{u\left(  t\right)  }\right]  \right)  ^{2}\right] \\
&  \leq\frac{1}{t^{2}}\log^{2}\left(  1+\mathbf{E}\left[  E_{b}\left[
\frac{e^{H_{t}^{W}\left(  b\right)  +\left\vert X_{t}^{W}\left(  b\right)
\right\vert }}{u\left(  t\right)  }\right]  \right]  \right)  .
\end{align*}
Now we evaluate%
\begin{align*}
\mathbf{E}\left[  E_{b}\left[  \frac{e^{H_{t}^{W}\left(  b\right)  +\left\vert
X_{t}^{W}\left(  b\right)  \right\vert }}{u\left(  t\right)  }\right]
\right]   &  =E_{b}\mathbf{E}\left[  \frac{e^{X_{t}^{W}\left(  b\right)
+\left\vert X_{t}^{W}\left(  b\right)  \right\vert ^{2}/t+\left\vert X_{t}%
^{W}\left(  b\right)  \right\vert }}{u\left(  t\right)  }\right] \\
&  \leq\mathbf{E}^{1/2}\left[  u\left(  t\right)  ^{-2}\right]  E_{b}%
\mathbf{E}^{1/2}\left[  e^{4\left\vert X_{t}^{W}\left(  b\right)  \right\vert
+2\left\vert X_{t}^{W}\left(  b\right)  \right\vert ^{2}/t}\right] \\
&  \leq\mathbf{E}^{1/2}\left[  E_{b}\left[  e^{-2H_{t}^{W}\left(  b\right)
}\right]  \right]  E_{b}\mathbf{E}^{1/2}\left[  e^{4\left\vert X_{t}%
^{W}\left(  b\right)  \right\vert +2\left\vert X_{t}^{W}\left(  b\right)
\right\vert ^{2}/t}\right]
\end{align*}
The first term in the above product is actually less than the second. For the
second, we note that for any fixed $b$, the random variable $X_{t}^{W}\left(
b\right)  $ is Gaussian centered, with a variance bounded above by $Q\left(
0\right)  t\leq t/9$. Therefore we have that $\mathbf{E}\left[  e^{4\left\vert
X_{t}^{W}\left(  b\right)  \right\vert ^{2}/t}\right]  $ is bounded by the
finite universal constant $\mathbf{E}\left[  \exp\left(  4Z^{2}/9\right)
\right]  $. This proves, via another application of Schwartz's inequality,
that for some universal constant $K_{u}^{\prime}$,%
\begin{align*}
B  &  \leq\frac{1}{t^{2}}\log^{2}\left(  1+K_{u}^{\prime}\exp\left(
16Q\left(  0\right)  t\right)  \right) \\
&  \leq\frac{1+\log^{2}K_{u}^{\prime}}{t^{2}}+16^{2}Q^{2}\left(  0\right)
=2^{8}Q^{2}\left(  0\right)  +o\left(  t\right)  ,
\end{align*}
where $o\left(  t\right)  $ is non-random, proving Point 1, and the theorem.
\end{proof}

\section{Appendix}

To prove Lemma \ref{lemkey}, we begin with an intermediate result in the $n$th
Wiener chaos.

\begin{lemma}
\label{lemsko}Let $n\in\mathbf{N}$ and $f_{n}\in H^{n}$ be a symmetric
function. Let $Y\in\mathbf{D}^{1,2}$. Then%
\begin{align*}
\mathbf{E}\left[  I_{n}\left(  f_{n}\right)  Y\right]   &  =\frac{1}%
{n}\mathbf{E}\left[  \left\langle D_{\cdot}\left(  I_{n}\left(  f_{n}\right)
\right)  ;D_{\cdot}Y\right\rangle \right] \\
&  =\mathbf{E}\left[  \left\langle I_{n-1}\left(  f_{n}\left(  \star
,\cdot\right)  \right)  ;D_{\cdot}Y\right\rangle \right]  ,
\end{align*}
where we used the notation $I_{n-1}\left(  f_{n}\left(  \star,\cdot\right)
\right)  $ to denote the function $r\mapsto I_{n-1}\left(  f_{n}\left(
\star,r\right)  \right)  $ where $I_{n-1}$ operates on the $n-1$ variables
\textquotedblleft$\star$\textquotedblright\ of $f_{n}\left(  \star,r\right)  $.
\end{lemma}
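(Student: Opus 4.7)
The plan is to reduce everything to orthogonality of Wiener chaoses. The second equality of the lemma is immediate from formula (\ref{DInfn}), namely $D_r I_n(f_n) = n I_{n-1}(f_n(r,\cdot))$, so the real work is proving the first equality
\[
\mathbf{E}[I_n(f_n) Y] = \frac{1}{n}\,\mathbf{E}\!\left[\langle D(I_n(f_n)), DY\rangle\right].
\]

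First I would expand $Y$ in its Wiener chaos decomposition provided by Proposition \ref{orthog}: write $Y = \sum_{m=0}^\infty I_m(g_m)$ with symmetric kernels $g_m \in H^m$ and $\sum_m m!\|g_m\|_{H^m}^2 = \mathbf{E}[Y^2] < \infty$. The hypothesis $Y \in \mathbf{D}^{1,2}$ upgrades this to $\sum_m m\cdot m!\|g_m\|_{H^m}^2 < \infty$ by (\ref{D12}), which will justify termwise manipulations of $DY$. Applying Proposition \ref{orthog} directly to the left-hand side collapses the series to a single term:
\[
\mathbf{E}[I_n(f_n)\,Y] = \sum_{m=0}^\infty \mathbf{E}[I_n(f_n) I_m(g_m)] = n!\,\langle f_n, g_n\rangle_{H^n}.
\]

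Next I would compute the right-hand side using (\ref{DInfn}) twice: $D_r I_n(f_n) = n I_{n-1}(f_n(r,\cdot))$ and, termwise, $D_r Y = \sum_{m\ge 1} m I_{m-1}(g_m(r,\cdot))$. Taking the $H$-inner product, integrating in $r \in T$, exchanging expectation and sum (legitimized by the $\mathbf{D}^{1,2}$ bound above together with Cauchy--Schwarz), and applying the orthogonality formula of Proposition \ref{orthog} at level $n-1$ gives
\[
\mathbf{E}\!\left[\langle D I_n(f_n), DY\rangle\right] = n \int_T \mathbf{E}\!\left[ I_{n-1}(f_n(r,\cdot)) \cdot n\, I_{n-1}(g_n(r,\cdot)) \right] d\mu(r) = n^2 (n-1)! \int_T \langle f_n(r,\cdot), g_n(r,\cdot)\rangle_{H^{n-1}}\, d\mu(r).
\]
By Fubini and the definition of the norm on $H^n$, the remaining integral equals $\langle f_n, g_n\rangle_{H^n}$, so the right-hand side is $n \cdot n!\,\langle f_n, g_n\rangle_{H^n}$. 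Dividing by $n$ matches the left-hand side exactly, completing the proof.

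The main (and really the only) obstacle is the interchange of the expectation with the infinite sum coming from $DY$; this is handled by the square-integrability built into the definition of $\mathbf{D}^{1,2}$, which makes the partial sums converge in $L^2(\Omega \times T)$ and allows Cauchy--Schwarz against the deterministic, single-chaos element $D I_n(f_n) \in L^2(\Omega\times T)$. Everything else reduces to the Wiener--It\^o isometry at levels $n$ and $n-1$.
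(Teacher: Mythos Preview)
Your proof is correct and follows essentially the same route as the paper's direct proof: both reduce to the $n$th chaos component of $Y$ via orthogonality and then compare the Wiener--It\^o isometry at levels $n$ and $n-1$ to obtain $n!\,\langle f_n,g_n\rangle_{H^n}$ on each side. The only cosmetic difference is that the paper invokes orthogonality to assume $Y=I_n(g_n)$ from the outset, whereas you carry the full chaos expansion and let orthogonality collapse it; you are also slightly more explicit about the $L^2$ justification for the termwise manipulation of $DY$.
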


\begin{proof}
This is an immediate consequence of formula (\ref{DInfn}) and the famous
relation $\delta D=-L$ (where $\delta$ is the divergence operator (Skorohod
integral), adjoint of $D$, see \cite[Proposition 1.4.3]{Nbook}).

Here, however, we present a direct proof. Note that, because of the Wiener
chaos expansion of $Y$ in Proposition \ref{orthog}, and the fact that all
chaos terms of different orders are orthogonal, without loss of generality, we
can assume $Y=I_{n}\left(  g_{n}\right)  $ for some symmetric $g_{n}\in H^{n}%
$; then, using the formula for the covariance of two $n$th-chaos r.v.'s in
Proposition \ref{orthog}, we have%
\begin{align*}
\mathbf{E}\left[  \left\langle I_{n-1}\left(  f_{n}\left(  \star,\cdot\right)
\right)  ;D_{\cdot}Y\right\rangle \right]   &  =\mathbf{E}\left[  \left\langle
I_{n-1}\left(  f_{n}\left(  \star,\cdot\right)  \right)  ;nI_{n-1}\left(
g_{n}\left(  \star,\cdot\right)  \right)  \right\rangle \right] \\
&  =n\int_{T}\mathbf{E}\left[  I_{n-1}\left(  f_{n}\left(  \star,r\right)
\right)  I_{n-1}\left(  g_{n}\left(  \star,r\right)  \right)  \right]
\mu\left(  dr\right) \\
&  =n\int_{T}\left(  n-1\right)  !\left\langle f_{n}\left(  \star,r\right)
,g_{n}\left(  \star,r\right)  \right\rangle _{L^{2}\left(  T^{n-1}%
,\mu^{\otimes n-1}\right)  }\mu\left(  dr\right) \\
&  =n!\left\langle f_{n};g_{n}\right\rangle _{L^{2}\left(  T^{n},\mu^{\otimes
n}\right)  }=\mathbf{E}\left[  I_{n}\left(  f_{n}\right)  Y\right]  .
\end{align*}
which, together with formula (\ref{DInfn}), proves the lemma.\bigskip
\end{proof}

\begin{proof}
[Proof of Lemma \ref{lemkey}]Since $X\in\mathbf{D}^{1,2}$ and is centered, it
has a Wiener chaos expansion $X=\sum_{n=1}^{\infty}I_{n}\left(  f_{n}\right)
$. We calculate $\mathbf{E}\left[  Xh\left(  X\right)  \right]  $ via this
expansion and the Malliavin calculus, invoking Remark \ref{remgenchain} and
using Lemma \ref{lemsko}:%
\begin{align*}
\mathbf{E}\left[  Xh\left(  X\right)  \right]   &  =\sum_{n=1}^{\infty
}\mathbf{E}\left[  I_{n}\left(  f_{n}\right)  h\left(  X\right)  \right]  \\
&  =\sum_{n=1}^{\infty}\frac{1}{n}\mathbf{E}\left[  \int_{T}D_{r}I_{n}\left(
f_{n}\right)  \ D_{r}h\left(  X\right)  \ \mu\left(  dr\right)  \right]  \\
&  =\mathbf{E}\left[  h^{\prime}\left(  X\right)  \int_{T}D_{r}\left(
\sum_{n=1}^{\infty}\frac{1}{n}I_{n}\left(  f_{n}\right)  \right)
\ D_{r}X\ \mu\left(  dr\right)  \right]
\end{align*}
which by the definition of $-L$ is precisely the statement (\ref{easykey}%
).\bigskip
\end{proof}

\begin{proof}
[Proof of Lemma \ref{lemDLX}]The proof goes exactly as that of Lemma 1.5.2 in
\cite{Nbook}, with only the following change: the point there was to represent
the Malliavin derivative of the operator $\left(  -C\right)  ^{-1}$ which
changes $I_{n}\left(  f_{n}\right)  $ into $n^{-1/2}I_{n}\left(  f_{n}\right)
$, whereas here $-L^{-1}$ changes $I_{n}\left(  f_{n}\right)  $ into
$n^{-1}I_{n}\left(  f_{n}\right)  $; in \cite[Lemma 1.5.2]{Nbook}, the
function $\varphi$ was introduced with the property that $\int_{-\pi/2}%
^{\pi/2}\sin\left(  \mathbf{\theta}\right)  \cos^{n}\left(  \theta\right)
\varphi\left(  \theta\right)  d\theta=\left(  n+1\right)  ^{-1/2}$; here we
therefore only need to replace that $\varphi$ by a function $\phi$ such that
$\int_{-\pi/2}^{\pi/2}\sin\left(  \mathbf{\theta}\right)  \cos^{n}\left(
\theta\right)  \phi\left(  \theta\right)  d\theta=\left(  n+1\right)  ^{-1}$.
It is clear that this function $\phi$ is $\phi\left(  \theta\right)
=2^{-1}sgn\left(  \theta\right)  $. Our lemma follows by the proof of
\cite[Lemma 1.5.2]{Nbook}.
\end{proof}

For completeness, we finish with a short, self-contained proof of the upper
bound in Theorem \ref{thm12}, which is equivalent to Proposition
\ref{fundu}.\vspace{0.3cm}

\begin{proof}
[Proof of Proposition \ref{fundu}]Assume $X$ is centered and $W$ is the
standard Wiener space. By the Clark-Ocone representation formula
(\ref{clarkocone}), we can define a continuous square-integrable martingale
$M$ with $M\left(  1\right)  =X$, via the formula $M\left(  t\right)
:=\int_{0}^{t}\mathbf{E}\left[  D_{s}X|\mathcal{F}_{s}\right]  dW\left(
s\right)  $. The quadratic variation of $M$ is equal to $\left\langle
M\right\rangle _{t}=\int_{0}^{t}\left\vert \mathbf{E}\left[  D_{s}%
X|\mathcal{F}_{s}\right]  \right\vert ^{2}$; therefore, by hypothesis,
$\left\langle M\right\rangle _{t}\leq t$. Using the Doleans-Dade exponential
martingale $\mathcal{E}\left(  \lambda M\right)  $ based on $\lambda M$,
defined by $\mathcal{E}\left(  \lambda M\right)  _{t}=\exp\left(  \lambda
M_{t}-\frac{\lambda^{2}}{2}\left\langle M\right\rangle _{t}\right)  $ we now
have
\[
\mathbf{E}\left[  \exp\lambda X\right]  =\mathbf{E}\left[  \mathcal{E}\left(
\lambda M\right)  _{1}\exp\left(  \frac{\lambda^{2}}{2}\left\langle
M\right\rangle _{1}\right)  \right]  \leq\mathbf{E}\left[  \mathcal{E}\left(
\lambda M\right)  _{1}\right]  e^{\lambda^{2}/2}=e^{\lambda^{2}/2}.
\]
The proposition follows using a standard optimization calculation and
Chebyshev's inequality. Theorem 9.1.1 in \cite{U} can be invoked to prove the
same estimate in the case of a general isonormal Gaussian process $W$.
\end{proof}

\end{document}